\documentclass[11pt]{article}

\usepackage[english]{babel}
\usepackage[utf8]{inputenc}
\usepackage{geometry}
\usepackage{amsmath, amssymb, amsthm}
\usepackage{graphicx}
\usepackage[colorlinks=true, allcolors=blue]{hyperref}
\usepackage{enumitem}
\usepackage{mathtools}
\usepackage[capitalise]{cleveref}
\usepackage{fullpage}

\usepackage{tikz, enumitem}
\usetikzlibrary{shapes.geometric, calc}

\usetikzlibrary{shapes.geometric, calc}

\newtheorem{theorem}{Theorem}
\newtheorem{lemma}[theorem]{Lemma}
\newtheorem{fact}[theorem]{Fact}

\newtheorem{corollary}[theorem]{Corollary}

\newtheorem{problem}[theorem]{Problem}
\newtheorem{proposition}[theorem]{Proposition}

\theoremstyle{definition}

\theoremstyle{remark}
\newtheorem{remark}[theorem]{Remark}

\renewcommand{\epsilon}{\varepsilon}
\newcommand{\eps}{\epsilon}

\newcommand{\Poiss}{\operatorname{Poiss}}
\newcommand{\TV}{\mathrm{TV}}
\newcommand{\cZ}{\mathcal{Z}}

\newcommand{\cY}{\mathcal{Y}}
\newcommand{\EE}{\mathbb{E}}
\newcommand{\E}{\mathbb{E}}

\newcommand{\cF}{\mathcal{F}}
\newcommand{\cP}{\mathcal{P}}
\newcommand{\cJ}{\mathcal{J}}

\newcommand{\PP}{\mathbb{P}}

\newcommand{\cG}{\mathcal{G}}

\newcommand{\cA}{\mathcal{A}}
\newcommand{\cC}{\mathcal{C}}
\newcommand{\cD}{\mathcal{D}}
\newcommand{\cE}{\mathcal{E}}

\newcommand{\cB}{\mathcal{B}}

\newcommand{\cI}{\mathcal{I}}

\title{The largest $K_r$-free set of vertices in a random graph}
\author{
Tom Bohman\thanks{Department of Mathematical Science, Carnegie Mellon University. Email: tbohman@math.cmu.edu. Research partially supported by NSF Award DMS-2246907} \and  
Marcus Michelen\thanks{Department of Mathematics, Northwestern University. Email: michelen@northwestern.edu. Research partially supported by NSF awards DMS-2336788 and DMS-2246624} \and  
Dhruv Mubayi\thanks{Department of Mathematics, Statistics and Computer Science, University of Illinois, Chicago, IL 60607. Email: mubayi@uic.edu. Research partially supported by NSF Award DMS-2153576.}
}
\date{}

\begin{document}

\maketitle

\begin{abstract}
For $r \ge 2$ and a graph $G$, let $\alpha_{{r}}(G)$ be the maximum number of vertices in a $K_r$-free subgraph  of $G$. We investigate the value $\alpha_{r}(G)$ when $G$ is the random graph $G \sim G_{n, 1/2}$ and discover the following 
phenomenon: with high probability, $\alpha_r(G)$ lies in an interval of  constant length that varies in a non-monotonic fashion from $1$ to $\lfloor r/2\rfloor+1$ depending on the value of $n$.   The special case $r=2$ corresponds to the independence number of random graphs which is well-known to have two-point concentration; our results therefore extend and generalize this basic fact in random graph theory, showing more complicated behavior when $r>2$. We also prove similar results where $K_r$ is replaced by any color critical graph like $C_5$.
\end{abstract}

\section{Introduction}

Consider the binomial random graph $G_{n,1/2}$ and let $\alpha(G_{n,1/2})$ be its \emph{independence number}, i.e.\ the size of the largest independent set of $G$.  In the 1970's, classical works of Bollob\'as and Erd\H{o}s \cite{BE} and Matula \cite{Matula}  independently showed that $\alpha(G_{n,1/2})$ is concentrated on at most two values meaning that there is a deterministic function $f(n)$ so that $$\lim_{n \to \infty}\PP(\alpha(G_{n,1/2}) \in \{f(n),f(n)+1\}) = 1\,.$$
In fact, the works of Bollob\'as-Erd\H{o}s and Matula \cite{BE, Matula} imply that for a typical integer $n$, $\alpha(G_{n,1/2})$ is concentrated on a single value, meaning that for most $n$ we have $\alpha(G_{n,1/2}) = f(n)$ with high probability. Similar results are also known for $G \sim G_{n, p}$ as long as $p$ is not sufficiently small in terms of $n$~\cite{F, DM, BH, BH2}.  One may interpret an independent set as simply a subset of $G_{n,1/2}$ that avoids the presence of an edge, i.e.\ a clique on two vertices.  With this in mind, for a graph $G$, define $\alpha_{r}(G)$ to be the maximum number of vertices in an induced subgraph of $G$ that does not contain a copy of $K_r$, the clique on $r$ vertices.  We note in passing that instead of vertices one can also consider the maximum number of edges in a subgraph of $G_{n,p}$ that contains no copy of $K_{r}$; this 
problem, posed by Babai-Simonovits-Spencer~\cite{BSS} in 1990, has a long and storied history. Some of the  influential results here are due to Conlon-Gowers~\cite{CG}, Schacht~\cite{Schacht}, and DeMarco-Kahn~\cite{DK}.  In analogy with the independence number, we study the problem of concentration of the random quantity $\alpha_{r}(G_{n,1/2})$.  In particular, does two-point concentration still hold for $\alpha_r(G_{n,1/2})$?

Perhaps surprisingly, one consequence of our work is that two-point concentration holds for $r = 2,3$ and fails for $r \geq 4$.  We show that for each fixed\footnote{Our main result  \cref{thm:poisson-version} will in fact apply for $r =  o(\log \log n/\log\log\log n)$.} $r \geq 2$ we have that $\alpha_r(G_{n,1/2})$ lies in an interval of constant length that varies in a non-monotonic fashion from $1$ to $\lfloor r/2\rfloor + 1$ depending on the specific value of $n$. This phenomenon is closely tied to the fact that as $n$ varies, the number of independent sets of size $\alpha(G_{n,1/2})$ can be as small as $O(1)$ or as large as $n^{1 + o(1)}$ depending on $n$.  This non-monotonicity is also what drives the changing exponent of the concentration window for the chromatic number of $G_{n,1/2}$ in the work of Heckel \cite{heckel2021non} (see also Heckel-Riordan \cite{heckel2023does} for a precise conjecture).

As is the case with the independence number, for a typical value of $n$, $\alpha_r(G_{n,1/2})$ is concentrated on a single value; when not concentrated on a single value, the size of the interval can take on multiple values for a given $r$ that depend on divisibility properties  of $r$. 
For instance, in the case of, say, $r = 1001$, there will be values of $n$ for which $\alpha_r(G_{n,1/2})$ is concentrated on exactly $2$ values, some where it is concentrated on exactly $501$ values, some where it is concentrated on exactly $84$ values.  In fact, we will show that in the case of $r = 1001$, as $n$ varies $\alpha_r(G_{n,1/2})$ is concentrated on intervals of length $\{1, 2, 3, 4, 5, 6, 8, 11, 12, 15, 18, 25, 35, 51, 84, 168, 501\}$.  The pattern of what intervals of concentration occurs is non-monotone, with always going to $1$ before a larger value.  In particular, we first get $1$ point concentration, then $2$ point concentration; it alternates between $1$-point and $2$-point concentration $34$ times before seeing $3$-point.  Further, the interval can shrink even aside from going back to $1$ point.  In the case of $r =1001$ we have a stretch where we see $3$-point then $1$-point, $2$-point, $1$-point, $3$-point, $1$-point, $2$-point. In general the pattern is complicated, but is fully described by \cref{thm:clique}.

Our most general main theorem is \cref{thm:poisson-version} which describes a criterion for witnessing both upper and lower bounds of $\alpha_r(G_{n,1/2})$ along with a statement identifying Poissonian behavior for these witnesses; we use \cref{thm:poisson-version} to deduce a complete description of the intervals of concentration in \cref{thm:clique}.  Before jumping in to describe this general case, we describe the story for the case of $r = 3$, i.e.\ the largest triangle-free subgraph of $G_{n,1/2}$.  We describe the case of triangles along with a heuristic description of what happens for larger cliques in \cref{sec:triangles}, state our more general theorem in \cref{sec:Poissonian} and describe the intervals of concentration in \cref{sec:intervals}.  Finally, we describe some slightly coarser results for when avoiding \emph{color critical} graphs other than $K_r$ in \cref{sec:other-properties}. 

\subsection{The case of triangles and a heuristic description for larger cliques}\label{sec:triangles}

The expected number of triangle-free subgraphs of size $k$ in $G_{n,1/2}$ is precisely given by $N_3(k) \binom{n}{k}2^{-\binom{k}{2}}$ where $N_3(k)$ is the number of triangle-free graphs on vertex set $[k]$.  A classical theorem of Erd\H{o}s-Kleitman-Rothschild \cite{EKR} states that almost all triangle-free graphs are bipartite graphs.  With this in mind, one might hope that rather than searching for a large triangle-free graph in $G_{n,1/2}$, we might be able to simply search for a large bipartite graph.  Interestingly, it turns out that this is not precisely the case.

Suppose we are in the case where for some value of $n$, the independence number is $k$ with high probability.  As $n$ increases, the independence number will eventually be $k+1$ with high probability.  As this transition occurs, before seeing genuine independent sets of size $k+1$, we will begin to see some number of sets of $k+1$ containing only $1$ edge.  We call such an edge a \emph{defect}.  If one considers an independent set of size $k$ and a $(k+1)$-set with a defect, it will not be too common that the union of these two sets induces a triangle free graph---indeed one can compute that the probability a given $k$-set and $(k+1)$-set with a defect combine to give a triangle-free graph is $(3/4)^{-k}$---but despite this probability being so small, it turns out that by the time there is a $(k+1)$-set with a defect there are enough independent sets of size $k$ that in fact it is likely that a triangle-free graph of size $2k + 1$ occurs.  

In particular, if we define $Y_k$ to be the number of independent sets of size $k$ and $Z_{k,1}$ to be the number of $k$-sets of vertices that contain exactly $1$ edge, then \cref{thm:poisson-version} implies that \begin{gather*}
    \PP(\alpha_3(G_{n,1/2}) \geq 2k+1)  = \PP(Z_{k+1,1} \geq 1) + o(1) \\
    \PP(\alpha_3(G_{n,1/2}) \geq 2k)  = \PP(Y_{k} \geq 2) + o(1)
\end{gather*}
for each $k$ chosen so that $n^{-O(1)} \leq \E[Y_k] \leq n^{O(1)}.$  Further, each of $Z_{k,1}$ and $Y_k$ behave like Poisson random variables provided their mean is, say, no more than $n^{1/4}.$  To describe the intervals of concentration, for a small parameter $\eps = o(1)$ we may define \begin{gather*}
    a_k = \min\{ n : \E[Y_k] \geq \eps^{-1} \}\\
    b_{k,1} = \min\left\{ n : \EE  [Z_{k+1, 1}]\ge \epsilon \right\}\,, \qquad 
c_{k,1}  = \min\left\{ n : \EE  [Z_{k+1, 1}] \ge \epsilon^{-1} \right\},\\
 b_{k,2} = \min\left\{ n : \EE  [Y_{k+1}]\ge \epsilon \right\}\,, \qquad 
c_{k,2}  = \min\left\{ n : \EE  [Y_{k+1}] \ge \epsilon^{-1} \right\}.
\end{gather*}

Then $\alpha_3(G_{n,1/2}) \in I_n$ with high probability where $I_n$ is the interval \begin{equation*}
    I_n = \begin{cases}
        \{2k\} &\text{if } a_k \leq n < b_{k,1} \\
        \{2k, 2k + 1\} &\text{if }b_{k,1} \leq n < c_{k,1} \\
        \{2k+1\} & \text{if }c_{k,1}\leq n < b_{k,2} \\
        \{2k+1,2k+2\} & \text{if }b_{k,2}\leq n < c_{k,2}  = a_{k+1}.
    \end{cases} 
\end{equation*}

For larger $r$, we recall that classical works of Erd\H{o}s-Kleitman-Rothschild \cite{EKR} and  Kolaitis-Pr\"omel-Rothschild~\cite{KPR} show that almost every $K_{r+1}$-free graph is $r$-partite.  
As $n$ varies, we will see that for most values of $n$ the subgraphs that realize $\alpha_{r+1}(G_{n,1/2})$ are in fact $r$-partite graphs.  As $n$ increases and we transition from having independence number $k$ to independence number $k+1$, there will be vertex sets of size $k+1$ that have a small number of edges.  As in the case of triangles, we refer to these edges as \emph{defects} in potential independent sets. It turns out that we can incorporate these $(k+1)$-sets with few defects into $K_{r+1}$-free subgraphs. We show
that this can be done by ``covering'' each individual defect with an independent set of size $k$, where a $k$-set $A$ \emph{covers} a
defect $e$ if there is no copy of $K_3$ that consists of $e$ and a vertex in $A$. The point of this definition is that any copy of $K_{r+1}$ that contains a defect from a $(k+1)$-set cannot have any vertex in a $k$-set that covers the defect; this allows us to include both these sets in a $K_{r+1}$-free subgraph. It turns out that for $n$ in this regime we can form a maximum $K_{r+1}$-free subgraph of $G_{n,1/2}$ by identifying disjoint sets $A_1, \dots, A_r$ where each set $A_i$ is either an independent set of size $k$ or a set of size $k+1$ with a small number of defects with the property that each defect is covered by an independent set of size $k$ in the collection.  It will also be the case that each $k$-set covers only one defect. We show that in this regime $X$ is equal to the maximum
number of vertices in such a structure.  We do not show that all $K_{r+1}$-free subgraphs of maximum size have this form; indeed, in some cases different---but closely related---maximum structures are also possible.

\subsection{Concentration via Poissonian statistics} \label{sec:Poissonian}
We now prepare to describe our main result.  As a typical $K_r$-free graph is $(r-1)$-partite it will be more notationally convenient to work with $K_{r+1}$-free subgraphs. For $r = o(\log \log n / \log\log\log n)$, set $X = \alpha_{r+1}(G_{n,1/2}).$ In analogy with the case of triangles, define  
\begin{gather*}Y_k =
\text{number of independent sets of size }k \\
 Z_{k,i} = \text{number of }k\text{-sets
of vertices that contain exactly }i\text{ edges}.   
\end{gather*} 
For each $ j \in \{1, \dots, r\}$ define
\[ \mu_j = \left\lfloor \frac{r-j}{j} \right\rfloor = \left\lfloor \frac{r}{j} \right\rfloor -1 \ \ \ \text{ and } \ \ \
\xi_j = j ( \mu_j +2) -r. \]
Note that we have $ 1 \le \xi_j \le j$.  Just as we were occasionally able to form a triangle-free graph by combining an independent set of size $k$ with a $k+1$-set with a single defect, we will see that there are values for $n$ 
in the regime where $ \E[ Y_{k+1}]$ goes to zero sufficiently slowly (to be
precise, $n$ in the regime defined by 
$ \E[Y_{k+1}] \approx ( \log n)^{- 2\mu_j} $) 
for which we can form a $K_{r+1}$-free subgraph in $ G_{n,1/2}$
with $kr + j$ vertices by using $\xi_j$ many $(k+1)$-sets that have $\mu_j$ defects each and $j - \xi_j$ many $(k+1)$-sets with $ \mu_j+1$ defects each. Each defect
in this structure is covered by an independent set of size $k$, and it
follows that the total number of $k$-sets and $(k+1)$-sets involved
in the structure is
\[ \xi_j ( \mu_j +1) + (j - \xi_j)( \mu_j +2) =  j( \mu_j+2) - \xi_j  = r. \] 
Note that we cannot form such a structure with fewer than $ \xi_j$ many $(k+1)$-sets with $\mu_j$ defects each as this would require too many $k$-sets to cover all of the defects.
Of course, in order for such a structure to 
appear $n$ must be large enough for $(k+1)$-sets
with $ \mu_j$ defects to appear in $ G_{n,1/2}$. Note that as $n$ increases the smallest number of defects in a $(k+1)$-set decreases.  
Our main result is that structures of the type discussed above govern the
evolution of $X$; the appearance of these structures
is in turn governed by the number of
$(k+1)$-sets with few defects; and the number of
$(k+1)$-sets with a particular number of defects has the Poissonian statistics that we would anticipate. 

\begin{theorem} \label{thm:poisson-version}
  Let $k = 2 \log_2 n + O(\log\log n)$, $r = o(\log \log n/\log\log\log n)$. Then, for $n$  sufficiently large and $j \in \{1, \dots, r\}$ we have 
    $$\left|\PP(X \geq kr + j)  - \PP(Z_{k+1,\mu_{j}} \geq \xi_{j})\right| \leq  (\log n)^{-1/2}\,.$$
    For $\lambda=\E [Z_{k+1,\mu_{j}}] \leq n^{1/4}$ we also have $$\left|\PP(\Poiss(\lambda) \geq \xi_{j})   - \PP(Z_{k+1,\mu_{j}} \geq \xi_{j})\right| \leq n^{-1/4}\,.$$
\end{theorem}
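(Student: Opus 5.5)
We will prove the two statements separately, starting with the Poisson approximation, which is the more routine part.

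\emph{Poisson approximation.} For the second statement we use the Stein--Chen method (Arratia--Goldstein--Gordon) for $Z_{k+1,\mu_j}=\sum_S \mathbf 1_{E_S}$, where $S$ ranges over $(k+1)$-sets and $E_S$ is the event that $S$ spans exactly $\mu_j$ edges. Two indicators are dependent only if $|S\cap S'|\ge 2$. The error is bounded by $b_1+b_2$, the usual sums over overlapping pairs of $\PP(E_S)\PP(E_{S'})$ and $\PP(E_S\cap E_{S'})$. For an overlap of size $\ell$, we bound $\PP(E_S\cap E_{S'})$ by $2^{-2\binom{k+1}{2}+\binom{\ell}{2}}$ times a factor $k^{O(\mu_j)}$ that accounts for the defects. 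This is the standard second-moment computation for cliques in $G_{n,1/2}$, and it gives a contribution $\lambda\cdot n^{-1+o(1)}$ for small $\ell$ and something negligible for $\ell$ close to $k$. Since $\mu_j\le r=o(\log\log n)$, the defect factors are $n^{o(1)}$, and with $\lambda\le n^{1/4}$ the total error is at most $n^{-1/4}$.

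\emph{Upper bound: $X\ge kr+j$ forces $Z_{k+1,\mu_j}\ge\xi_j$ up to $o(1)$.} Let $W$ be a $K_{r+1}$-free set with $|W|=kr+j$. The first step is a whp structural statement: every such $W$ contains a partition into $r$ parts $A_1,\dots,A_r$ in which every part spans few edges, only $O(r)$ edges overall lie inside parts, and $|A_i|\le k+1$. To prove it, we do a union bound over all $K_{r+1}$-free graphs on $kr+j$ vertices, weighted by $2^{-\binom{kr+j}{2}}\binom{n}{kr+j}$. We use a stability/counting result in the style of Kolaitis--Pr\"omel--Rothschild, namely that the number of $K_{r+1}$-free graphs on $m$ vertices that are $t$ edits from $r$-partite is at most $T_r(m)\cdot 2^{-\Omega(tm/r)}$ times the $r$-partite count. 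Each edit saves a factor of about $2^{-m/r}\approx n^{-2}$, which beats the entropy of choosing the edit. Next, a first-moment computation shows that whp the graph has no $(k+2)$-set with $O(r)$ edges, and no $(k+1)$-set with fewer than $\mu_j$ defects unless $Z_{k+1,\mu_j}$ is already large. Counting then finishes the argument. Suppose $j$ parts have size $k+1$, and the $i$-th of them carries $d_i$ defects. Each defect needs a covering part, since otherwise the defect together with one vertex from each other part yields a $K_{r+1}$ with positive probability. Covering a defect $e$ by a $k$-set $A$ means no vertex of $A$ is joined to both ends of $e$, which costs $(3/4)^{k}$. Comparing this cost with the supply of independent $k$-sets shows that each $k$-set can cover at most one defect. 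Hence $\sum_i d_i\le r-j$, and since every $d_i\ge\mu_j$, at least $\xi_j$ of the $(k+1)$-parts have exactly $\mu_j$ defects. We must also handle the case where some $(k+1)$-parts have zero or fewer defects. These are independent $(k+1)$-sets or sets with $<\mu_j$ defects, and the definition of $\mu_j$ through the regime $\E[Y_{k+1}]\approx(\log n)^{-2\mu_j}$ makes them rare. This is where the $(\log n)^{-1/2}$ loss comes from.

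\emph{Lower bound: $Z_{k+1,\mu_j}\ge\xi_j$ gives $X\ge kr+j$ with probability $1-(\log n)^{-1/2}$.} Conditioned on $\xi_j$ sets with $\mu_j$ defects, we must find $j-\xi_j$ further $(k+1)$-sets with $\mu_j+1$ defects, together with $r-j$ independent $k$-sets, one covering each defect, all pairwise disjoint. We also need every cross configuration to avoid $K_{r+1}$, which requires checking cliques that use two defects. We plan to do this by a second-moment or Janson argument over the independent $k$-sets. The expected number of independent $k$-sets covering a given defect is $\E[Y_k](3/4)^k$, and in this regime this is large, at least a power of $\log n$. Disjointness and the non-interaction between different coverers should hold whp.

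\emph{Main obstacle.} We expect the structural upper bound to be the crux. It must exclude all exotic $K_{r+1}$-free configurations, such as sets of defects that share vertices, coverers that cover two defects, and non-$r$-partite pieces. It must do so uniformly for $r$ up to $\log\log n/\log\log\log n$, with only a $(\log n)^{-1/2}$ loss, so the $r$-dependent entropy factors $r^{O(rk)}$ have to be tracked against savings of order $n^{-\Omega(1)}$ for each defect.
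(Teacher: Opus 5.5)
Your Poisson part and overall architecture match the paper: covering defects with independent $k$-sets for the lower bound, and stability plus counting coverers for the upper bound. The upper bound, however, has a genuine gap.

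Your counting step assumes every part of $W$ has size $k$ or $k+1$ (``suppose $j$ parts have size $k+1$''). In general $|W|=kr+a-\sum_{i\le b}\rho_i$, where $a\ge j$ parts have size $k+1$ and $b$ parts have size $k-\rho_i$. A $(k-\rho)$-part is drawn from a pool of $\E[Y_{k-\rho}]=n^{1+\rho+o(1)}$ sets, so it can afford to cover several defects. For example, a $(k-1)$-set covering two disjoint defects, or a star of three, costs only $(3/4)^{2k}=(9/16)^k=n^{-1.66+o(1)}$.

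The paper's Figure~2 ($r=11$, $|W|=11k+3$) is such a configuration. It has four $(k+1)$-parts carrying $2+2+2+3=9>r-j$ defects, covered by six $k$-sets and one $(k-1)$-set. So $\sum_i d_i\le r-j$ is not a valid constraint, and ``each $k$-set covers at most one defect'' does not rule out reaching $kr+j$ with $Z_{k+1,\mu_j}<\xi_j$ by trading $k$-sets for smaller sets that each cover several defects.

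The missing ingredient is the trade-off lemma the paper calls its technical core (\cref{lem:light-sets-covering-defects}). It says that whp no light $(k-\rho)$-set weakly covers more than $1+(\mu+1)\rho$ selected defects, taking at most $\mu$ defects from each $(k+1)$-set. The proof is a first-moment count. It splits the defect graph into $\beta_1$ isolated edges, costing $(3/4)^k$ each, and $\beta_2$ larger components, costing at most $(5/8)^k$ each. It then shows $\beta_1/2+\beta_2\ge\rho+1$, which beats the $n^{1+\rho}$ entropy. Counting defects against coverers then gives $\delta\le j+(b+j-r-1)/(\mu+1)<j$, because $b+j\le r$.

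Three further points need repair.

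First, ``otherwise the defect \dots\ yields a $K_{r+1}$ with positive probability'' is not a whp statement that holds uniformly over all configurations. Exact covering is also the wrong notion for an upper bound: a defect can form triangles with a few vertices of every other part without creating a $K_{r+1}$. The paper instead uses \emph{weak} covering, meaning at most $k^{2/3}$ triangle vertices. It pairs this with a Janson-plus-union-bound lemma (event $\mathcal{B}_4'$): any $k^{2/3}$-subsets of $r-1$ disjoint light sets span a transversal $K_{r-1}$. It also shows that $(k+1)$-sets whp do not weakly cover defects of other $(k+1)$-sets, which you never address.

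Second, your stability count with saving $2^{-\Omega(tm/r)}$ is false. Suppose the within-part edges form a star at $v$, and $v$ has about $t$ neighbours in another part. Then the saving is only about $2^{-m/r-t^2}$; compare the $D^2/8r^2$ exponent in the Balogh--Samotij star lemma. So ``$O(r)$ within-part edges'' does not follow this way. The paper only needs every part to have at most $k^{3/4}$ edges, which it gets from Balogh--Samotij's count of graphs $m^{3/5}$-far from $r$-partite. It then works with a bounded selection of defects in each $(k+1)$-part.

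Third, in the lower bound no Janson argument is needed for $K_{r+1}$-freeness. With distinct, pairwise disjoint exact coverers it is deterministic, since $r+1\le r+\sum_i\bigl(k_i-1-\binom{k_i}{2}\bigr)\le r$.
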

\noindent See Figure \ref{fig:K12} for a depiction of some of the structures that govern the evolution of $ \alpha_{r+1}(G_{n,1/2})$ in the case $r=11$.

\begin{figure}[!ht] 
    \centering
 \includegraphics[width=0.88\textwidth]{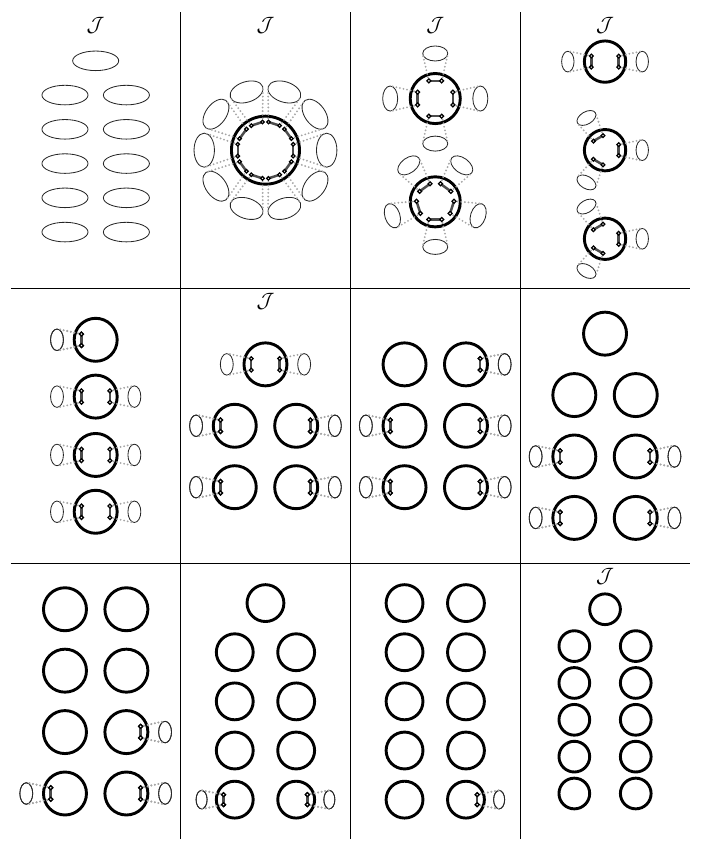}
\caption{ These images depict likely maximum induced subgraphs of $ G_{n,1/2}$ not containing $ K_{12}$ at different points in the evolution of $ G_{n,1/2}$ (where we view $n$ as growing and the edge probability is fixed at $1/2$). The black circles represent sets of $k+1$ vertices that have some number of defects, which are the gray edges. The gray ovals are independent sets of size $k$, and the dotted lines indicate which $k$-sets cover which defects. Note that 
we have $ \mu_1=10, \xi_1=1; \mu_2= 4, \xi_2=1; \mu_3=2, \xi_3=1; \mu_4=1, \xi_4=1; \mu_{5}=1, \xi_5=4$ and  $ {\mathcal J} = \{1,2,3,5,11\}$. We placed a $\mathcal{J}$ for sets whose size lies in $\mathcal{J} \mod 11$. These sizes form the endpoints of the intervals of concentration of $X$.}
\label{fig:K12}
\end{figure}

\subsection{The interval of concentration for \texorpdfstring{$K_{r+1}$}{Kr+1}} \label{sec:intervals}
We now deduce the extent of concentration of $ \alpha_{r+1}( G_{n,1/2})$ for fixed $r$
directly from \cref{thm:poisson-version}.  Implicit in \cref{thm:poisson-version} is that  for each $j \in \{1,2, \dots, r\}$ such that $ \mu_j \neq \mu_{j+1}$ there is an interval in $n$ in which $ \alpha_{r+1}(G_{n,1/2})$ is likely 
to be equal to $ rk + j$.
In order to make this precise, we 
define a sequence of values of $n$, much like we did for the $r = 3$ case in \cref{sec:triangles}.  
We begin by defining the point at
which it is likely that there is a $K_{r+1}$-free set of vertices in $G_{n,1/2}$ that consists of $r$ independent sets of size $k$. Note that $\EE[Y_k]={\binom{n}{k}}2^{-{\binom{k}{2}}}$.
Set\footnote{Note the choice of this particular function is made for
convenience. Any $ \epsilon = o_k(1)$ suffices.} $ \epsilon = \epsilon(k) = 1/\log k $  
and define
\begin{align*}
a_k &=\min \left\{ n : \EE [Y_k] \ge \epsilon^{-1} \right\}.
 \end{align*}
First, let us observe that 
the sequence $a_k$ is exponential in $k$. We begin by noting that the bound $ \binom{n}{k} \le ( ne/k)^k$ implies $ a_k > (k/e)2^{(k-1)/2}$. Next observe that if $n > 2^{(k-1)/2}$,  then 
\[ \E[Y_k] = \left( \frac{ne}{k} 2^{ - \frac{k-1}{2}}\right)^k e^{O(k^2/2^{k-1}) + O( \log k )}.\] 
This implies
\[
a_k = (1 +o_k(1)) \frac{k}{e} 2^{\frac{k-1}{2}} \ \ \
\text{ and } \ \ \  
a_{k+1} = ( \sqrt{2} + o_k(1)) a_k.\]
We will see that for the majority of the values for $n$ in the interval $[a_k, a_{k+1})$ we have $X=rk$ with probability
at least $1 - O(\epsilon) $. Then, as $n$ approaches $a_{k+1}$, the structure of a maximum $K_{r+1}$-free subgraph goes through a series of transitions that interpolate between a subgraph of a Tur\'an graph on $kr$ vertices and a subgraph of 
a Tur\'an graph on $ (k+1)r$ vertices. In order to capture this behavior, 
we need some additional definitions. For $ a_k \le n < a_{k+1}$ define
\[ \mu = \mu(n) = \min \left\{i : \EE [Z_{k+1,i} ] > \epsilon^{-1} \right\},\]

and note that $\mu(n+1) \le \mu(n)$. 
Recall that we define
\[ \mu_j = \left\lfloor \frac{r-j}{j} \right\rfloor = \left\lfloor \frac{r}{j} \right\rfloor-1, \]
and by \cref{thm:poisson-version} we expect to have $K_{r+1}$-free subgraphs with $kr+j$ vertices with some significant probability for those $j$ such that $ \mu \le \mu_j$. Note that the quantity $ \mu(n)$ decreases from $ \Theta( \log n/ \log \log n)$  to $0$ as $n$ increases from $a_k$ to $a_{k+1}$ (see Fact~\ref{fact:numerical} that appears later in the paper for the detailed calculations that imply this). Thus, for the vast majority of the interval there is no improvement relative to the bound $ \alpha_{r+1}( G_{n,1/2}) \ge kr$, since sets of size $k+1$ will have too many defects to be included in our $K_{r+1}$-free subgraph. Then, as $n$ approaches $ a_{k+1}$, we see values of $\mu(n)$ that equal $\mu_j$ for $ 1 \le j\le r$, and it is here that we see increases in $ \alpha_{r+1}( G_{n,1/2})$. There is a series of changes in this value as $ \mu(n) $ decreases which we now describe.

Let $ \cJ $
be the set of integers $ j \in \{1, \dots ,r\} $ such that $ \mu_j \neq \mu_{j+1}$  (where we set $ \mu_{r+1} = -1$) and set $ s = | \cJ|$. For example, if $r=11$, then $\cJ=\{1,2,3,5,11\}$
and if $r=23$, then $\cJ=\{1,2,3,4,5,7,11,23\}$.
Write
$$ \cJ = \{j_i: i \in [s] \} = \{ 1=j_1<j_2<\cdots < j_s = r\},$$
and for each $i \in [s]$ set
\begin{align*}
b_{k,i}  = \min\left\{ n : \EE  [Z_{k+1, \mu_{j_i}}]\ge \epsilon \right\}\,,\qquad 
c_{k,i} = \min\left\{ n : \EE  [Z_{k+1, \mu_{j_i}}] \ge \epsilon^{-1} \right\},
\end{align*}
and note that $c_{k,s} = a_{k+1}$.
Observe that these definitions imply the following chain of inequalities.  The calculations needed to justify these inequalities may be found in \cref{fact:numerical}. 
$$a_k \le b_{k,1} \le c_{k,1} \le b_{k,2} \le c_{k,2} \le \cdots \le b_{k,s-1}\le c_{k, s-1} \le b_{k,s} \le c_{k,s} = a_{k+1}.$$
We are now ready to define the intervals on which $X$ is concentrated. Set $ j_0 = 0$ and  $ c_0 = a_k$.
Define
\[ I_n = \begin{cases}   
    \left\{ kr+ j_{i-1}  \right\}    & \text{if } c_{k,i-1} \le n < b_{k, i} \text{ and } 1 \le i \le s  \\
   \left\{ kr + j_{i-1}, \dots, kr+ j_{i} \right\}  & \text{if } b_{k,i} \le n < c_{k,i}  \text{ and } 1 \le i \le s 
\end{cases}\]
 In other words, if $n \in [b_{k, j_i}, b_{k, j_{i+1}})$, then $I_n$ is either
$\left\{ kr + j_{i-1}, \dots, kr+ j_i \right\}$ or  $\left\{ kr+ j_i  \right\}$ according to whether $n < c_{k,j_i}$ or $n \ge c_{k,j_i}$.  

{\bf Example.} If $r=11$, then $\cJ=\{1,2,3,5,11\}$,  $s=5$,  and $j_1=1, j_2=2, j_3=3, j_4=5, j_5=11$. Consequently, 
\[ I_n = \begin{cases}   
    \{ 11k\} & \text{if } a_k = c_{k,0} \le n < b_{k,1} \\
 \{ 11k, 11k+1\} & \text{if } b_{k,1} \le n < c_{k,1} \\
 \{11k+1\} & \text{if } c_{k,1} \le n < b_{k,2} \\
\{ 11k+1, 11k+2\} & \text{if } b_{k,2} \le n < c_{k,2} \\

\{11k+2\} & \text{if } c_{k,2} \le n < b_{k,3} \\

\{ 11k+2, 11k+3\} & \text{if } b_{k,3} \le n < c_{k,3} \\

\{11k+3\} & \text{if } c_{k,3} \le n < b_{k,4} \\

\{ 11k+3, 11k+4, 11k+5\} & \text{if } b_{k,4} \le n < c_{k,4} \\

\{11k+5\} & \text{if } c_{k,4} \le n < b_{k,5} \\

\{11k+5, \ldots, 11(k+1)\} & \text{if } b_{k,5} \le n < c_{k,5}= a_{k+1}. \\
    \end{cases}\]
See Figure~\ref{fig:K12} for a depiction of some of the 
structures involved in each step here.

With this preparation, we now state our result regarding the concentration of $ \alpha_{r+1}( G_{n,1/2})$.

\begin{theorem} \label{thm:clique}
Fix $r \ge 1$. Then whp $\alpha_{r+1}(G_{n, 1/2}) \in I_n$.
\end{theorem}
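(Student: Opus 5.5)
We deduce \cref{thm:clique} from \cref{thm:poisson-version} together with the monotonicity and ordering facts recorded in \cref{fact:numerical}. Fix $r$, let $n$ be large, and let $k$ be the unique integer with $a_k \le n < a_{k+1}$; then $k = 2\log_2 n + O(\log\log n)$, so \cref{thm:poisson-version} applies, and $\epsilon = 1/\log k \to 0$. Since $\PP(X \ge kr+j)$ is monotone in $j$, it suffices to show two things. First, for each relevant $j$, $\PP(X \ge kr+j) = 1 - o(1)$ whenever $kr+j \le \min I_n$. Second, $\PP(X \ge kr+j) = o(1)$ for $j = \max I_n - kr + 1$ (interpreting $\max I_n = (k+1)r$ in the last range). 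By \cref{thm:poisson-version}, each such statement reduces to the analogous statement for $\PP(Z_{k+1,\mu_j} \ge \xi_j)$, up to an error $(\log n)^{-1/2}$.

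\textbf{Lower bounds.} First, $X \ge kr$ whp on $[a_k,a_{k+1})$. The case $j=r$ of \cref{thm:poisson-version} (with $\mu_r = 0$, $\xi_r = r$) relates $X \ge kr + r$ to $Z_{k+1,0} = Y_{k+1}$. For $X \ge kr$ I would instead apply \cref{thm:poisson-version} at level $k-1$ with $j=r$, which gives $\PP(X \ge (k-1)r + r) \approx \PP(Y_k \ge r)$. Since $\E Y_k \ge \epsilon^{-1} \to \infty$ and $\E Y_k$ is increasing in $n$, the Poisson approximation gives $\PP(Y_k \ge r) \to 1$ for fixed $r$. One must check that $\E Y_k \le n^{1/4}$ or otherwise handle large means. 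A second moment bound, or monotonicity in $n$ via the natural coupling of $G_{n,1/2}$ in $n$, should suffice; this step needs care.

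Next, if $n \ge c_{k,i}$ then $\E Z_{k+1,\mu_{j_i}} \ge \epsilon^{-1}$, so $\PP(Z_{k+1,\mu_{j_i}} \ge \xi_{j_i}) = 1 - o(1)$, since $\xi_{j_i} \le r$ is fixed and the count is Poisson-like with diverging mean. Hence $X \ge kr + j_i$ whp. Again, when the mean exceeds $n^{1/4}$ I would use monotonicity: $X$ is nondecreasing under adding vertices in the vertex-exposure coupling, so it suffices to work at the first $n$ with $\E \ge \epsilon^{-1}$, where the mean is still small.

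\textbf{Upper bounds.} Suppose $n < b_{k,i}$. Take $j = j_{i-1}+1$. Because $j_{i-1} \in \cJ$ and $j_{i-1} < j \le j_i$, we have $\mu_j = \mu_{j_i}$. Then $\PP(X \ge kr + j) \le \PP(Z_{k+1,\mu_{j_i}} \ge 1) + o(1) \le \E Z_{k+1,\mu_{j_i}} + o(1) < \epsilon + o(1)$ by Markov's inequality. If instead $n < c_{k,i}$ (the two-sided window), take $j = j_i + 1$. Then $\mu_j = \mu_{j_{i+1}} < \mu_{j_i}$, and the chain of inequalities in \cref{fact:numerical} gives $n < c_{k,i} \le b_{k,i+1}$, so $\E Z_{k+1,\mu_{j_{i+1}}} < \epsilon$ and the same Markov argument applies. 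For $i = s$ the upper bound $X < (k+1)r + 1$ follows from the level-$(k+1)$ version with $j=1$: here we need $\E Z_{k+2,r-1}$ small when $n < a_{k+1}$, which should follow from the numerical estimates (essentially $\E Z_{k+2,r-1} \le (\log n)^{O(r)}\,\E Y_{k+2}$, and $\E Y_{k+2}$ is tiny).

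\textbf{Main obstacle.} The main difficulty is the bookkeeping at the boundaries: correctly matching $\mu_j$ for non-$\cJ$ indices to $\mu_{j_i}$, and handling the transition between consecutive $k$ (levels $k-1$ and $k+1$) so that \cref{thm:poisson-version} applies with admissible parameters. Everything else is Markov's inequality plus Poisson tails.
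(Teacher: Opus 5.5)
Your proposal is correct and takes the same route as the paper. Both arguments reduce, via \cref{thm:poisson-version}, to bounds on $Z_{k+1,\mu_j}$: take $j=j_{i-1}+1$ (so $\mu_j=\mu_{j_i}$) when $n<b_{k,i}$, take $j=j_i+1$ together with $c_{k,i}\le b_{k,i+1}$ in the multi-point window, and use Poisson tails with mean at least $\epsilon^{-1}$ for the lower bound once $n\ge c_{k,i-1}$. Your explicit handling of the boundary cases ($X\ge kr$ via level $k-1$ with $j=r$, $X\le (k+1)r$ via level $k+1$ with $j=1$, and means above $n^{1/4}$ via a second-moment or vertex-monotonicity argument) fills in steps the paper's short deduction leaves implicit, and using Markov's inequality in place of the Poisson approximation for the upper bounds is a harmless simplification.
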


\begin{proof}
\cref{thm:clique} will follow immediately from \cref{thm:poisson-version}. 
Indeed, setting $j = j_{i-1} + 1 $ we have $ \mu_j =\mu_{j_{i}}$ and 
\begin{multline*} n < b_{k,i} \ \ \implies \ \ \E[ Z_{k+1, \mu_{j_{i}}}] < \epsilon
\ \ \implies \ \ \E[ Z_{k+1, \mu_{j}}] < \epsilon \\ \ \ \implies \ \ 
\PP( X >rk + j_{i-1})= \PP(X \ge rk+j) \le \PP( \Poiss( \epsilon) \ge 1) +o_n(1) = 1 - e^{-\epsilon} +o_n(1) = O( \epsilon).
\end{multline*}
And, for $k$ sufficiently large we have
\begin{multline*} n \ge c_{k,i} \ \ \implies \ \ \E[ Z_{k+1, \mu_{j_{i}}}] > 1/\epsilon \\
\ \ \implies \ \ 
\PP( X \ge rk+j_i) \ge \PP( \Poiss( 1/\epsilon) \ge r) -o_n(1) = 1 - \frac{ (1/\epsilon)^{r}}{ r!}e^{-1/\epsilon} - o_n(1) = 1 - o_k(1).
\end{multline*}
Consequently, the implications above imply that if $c_{k, i-1} \le n < b_{k,i}$, then
$$\PP(X=kr+j_{i-1}) = \PP(X\ge kr+j_{i-1}) -\PP(X>kr+j_{i-1}) \ge 
1-o_k(1)-O(\varepsilon)$$ and if $b_{k, i} \le n < c_{k,i}$, then, since $c_{k, i-1}\le b_{k, i} \le n < c_{k,i} \le b_{k, i+1}$, we have
\begin{equation*}\PP(X>kr+j_{i}) = O(\varepsilon) \qquad \hbox{and}\qquad  \PP(X<kr+j_{i-1}) = o_k(1). \qedhere \end{equation*}
\end{proof}

The proof of \cref{thm:poisson-version} is split into upper and lower bounds.  Showing a lower bound on $X \geq kr + j$ will be fairly simple: we show that one can take $\xi_j$ many $(k+1)$-sets with $\mu_{j}$ defects, $j-\xi_j$ many $(k+1)$-sets
with $ \mu_j+1$ defects, and $r - j$ many $k$ independent sets and combine them to get a $K_{r+1}$-free set on $kr + j$ vertices.  The upper bound is more delicate. 
One important tool in the proof of the upper bound is a result of Balogh  and Samotij~\cite{balogh2019efficient} which counts the number of $K_{r+1}$-free graphs on $k$ vertices that are far from being $r$-partite. Their argument is an application of an efficient hypergraph container lemma (see~\cref{lem:far}) and can be viewed as a generalization and refinement of the classical works of Erd\H{o}s-Kleitman-Rothschild~\cite{EKR} and Kolaitis-Pr\"omel-Rothschild~\cite{KPR} who showed that almost all $K_{r+1}$-free graphs are $r$-partite.

\subsection{Color-critical graphs and other properties} \label{sec:other-properties}

Rather than looking only at the largest $K_r$-free subgraph, we can similarly examine the largest $F$-free graph, where we use $F$-free to mean that there is no (not necessarily induced) subgraph  isomorphic to $F$.  With this in mind, define $\alpha_F(G)$ to be the size of the largest $F$-free subgraph of $G$.

Say that a graph is $r$-{\em color-critical} (henceforth $r$-{\em critical}) if $\chi(F)=r+1$ and $F$  contains an edge whose deletion reduces its chromatic number to $r$. For example, $C_5$ is 2-critical. While we are not able to deduce as refined a statement for $\alpha_F$ for color-critical graphs, we are able to show concentration on an interval of  size at most $r{+1}$.

\begin{theorem}\label{thm:color-critical-concentration}
    Fix $r \geq 2$ and an $r$-critical graph $F$.  There is a sequence $m_n$ so that with high probability $\alpha_F(G_{n,1/2}) \in [m_n-r,m_n]$.
\end{theorem}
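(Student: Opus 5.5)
Fix the threshold $k=k(n)$ for the independence number, i.e.\ the largest $k$ with $\E[Y_k]\ge \epsilon^{-1}$, and aim for $m_n = r(k+1)$ (or $rk+r$ adjusted at the boundary). The window comes from squeezing $\alpha_F$ between the quantities $\alpha_{r+1}$-type constructions already controlled by \cref{thm:poisson-version}.

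\textbf{Lower bound.} An $r$-partite graph contains no $r$-critical $F$, since $\chi(F)=r+1$. So I would show that whp $G_{n,1/2}$ contains $r$ disjoint independent sets of size $k$; their union induces an $r$-partite graph and is therefore $F$-free. This gives $\alpha_F\ge rk$. This is exactly the lower-bound structure used for $\alpha_{r+1}$. Because $\E[Y_k]\ge \epsilon^{-1}\to\infty$ and $Y_k$ is Poisson-like (second moment), whp there are at least $r$ independent $k$-sets. Two such sets typically meet in $O(1)$ vertices, so whp one can choose $r$ pairwise disjoint ones. Hence $\alpha_F\ge rk = m_n-r$.

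\textbf{Upper bound.} I need to show that whp there is no $F$-free set of $r(k+1)+1$ vertices. I would first-moment over vertex sets $S$ of size $s=r(k+1)+1$, summing $\PP(G[S]\text{ is }F\text{-free})$, and split $F$-free graphs on $[s]$ by their distance from $r$-partite.

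\emph{Far from $r$-partite.} For graphs needing at least $\delta s^2$ or so edge edits, I would use the container-based count, the analogue of \cref{lem:far} for $r$-critical $F$. Balogh--Samotij-type bounds give at most $2^{(1-1/r)s^2/2 - \omega(s\log s)}$ such graphs, and this kills the $\binom ns 2^{-\binom s2}$ factor.

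\emph{Near $r$-partite.} Use the standard stability fact for color-critical $F$: an $F$-free graph that is close to $r$-partite, after deleting a bounded-degree-type exceptional set, has some part $V_i$ containing at most $O(1)$ edges. Otherwise an edge inside a part, together with many common neighbours in the other parts, builds $F$, using the critical edge. Since $s=r(k+1)+1$, some part has at least $k+2$ vertices. A $(k+2)$-set with $O(1)$ edges has expected count $n^{k+2}2^{-\binom{k+2}{2}}(\log n)^{O(1)}\to0$ when $\E[Y_{k+1}]=O(\epsilon^{-1})$, because the $k+2$ level is a factor $n2^{-k}\approx n^{-1}$ smaller. Quantitatively, the number of near-$r$-partite graphs with a prescribed defect pattern, times $\binom ns 2^{-\binom s2}$, is bounded by $\E[Z_{k+2,O(1)}]\cdot(\text{poly}\log)$ times the count of the remaining $r$-partite structure. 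This reduces to the same computation as the upper bound in \cref{thm:poisson-version}.

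\textbf{Main obstacle.} The hard step is the near-$r$-partite stability for general $F$. For $K_{r+1}$ the "covering" of defects is explicit. For $F$, I would show that an edge inside a part together with a common neighbourhood in every other part of linear size produces $F$, via the critical edge and supersaturation. Each intra-part edge must then be "covered" by a large low-degree set, which costs a factor $2^{-\Omega(k)}$ per defect. That cost lets the union bound absorb up to $O(1)$ (indeed up to $O(\log n/\log\log n)$) defects.

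I would define $m_n$ directly as the largest $m$ with $\PP(\alpha_F\ge m)\ge 1/2$. The two bounds then show that the whp window lies within $[r k, r(k+1)]$, which has width $r$. Near the transition $a_{k+1}$, one instead uses $k+1$ on the lower side, so the window still has length at most $r$.
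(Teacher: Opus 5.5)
\textbf{The upper bound has a genuine gap.} Your lower bound is the paper's, and your window $[rk, r(k+1)]$ (with $k$ largest such that $\E[Y_k]\ge\epsilon^{-1}$) is in fact correct. The paper does no stability analysis here. It imports the Pr\"omel--Steger enumeration (\cref{thm:PSFfree}): there are at most $2^{(1-1/r)m^2/2+m\log_2 r+O(\log m)}$ $F$-free graphs on $[m]$. A bare first moment then finishes. In your notation, the computations of \cref{lem:compare-to-IS} and \cref{lem:m_0-shift} give $\E[X_{r(k+1)+1}]\le n^{-1+o(1)}\E[Y_{k+1}]^r\le n^{-1+o(1)}(\log k)^r=o(1)$.

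Without that theorem you must reprove an enumeration of comparable precision. When $\E[Y_{k+1}]$ is near $\epsilon^{-1}$, the $r$-partite graphs alone contribute $n^{-1+o(1)}$ to the expected number $\E[X_s]$ of $F$-free $s$-sets at $s=r(k+1)+1$. So the non-$r$-partite $F$-free graphs on $s$ vertices must be counted to within a factor $n^{1-\delta}=2^{\Theta(s/r)}$ of the $r$-partite ones, i.e.\ with an error only linear in $s$ in the exponent. Your split does not deliver this. A $\delta s^2$ threshold for ``far'' leaves everything between $O(1)$ and $\delta s^2$ intra-part edges to the near case. The inputs that make the $K_{r+1}$ argument work are specific to cliques: the $m^{3/5}$-threshold count in \cref{prop:EKR-BS}, and the mechanism of \cref{lem:multipartite} that turns a defect with large common neighbourhoods into a $K_{r+1}$. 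So the claim that this ``reduces to the same computation as in \cref{thm:poisson-version}'' is unsupported for general $F$.

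\textbf{The stability claim is not the statement you need.} ``Some part contains $O(1)$ edges'' does not give what your next step uses, namely that the part with at least $k+2$ vertices is light. Moreover, for general $F$ the obstruction to an intra-part edge is not a common neighbourhood. Take $F=C_5$ and $r=2$: an edge $uv$ inside $V_1$ closes a $C_5$ through the path $u,c,b,a,v$ with $a,c\in V_2$ and $b\in V_1$. What matters is whether some $b\in V_1\setminus\{u,v\}$ is adjacent to distinct vertices of $N(u)\cap V_2$ and $N(v)\cap V_2$, not the size of $N(u)\cap N(v)$. Formulating and proving the correct $F$-specific covering statement, with a union bound over all defect patterns, amounts to reproving Pr\"omel--Steger; citing it closes the gap.

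\textbf{Choice of $m_n$.} Drop the closing definition of $m_n$ as the largest $m$ with $\PP(\alpha_F\ge m)\ge 1/2$. It only yields $\PP(\alpha_F>m_n)<1/2$, not $o(1)$. Keep $m_n=r(k+1)$, which works once the first-moment bound above is in place.
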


\begin{remark}
    The proof of \cref{thm:color-critical-concentration} yields a somewhat explicit choice of $m_n$.  Let $m_0$ be the smallest integer so that the expected number of $F$-free graphs in $G_{n,1/2}$ whose vertex set has size $m_0$ is at most $1$.  Then we take either $m_n = m_0 -1$ or $m_n = m_0 - 2$ depending on the expected number of $F$-free subgraphs of size $m_0$.
\end{remark}

As an example, Theorem~\ref{thm:color-critical-concentration} implies that whp $\alpha_{C_5}(G_{n,1/2}) \in \{m_n-2, m_n-1, m_n\}$.  The proof of \cref{thm:color-critical-concentration} is much simpler: the upper bound is given by using a theorem of Pr\"omel and Steger enumerating such graphs along with a basic union bound; for the lower bound, the theorem of Pr\"omel and Steger will essentially say that the number of such graphs is the same as the number of $r$-partite graphs, and so we will find an $r$-partite graph using the same ideas as in the proof of \cref{thm:poisson-version}. 

More generally, a \emph{property} $\mathcal{P}$ of graphs is a class of graphs closed under isomorphism. Given a property $\mathcal{P}$ and a graph $G$, let $\alpha_{\mathcal{P}}(G)$ be the maximum number of vertices in an induced subgraph of $G$ that is isomorphic to some member of $\mathcal{P}$.
A graph property $\mathcal{P}$ is \emph{monotone} (respectively \emph{hereditary}) if every (induced) subgraph of a graph with property $\mathcal{P}$ also has property $\mathcal{P}$. For instance, being a bipartite graph or being a triangle-free graph is monotone, while being perfect or chordal is hereditary. To every monotone (hereditary) property $\mathcal{P}$, there exists a collection $\mathcal{F}$ of graphs such that $G \in \mathcal{P}$ if and only if every (induced) subgraph of $G$ is not isomorphic to any member of $\mathcal{F}$. 
We end with  the following problem, which asks if for every monotone or hereditary property there is concentration on a constant-length interval.

\begin{problem} \label{conj:main}
    Let $\mathcal{P}$ be a monotone or hereditary property. Does there exist a positive integer $R=R_{\mathcal{P}}$ such that with high probability $\alpha_{\mathcal{P}}(G_{n, 1/2})$ lies in an interval of length at most $R$?
\end{problem}

We believe this is true for all properties $\mathcal{P}$ for which the number of graphs on $n$ vertices in $\mathcal{P}$ is $2^{\Omega(n^2)}.$  

\section{Preparations for proof of Theorem \texorpdfstring{\ref{thm:poisson-version}}{1}}

We now begin discussing various ingredients needed for our proof of Theorem~\ref{thm:poisson-version}.  Throughout the rest of the paper we will assume that $n$ is sufficiently large.  We will first compute various relevant scales and then import a few useful tools such as Janson's inequality and a consequence of the Stein-Chen method on Poisson approximation.

\subsection{Useful asymptotics}

Recall that $Z_{k,i}$ is the number of $k$-sets with exactly $i$ edges and $Y_k = Z_{k,0}$ is the number of independent sets of size $k$.  We start with a basic calculation for understanding the scales at play. 

\begin{fact}\label{fact:numerical}
    We have $\E [Y_k] = n^{O(1)}$ if and only if $k = 2\log_2 n - 2 \log_2\log_2 n + O(1).$  If $k = 2 \log_2 n + O(\log \log n)$ then we also have $\E [Y_{k+1}]/\E [Y_k] = n^{-1 + o(1)}$.  If we further have $i = O(\log \log n)$ then $$\E [Z_{k,i}] = k^{2i(1 + o(1))} \E [Y_k]$$
\end{fact}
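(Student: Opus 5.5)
All three claims reduce to explicit computations with the exact formulas
\[
\E[Y_k]=\binom{n}{k}2^{-\binom{k}{2}},\qquad \E[Z_{k,i}]=\binom{n}{k}\binom{\binom{k}{2}}{i}2^{-\binom{k}{2}} .
\]
The plan is to take logarithms and track the terms to accuracy $O(1)$ or $o(\log n)$, as each part requires.

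\textbf{First claim.} Write $\binom{n}{k}=\frac{n^k}{k!}e^{O(k^2/n)}$, valid since $k=O(\log n)$. Apply Stirling to get
\[
\log_2 \E[Y_k]=k\Bigl(\log_2 n-\log_2 k+\log_2 e-\tfrac{k-1}{2}\Bigr)+O(\log k).
\]
Hence $\E[Y_k]=n^{O(1)}$ if and only if the bracket equals $O(\log n/k)=O(1)$. Here we use that $k=\Theta(\log n)$ in this regime, which must be checked first: if $k\ge(2+\delta)\log_2 n$ the expectation is tiny, and if $k\le(2-\delta)\log_2 n$ it is huge, so a short monotonicity argument in $k$ suffices. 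Solving $\tfrac{k}{2}=\log_2 n-\log_2 k+O(1)$ with $\log_2 k=\log_2\log_2 n+1+o(1)$ gives $k=2\log_2 n-2\log_2\log_2 n+O(1)$. The converse is a direct substitution.

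\textbf{Second claim.} Compute the ratio exactly:
\[
\frac{\E[Y_{k+1}]}{\E[Y_k]}=\frac{n-k}{k+1}\,2^{-k}.
\]
For $k=2\log_2 n+O(\log\log n)$ we have $2^{-k}=n^{-2}(\log n)^{O(1)}$ and $(n-k)/(k+1)=n^{1+o(1)}$. The product is therefore $n^{-1+o(1)}$.

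\textbf{Third claim.} Note that
\[
\frac{\E[Z_{k,i}]}{\E[Y_k]}=\binom{\binom{k}{2}}{i}.
\]
With $N=\binom{k}{2}=\Theta(k^2)$ and $i=O(\log\log n)=o(N)$, we have $\binom{N}{i}=N^i/i!\cdot(1+o(1))$. Also $N^i=k^{2i}2^{-i}(1+O(1/k))^i$. So it remains to check that the correction factor $2^{-i}/i!$ equals $k^{o(i)}$. Indeed $\log(2^i i!)=O(i\log i)=O(i\log\log\log n)$, while $\log k^{2i}=\Theta(i\log\log n)$, so the ratio is $k^{2i(1+o(1))}$. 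The case $i=0$ is trivial.

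\textbf{Main obstacle.} The only delicate point is the first claim. It needs the Stirling bookkeeping precise to $O(1)$ in the exponent divided by $k$, together with the observation that the polynomial window $n^{O(1)}$ translates into an $O(1)$ window in $k$ because $k=\Theta(\log n)$.
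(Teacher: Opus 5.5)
Your proposal is correct and follows the paper's route: take logarithms of the explicit formulas $\binom{n}{k}2^{-\binom{k}{2}}$ and $\binom{n}{k}\binom{\binom{k}{2}}{i}2^{-\binom{k}{2}}$ using Stirling, and for the second claim your exact ratio $\frac{n-k}{k+1}2^{-k}$ simply writes out what the paper calls ``similar.'' One caveat: $\E[Y_k]$ increases for $k$ up to about $\log_2 n$, so it is unimodal rather than monotone on $k\le(2-\delta)\log_2 n$, and bounded $k$ is a genuine exception (for instance $\E[Y_1]=n$). Hence the ``only if'' direction, both in the statement and in your monotonicity step, tacitly requires $k\to\infty$, which is the regime where the fact is used.
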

\begin{proof}
    Write $k = 2\log_2 n - 2 \log_2 \log_2 n - t$ and compute \begin{align*}
        \E [Y_k] = \binom{n}{k} 2^{-\binom{k}{2}} = \exp_2\left((1 + o(1))k[\log_2 n - \log_2 k - k/2 + O(1)]\right) = n^{(1+o(1))t + O(1)}\,.
    \end{align*}
    The second assertion is similar.  For the third assertion, note that $$\E[ Z_{k,i}] = \binom{n}{k} \binom{\binom{k}{2}}{i}2^{-\binom{k}{2}} = k^{2i(1 + o(1))} \E [Y_k] $$
    completing the proof.
\end{proof}

We will also make frequent use of a second moment-type calculation:

\begin{fact}\label{fact:Y-k-Delta}
    If $k = 2\log_2 n + O(\log \log n)$ then $$\binom{n}{k}\sum_{j = 1}^{k-1} \binom{k}{j}\binom{n-k}{k-j}2^{-2\binom{k}{2}+ \binom{j}{2}} \leq n^{-1 + o(1)}( \E [Y_k]^2 + \E [Y_k]) \,. $$
\end{fact}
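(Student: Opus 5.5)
We bound the sum termwise, splitting the range of the overlap $j$ into a small range and a large range. Write the $j$-th term as $T_j := \binom{n}{k}\binom{k}{j}\binom{n-k}{k-j}2^{-2\binom{k}{2}+\binom{j}{2}}$. Since $\binom{n-k}{k-j}\le \binom{n}{k}\binom{k}{j}\,(k/n)^{j}\cdot e^{O(k^2/n)}$, we have
\[
\frac{T_j}{\E[Y_k]^2} \le \binom{k}{j}^2 \left(\frac{k}{n}\right)^{j}(1+o(1))\, 2^{\binom{j}{2}} \le \left(\frac{e^2k^3\, 2^{(j-1)/2}}{j^2\, n}\right)^{j}(1+o(1)).
\]
This is the standard Bollob\'as--Erd\H{o}s/Matula second-moment ratio.

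\textbf{Small overlaps.} For $1\le j\le k/2$ (or more precisely, while $2^{(j-1)/2}\le n^{1-\delta}$ with $\delta$ slowly vanishing), the base of the displayed power is at most $k^3 2^{k/4}/n = n^{-1/2+o(1)}$, because $2^{k/2}=n^{1+o(1)}$ when $k=2\log_2 n+O(\log\log n)$. The $j=1$ term is therefore at most $k^3/n\cdot \E[Y_k]^2 = n^{-1+o(1)}\E[Y_k]^2$. The remaining terms in this range decay geometrically, so their total is $n^{-1+o(1)}\E[Y_k]^2$.

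\textbf{Large overlaps.} For $j> k/2$ we instead write $\ell=k-j$ and compare with $\E[Y_k]$ rather than $\E[Y_k]^2$. Using $\binom{k}{j}=\binom{k}{\ell}$ and $\binom{k}{2}-\binom{j}{2} = \ell(k-1) - \binom{\ell}{2}$, we get
\[
\frac{T_j}{\E[Y_k]} = \binom{k}{\ell}\binom{n-k}{\ell}2^{-\ell(k-1)+\binom{\ell}{2}} \le \left(\frac{e^2 k n}{\ell^2}\, 2^{-(k-1)+(\ell-1)/2}\right)^{\ell}.
\]
Since $2^{k-1}= n^{2+o(1)}$ and $\ell\le k/2$, the base is at most $k n\cdot 2^{k/4}\cdot 2^{-(k-1)} = n^{1+1/2-2+o(1)} = n^{-1/2+o(1)}$. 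For $\ell=1$ it is $kn\,2^{-(k-1)}=n^{-1+o(1)}$. The terms therefore again decay geometrically in $\ell$, giving a total of $n^{-1+o(1)}\E[Y_k]$.

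Adding the two ranges gives the claim. The only delicate point is the middle range $j\approx k/2$, where both bases are about $n^{-1/2+o(1)}$. There the terms are $n^{-\Omega(k)}$ times the relevant mean, which is far smaller than $n^{-1}$, so this is harmless. Throughout we must keep track that the $O(\log\log n)$ error in $k$ only contributes $n^{o(1)}$ factors to the bases. For instance, $2^{k/2} = n\cdot(\log n)^{O(1)}$, and this is absorbed into $n^{o(1)}$.
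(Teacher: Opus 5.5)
Your proof is correct and follows the paper's argument: small overlaps are bounded against the $j=1$ term, giving $n^{-1+o(1)}\E[Y_k]^2$, and large overlaps against the $j=k-1$ term, giving $n^{-1+o(1)}\E[Y_k]$. The paper telescopes the ratios $C_{j+1}/C_j$ from each end up to $3k/4$, whereas you bound each term directly as a $j$-th (resp.\ $\ell$-th) power with the split at $k/2$; this is a cosmetic difference, and the extra factor $\binom{k}{j}$ in your bound on $\binom{n-k}{k-j}$ is unnecessary but harmless.
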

\begin{proof}
   For $j \in [k-1]$, write $C_j= {\binom{k}{j}}{\binom{n-k}{k-j}} 2^{\binom{j}{2}}$ so that \begin{align*}
        \binom{n}{k}\sum_{j = 1}^{k-1} \binom{k}{j}\binom{n-k}{k-j}2^{-2\binom{k}{2}+ \binom{j}{2}} = \binom{n}{k} 2^{-2\binom{k}{2}} \sum_{j = 1}^{k-1} C_j.
    \end{align*}
    Bound  \begin{equation*}
        \frac{C_{j+1}}{C_j} = \frac{k-j}{j+1} \frac{\binom{n-k}{k-j-1}}{\binom{n-k}{k-j}} 2^j \leq \log^3 n \cdot n^{-1} 2^{j}\,.
    \end{equation*}
   Above we added an extra factor of $\log n$ to absorb all implicit constants.  For $2 \leq j \leq (3/4)k$ we then have 
   \begin{align*}
        \frac{C_j}{C_1} = \prod_{i = 1}^{j-1} \frac{C_{i+1}}{C_i} \lesssim \left(\frac{\log^3 n \cdot 2^{j/2}}{n}\right)^{j-1} \leq n^{-1 + o(1)}\,.
    \end{align*}
    Similarly bound \begin{equation*}
        \frac{C_{k-(j+1)}}{C_{k - j}} \leq \log^3 n \cdot n \cdot 2^{-(k-j)}.
    \end{equation*}
    For $2 \leq k - j \leq (3/4)k$ we then have $$\frac{C_{k - j}}{C_{k-1}} \leq n^{-1 + o(1)}\,.$$
    This implies \begin{align*} \binom{n}{k}\sum_{j = 1}^{k-1} \binom{k}{j}\binom{n-k}{k-j}2^{-2\binom{k}{2}+ \binom{j}{2}} &\leq (1 + o(1))\binom{n}{k} 2^{-2\binom{k}{2}}(C_1 + C_{k-1}) \\
    &= (1 + o(1))\binom{n}{k} 2^{-2\binom{k}{2}}\left(k{n-k \choose k-1} + k(n-k)2^{{k-1 \choose 2}}\right) \\ 
    &= n^{-1 + o(1)}( \E [Y_k]^2 + \E [Y_k])\,. \qedhere
    \end{align*}
\end{proof}

\subsection{Poisson Approximation}

Define $\Poiss(\lambda)$ to be the Poisson distribution of mean $\lambda$ and set $d_{\TV}(X,\Poiss(\lambda))$ to be the total variation distance to the Poisson of mean $\lambda$, i.e.\ $$d_{\TV}(X,\Poiss(\lambda)) = \sum_{k \geq 0}\left|\PP(X = k) - \frac{e^{-\lambda} \lambda^k}{k!} \right|.$$

The Stein-Chen method will allow us to show that the number of copies of certain subgraphs will be approximately Poisson.  We use the following simplified version from \cite[Thm.\ 1]{arratia1990poisson}

\begin{theorem} \label{thm:stein-chen}
   For $\alpha \in \cA$, let $X_\alpha$ be  a Bernoulli random variable and set  $p_\alpha = \E [X_\alpha] = \PP(X_\alpha = 1)$.
     Let $W = \sum_{\alpha \in \cA} X_\alpha$. For each $\alpha$, let $B_\alpha = \{\beta : X_\alpha \text{ and } X_\beta \text{ are not independent}\}.$   Then 
     $$d_{\TV}(W, \Poiss(\E [W]))  \leq 2\left(\sum_{\alpha \in \cA} \sum_{\beta \in B_\alpha}p_\alpha p_\beta + \sum_{\alpha \in \cA} \sum_{\beta \in B_\alpha \setminus \alpha} \E[X_\alpha X_\beta] \right)\,.$$
\end{theorem}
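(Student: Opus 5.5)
The statement is quoted from Arratia–Goldstein–Gordon, so the plan is to derive it from the Stein–Chen method rather than cite it. Write $\lambda = \E[W]$ and fix $A \subseteq \mathbb{Z}_{\ge 0}$. Let $f = f_A$ be the bounded solution of the Stein equation
\[
\lambda f(w+1) - w f(w) = \mathbf{1}_A(w) - \PP(\Poiss(\lambda)\in A), \qquad f(0)=0.
\]
Taking expectations at $w = W$ gives
\[
\PP(W\in A)-\PP(\Poiss(\lambda)\in A) = \E[\lambda f(W+1) - W f(W)].
\]
So the task reduces to bounding the right-hand side uniformly in $A$, and then summing over the two relevant choices of $A$ (the sets where the point probabilities of $W$ exceed, respectively fall below, the Poisson ones). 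This will recover the $\ell^1$ version of $d_{\TV}$ used here, at the cost of a factor $2$.

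The first key step is the standard analytic input on $f_A$: $\|\Delta f\|_\infty \le (1-e^{-\lambda})/\lambda \le \min(1,\lambda^{-1})$, where $\Delta f(w) = f(w+1)-f(w)$. I would prove this by writing $f_A$ explicitly as a sum over the Poisson distribution, reducing to singletons $A=\{j\}$ by linearity, and checking that $\Delta f_{\{j\}}$ is positive only at $w=j$. I expect this to be the main technical obstacle, although it is classical. Since only a constant is claimed, I would settle for the cruder bound $\|\Delta f\|_\infty\le 1$.

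The second step is the decomposition. Write $W f(W) = \sum_\alpha X_\alpha f(W)$ and set $V_\alpha = \sum_{\beta\notin B_\alpha} X_\beta$, with $\alpha\in B_\alpha$ by convention. On the event $X_\alpha=1$ we have
\[
W = 1 + V_\alpha + \sum_{\beta\in B_\alpha\setminus\alpha} X_\beta .
\]
Hence
\[
\E[X_\alpha f(W)] = \E\Bigl[X_\alpha f\Bigl(V_\alpha+1+\textstyle\sum_{\beta\in B_\alpha\setminus\alpha}X_\beta\Bigr)\Bigr],
\]
which differs from $\E[X_\alpha f(V_\alpha+1)]$ by at most $\|\Delta f\|\sum_{\beta\in B_\alpha\setminus\alpha}\E[X_\alpha X_\beta]$, using telescoping.

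To finish this step I need $X_\alpha$ independent of $V_\alpha$. Pairwise non-dependence as defined in the statement does not give this, so I would interpret $B_\alpha$ as a dependency neighbourhood: $X_\alpha$ is independent of $\{X_\beta:\beta\notin B_\alpha\}$. This holds in the paper's applications, where indicators of vertex sets sharing fewer than two vertices depend on disjoint edge sets. Under this interpretation, $\E[X_\alpha f(V_\alpha+1)] = p_\alpha\E f(V_\alpha+1)$.

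The third step compares $\lambda \E f(W+1) = \sum_\alpha p_\alpha \E f(W+1)$ with $\sum_\alpha p_\alpha \E f(V_\alpha+1)$. The difference is at most $\|\Delta f\|\sum_\alpha p_\alpha \E[W - V_\alpha] = \|\Delta f\|\sum_\alpha\sum_{\beta\in B_\alpha}p_\alpha p_\beta$. Combining the three steps gives
\[
|\PP(W\in A)-\PP(\Poiss(\lambda)\in A)| \le \sum_\alpha\sum_{\beta\in B_\alpha}p_\alpha p_\beta + \sum_\alpha\sum_{\beta\in B_\alpha\setminus\alpha}\E[X_\alpha X_\beta].
\]
The $\ell^1$ distance is the sum of the two one-sided differences, which yields the factor $2$.
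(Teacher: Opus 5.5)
Your proof is correct. The paper does not prove this statement; it quotes Theorem~1 of Arratia--Goldstein--Gordon. Your argument is precisely the Stein--Chen proof behind that theorem in the case where their residual-dependence term $b_3$ vanishes:
\begin{itemize}
\item the Stein equation with $\|\Delta f\|_\infty\le (1-e^{-\lambda})/\lambda\le 1$;
\item replacing $W$ by $V_\alpha+1$ on $\{X_\alpha=1\}$, which produces the $\E[X_\alpha X_\beta]$ term;
\item comparing $f(W+1)$ with $f(V_\alpha+1)$, which produces the $p_\alpha p_\beta$ term;
\item the factor $2$ coming from the $\ell^1$ normalization of $d_{\TV}$.
\end{itemize}

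What your write-up adds over the citation is the correct hypothesis. You need $\alpha\in B_\alpha$, and you need $X_\alpha$ independent of the whole family $\{X_\beta:\beta\notin B_\alpha\}$. This is genuinely necessary, not a convenience: with only pairwise independence the displayed inequality is false. For instance, take $(A,B)$ uniform on $\mathbb{F}_q^2$ and $X_x=\mathbf{1}[Ax+B=0]$ for $x\in\mathbb{F}_q$. These indicators are pairwise independent with $p_x=1/q$, so $B_x=\{x\}$ and the right-hand side is $2/q$. Yet $\E[W]=1$ and $\PP(W=0)=(q-1)/q^2\to 0$, while $\PP(\Poiss(1)=0)=e^{-1}$.

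As you note, your strengthened hypothesis does hold in the paper's application in \cref{lem:Poisson-approx-Z}. There, indicators of $k$-sets sharing at most one vertex are functions of disjoint sets of edges.
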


Our main use of \cref{thm:stein-chen} is to show Poissonian statistics for $Z_{k,i}$.

\begin{lemma}\label{lem:Poisson-approx-Z}
    If $k = 2\log_2 n + O(\log \log n)$ and $i = O\left(\log n \log n\right)$ then $$d_{\TV}(Z_{k,i},\Poiss(\E [Z_{k,i}])) \leq n^{-1 + o(1)} ( \E [Z_{k,i}]^2 + \E [Z_{k,i}])\,. $$
\end{lemma}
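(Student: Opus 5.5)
We apply \cref{thm:stein-chen} with index set $\cA = \binom{[n]}{k}$ and $X_\alpha$ the indicator that $G[\alpha]$ has exactly $i$ edges. Then $p_\alpha = p := \binom{M}{i}2^{-M}$ with $M = \binom{k}{2}$, and $\E[Z_{k,i}] = \binom{n}{k}p$. Two indicators are independent unless $|\alpha\cap\beta| = j \ge 2$. Writing $m = \binom{j}{2}$, the number of ordered pairs at overlap $j$ is $\binom{n}{k}\binom{k}{j}\binom{n-k}{k-j}$. The first sum in \cref{thm:stein-chen} is therefore at most $\E[Z_{k,i}]^2\binom{n}{k}^{-1}\sum_{j\ge 2}\binom{k}{j}\binom{n-k}{k-j}$, and this is $n^{-1+o(1)}\E[Z_{k,i}]^2$ by a direct count. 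All the work is in the correlated sum $\sum_{\alpha}\sum_{\beta\in B_\alpha\setminus\alpha}\E[X_\alpha X_\beta]$, which I split by $j=|\alpha\cap\beta|$.

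I read the hypothesis as $i = O(\log n\log\log n)$; the literal $(\log n)^2$ is presumably a typo. The reason is that the argument below needs $(2i/k)\log k = o(\log n)$. For $i$ of order $k^2$ the statement genuinely fails: two $k$-sets sharing $k-1$ vertices both have exactly $i$ edges with conditional probability about $k^{-1/2}$, which forces $\sum\E[X_\alpha X_\beta]\gg \E Z_{k,i}$.

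\emph{Small and moderate overlaps, $2\le j\le (3/4)k$.} Condition on the number $\ell$ of edges inside $\alpha\cap\beta$. This gives
\[\frac{\E[X_\alpha X_\beta]}{p^2} = 2^{m}\sum_{\ell}\binom{m}{\ell}\frac{\binom{M-m}{i-\ell}^2}{\binom{M}{i}^2}\le 2^{m}\max_\ell \frac{\binom{M-m}{i-\ell}}{\binom{M}{i}}\le 2^{m},\]
using Vandermonde, $\sum_\ell\binom m\ell\binom{M-m}{i-\ell}=\binom Mi$, and the fact that $i\le M/2$. Hence the level-$j$ contribution is at most $\E[Z_{k,i}]^2\binom nk^{-1}C_j$, with $C_j=\binom kj\binom{n-k}{k-j}2^{\binom j2}$ exactly as in the proof of \cref{fact:Y-k-Delta}. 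The ratio estimates there give $C_j\le n^{-1+o(1)}C_1$ in this range, and $C_1\le k\binom{n}{k}\cdot kn^{-1}$. So these terms total $n^{-1+o(1)}\E[Z_{k,i}]^2$.

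\emph{Large overlaps, $j=k-t$ with $1\le t\le k/4$ (the main obstacle).} Here the crude factor $2^m$ is useless, because $p\,2^{\binom{k-1}{2}}=\binom Mi 2^{-(k-1)}$ may be large. I would instead bound $\E[X_\alpha X_\beta]\le p\cdot q_t$, where $q_t$ is the maximum over fixings of $G[\alpha]$ with $i$ edges of the probability that the $t\binom{}{}$-vertex extension gives $\beta$ exactly $i$ edges. For this to happen, the $M-m$ edges of $\beta$ not inside $\alpha\cap\beta$ (about $tk$ of them) must number exactly the edges of $\alpha$ removed with $\alpha\setminus\beta$. I would sum over choices of $\alpha\setminus\beta$ by the degrees of its vertices in $G[\alpha]$. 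Typically those degrees are $O(i/k)$, so the new edges must number $d=O(t\,i/k)$, which has probability at most $\binom{tk}{d}2^{-tk}\le k^{O(ti/k)}2^{-tk}$. With $(i/k)\log k=o(\log n)$ and $2^{k}=n^{2+o(1)}$, this is $n^{-2t+o(t)}$. Against the $\binom{k}{t}\binom{n-k}{t}\le (kn)^t$ choices of $\beta$, each level contributes $\E[Z_{k,i}]\,n^{-t+o(t)}$, and summing over $t$ gives $n^{-1+o(1)}\E[Z_{k,i}]$. Atypical vertex sets of $\alpha$ with unusually large removed degree need a separate count: weighting by the number of $i$-edge graphs with a set of $t$ vertices of total degree $D$ gives an extra factor $\binom{tk}{D}\binom{M}{i-D}/\binom Mi\le (tk\,i/M)^D/D!$, which I expect to be summable with the same exponent.

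Adding the three ranges yields $d_\TV\le n^{-1+o(1)}(\E[Z_{k,i}]^2+\E[Z_{k,i}])$, the bound stated in \cref{lem:Poisson-approx-Z}. The delicate point is the large-overlap range, namely verifying that $k^{O(ti/k)}$ stays $n^{o(t)}$ uniformly, including the atypical-degree configurations.
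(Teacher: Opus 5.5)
Your proof is correct, and in the correlated sum it takes a genuinely different route from the paper's. The paper does not split by overlap at all. It bounds $\E[X_\alpha X_\beta]\le n^{o(1)}2^{-2\binom k2+\binom j2}$ for every $2\le j\le k-1$: there are at most $2i$ edges among the pairs inside $\alpha\cup\beta$, which costs only $n^{o(1)}$ because $\binom{\binom k2}{i}=n^{o(1)}$. It then quotes \cref{fact:Y-k-Delta}, whose $C_{k-1}$ term directly gives $n^{-1+o(1)}\E[Y_k]\le n^{-1+o(1)}\E[Z_{k,i}]$.

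That one-line estimate needs $i=o(\log n/\log\log n)$. The lemma is only applied with $i\le r+1$, so the intended hypothesis is presumably $i=O(\log\log n)$, and in that range your ``main obstacle'' disappears. Your Vandermonde bound $\E[X_\alpha X_\beta]\le p^2 2^{\binom j2}$ and your conditioning on $G[\alpha]$ for large overlaps avoid paying the factor $\binom Mi$. (The Vandermonde bound does not even need $i\le M/2$: $\binom{M-m}{i-\ell}\le\binom Mi$ because it is a single term of the Vandermonde sum.) So your argument proves the lemma for all $i=o(\log^2 n)$. This includes the nontrivial window $\log n/\log\log n\ll i=O(\log n)$, where the paper's estimate fails. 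The price is the extra degree analysis.

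The step you left as an expectation closes directly, with no typical/atypical split. Given $X_\alpha=1$, the number $D_T$ of edges of $G[\alpha]$ meeting $T$ is hypergeometric. So with $N_t=t(k-t)+\binom t2\le tk$ and $i\le M/2$,
\[
\E\Bigl[\binom{N_t}{D_T}\Bigm| X_\alpha=1\Bigr]=\sum_{D}\binom{N_t}{D}^2\frac{\binom{M-N_t}{i-D}}{\binom Mi}\le\sum_{D}\frac{(8t^2 i)^D}{(D!)^2}\le e^{2t\sqrt{8i}},
\]
which is $n^{o(t)}$ since $i=o(\log^2 n)$. In the exponent, use $N_t$ rather than $tk$. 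For $t\le k/4$ one has $2^{-N_t}\le n^{-7t/4+o(t)}$, so level $t$ contributes $\E[Z_{k,i}]\,n^{-3t/4+o(t)}$. The $t=1$ term, $n^{-1+o(1)}\E[Z_{k,i}]$, dominates.

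Finally, your side remark that the statement genuinely fails for $i$ of order $k^2$ is mistaken. For such $i\le M/2$, $\E[Z_{k,i}]\ge\E[Y_k]\,2^{\Omega(\log^2 n)}$ is superpolynomially large, since $\E[Y_k]\ge 2^{-O(\log n\log\log n)}$ for $k=2\log_2 n+O(\log\log n)$. Hence $n^{-1}\E[Z_{k,i}]^2$ swamps both $d_{\TV}$ and your correlation sum of order $\sqrt{k}\,n\,\E[Z_{k,i}]$. The case $i>M/2$ reduces to $M-i$ by complementation. The restriction on $i$ comes from the proof method, not from any failure of the statement.
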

\begin{proof}
    Let $\lambda = \E[Z_{k,i}] = \E[Y_k] n^{o(1)}$.  Set $\cA = \binom{[n]}{k}$ to be the collection of $k$ sets of $[n]$ and for each $\alpha \in \cA$ set $X_\alpha$ to be the indicator that $\alpha$ has exactly $i$ edges; note that $Z_{k,i} = \sum_{\alpha \in \cA} X_\alpha$.  Further, note that $p_\alpha$ is constant over all $\alpha$ and we have $p_\alpha  = \lambda / \binom{n}{k} = \binom{\binom{k}{2}}{i} 2^{-\binom{k}{2}}\,.$   Note that we have    $\binom{\binom{k}{2}}{i} = n^{o(1)}$.
    For each $\alpha$, we have that $B_\alpha$ is the collection of sets that overlap with $\alpha$ in at least $2$ vertices.  We can then bound \begin{align*}
        \sum_{\alpha \in \cA} \sum_{\beta \in B_\alpha \setminus \alpha} \E[X_\alpha X_\beta] \leq  n^{o(1)} \binom{n}{k} \sum_{j = 2}^{k-1} \binom{k}{j} \binom{n-k}{k-j} 2^{-2\binom{k}{2} + \binom{j}{2}} \leq n^{-1 + o(1)} ( \E [Z_{k,i}]^2 + \E [Z_{k,i}])
    \end{align*}
    by Fact \ref{fact:Y-k-Delta}.  A similar argument shows $$\sum_{\alpha \in \cA} \sum_{\beta \in B_\alpha}p_\alpha p_\beta  \leq n^{-1 + o(1)} ( \E [Z_{k,i}]^2 + \E [Z_{k,i}])$$ and so Theorem \ref{thm:stein-chen} completes the proof.
\end{proof}

\subsection{Janson's inequality}
We make use of not only the classical Janson's inequality but also a
generalization due to Riordan and Warnke \cite{riordan2015janson}. The set-up for our application
of these inequalities is as follows.

Let $S$ be a finite set, consider the probability space on
the $ \Omega = \{0,1\}^S$ given by the product measure, and let $ \cI$ be
the collection of all decreasing events in $ \Omega$. This implies that for all $A \cap B \in \mathcal{I}$, we have $\PP(A \cap B) \geq \PP(A)\PP(B)$, $A\cup B \in \mathcal{I}$, and $A\cup B \in \mathcal{I}$.  Given 
$A_1, \dots, A_k \in \cI$ we let $ I_i$ be the indicator random variable
for the event $A_i$ and set
\[ X = \sum_{i=1}^k I_i \ \ \ \text{ and } \ \ 
\Delta = \sum_{i}\sum_{j \sim i} \PP( A_i \cap A_j)\]
where we write $ i \sim j$ if $ i\neq j$ and $A_i$ and $ A_j$ are dependent.

\begin{theorem}[Janson; Riordan and Warnke] \label{thm:janson}
Under the conditions above, for any $0 \le t \le \mu$ we have
\[ \PP(X \le \EE[X] - t ) \le e^{- \frac{ t^2}{  2(\mu+  \Delta)}}.\]
\end{theorem}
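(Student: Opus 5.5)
We prove the lower-tail bound by the exponential moment (Laplace transform) method, following Boppana--Spencer and Riordan--Warnke, where $\mu=\EE[X]=\sum_i p_i$ with $p_i=\PP(A_i)$. We use the standard structural setting behind the dependency relation: each $A_i$ is determined by the coordinates in some $S_i\subseteq S$, and $i\sim j$ iff $i \neq j$ and $S_i\cap S_j\neq\emptyset$. This is an assumption not written into the set-up above, but it is the setting of \cite{riordan2015janson}; pairwise dependence alone would not suffice for the conditioning step below.

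Set $\psi(s)=\EE[e^{-sX}]$ for $s\ge 0$. Then
\[ -\frac{d}{ds}\log\psi(s)=\frac{\sum_i \EE[I_i e^{-sX}]}{\psi(s)}. \]
For a fixed $i$, write $X=Y_i+Z_i$, where $Y_i=I_i+\sum_{j\sim i}I_j$ and $Z_i=\sum_{j\not\sim i,\, j\neq i} I_j$.

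\textbf{Step 1: lower bound on $\EE[I_ie^{-sX}]$.} The key claim is
\[ \EE[I_i e^{-sX}]\ \ge\ p_i\,\psi(s)\,\exp\bigl(-s\,\EE[Y_i\mid A_i]\bigr). \]
\begin{itemize}
\item Condition on the coordinates $\omega_{S_i}$, restricted to outcomes where $A_i$ holds. Under this conditioning the remaining coordinates are still a product measure.
\item Each $I_j$ is decreasing, so both $e^{-sY_i}$ and $e^{-sZ_i}$ are increasing functions of the remaining coordinates. Harris's inequality therefore gives
\[ \EE[e^{-sY_i}e^{-sZ_i}\mid \omega_{S_i}]\ \ge\ \EE[e^{-sY_i}\mid\omega_{S_i}]\;\EE[e^{-sZ_i}\mid\omega_{S_i}]. \]
\item $Z_i$ does not depend on $\omega_{S_i}$, so $\EE[e^{-sZ_i}\mid\omega_{S_i}]=\EE[e^{-sZ_i}]$. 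Since $Z_i\le X$, this is at least $\psi(s)$.
\item Averaging over $\omega_{S_i}\in A_i$ and applying Jensen's inequality to the convex map $y\mapsto e^{-sy}$ gives $\EE[e^{-sY_i}\mid A_i]\ge e^{-s\EE[Y_i\mid A_i]}$.
\end{itemize}
Multiplying by $p_i$ proves the claim.

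\textbf{Step 2: differential inequality.} Summing the claim over $i$ and applying Jensen once more, now with weights $p_i/\mu$, gives
\[ -(\log\psi)'(s)\ \ge\ \mu\exp\Bigl(-\frac{s}{\mu}\sum_i p_i\EE[Y_i\mid A_i]\Bigr). \]
Since $p_i\EE[Y_i\mid A_i]=p_i+\sum_{j\sim i}\PP(A_i\cap A_j)$, the sum in the exponent equals $\mu+\Delta$. Integrating from $0$, with $\psi(0)=1$, yields
\[ \log\psi(s)\le -\frac{\mu^2}{\mu+\Delta}\Bigl(1-e^{-s(\mu+\Delta)/\mu}\Bigr). \]

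\textbf{Step 3: Markov and optimisation.} By Markov's inequality,
\[ \PP(X\le\mu-t)\le e^{s(\mu-t)}\psi(s). \]
Put $x=s(\mu+\Delta)/\mu$ and use $1-e^{-x}\ge x-x^2/2$. The exponent is then at most
\[ \frac{\mu^2}{\mu+\Delta}\Bigl(-\frac{xt}{\mu}+\frac{x^2}{2}\Bigr)+sDelta\cdot 0, \]
that is, at most $\frac{\mu^2}{\mu+\Delta}\bigl(-\frac{xt}{\mu}+\frac{x^2}{2}\bigr)$; the terms $sT$ with $T=\mu$ cancel exactly against $s\mu$ from the expansion, since $\frac{\mu^2}{\mu+\Delta}\cdot x = s\mu$. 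Choosing $x=t/\mu$, which lies in $[0,1]$ because $t\le\mu$, gives the exponent $-t^2/(2(\mu+\Delta))$.

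The main obstacle is Step 1 for general decreasing events, which need not be monotone in a single principal way. The Harris inequality must be applied under the conditional measure, and this works only because conditioning on the coordinates in $S_i$ leaves a product measure on the rest, while $Z_i$ is independent of $\omega_{S_i}$. The remaining steps are routine calculus.
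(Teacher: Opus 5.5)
You have not proved the statement as written; there is a gap. The theorem defines $i\sim j$ by \emph{dependence} of $A_i$ and $A_j$. You instead impose the relation $S_i\cap S_j\neq\emptyset$ and claim that dependence alone would not suffice for Step 1. Disjoint determining sets imply independence, so your relation contains the dependence relation. Your $\Delta$ is therefore in general at least the paper's, and the bound you obtain is weaker. What Step 1 actually needs is that $A_j$ is determined by coordinates outside $S_i$ whenever $A_j$ and $A_i$ are independent. You have not shown this; you have assumed it.

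The rest is sound. Conditioning on $\omega_{S_i}$ rather than on $A_i$, so that Harris is applied under a product measure, then Jensen twice, integration and Markov, is the standard exponential-moment proof, and the algebra checks out apart from the garbled remark about $sT$ in Step 3. Also, $1-e^{-x}\ge x-x^2/2$ holds for all $x\ge 0$, so $t\le\mu$ is not needed there. The paper gives no proof of its own; it simply cites Janson and Riordan--Warnke.

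The missing idea is that for monotone events this is automatic, contrary to your claim. First freeze any coordinates with $p_e\in\{0,1\}$. Let $S_i$ be the set of coordinates on which $A_i$ actually depends, i.e.\ some pair of outcomes differing only at $e$ is split by $A_i$. Suppose $e\in S_i\cap S_j$ and write $A=A_i$, $B=A_j$. Let $A^{(0)}\supseteq A^{(1)}$ and $B^{(0)}\supseteq B^{(1)}$ be the sections at $\omega_e=0,1$; the inclusions hold because the events are decreasing. Harris on the remaining coordinates, followed by the two-point computation, gives
\[ \PP(A\cap B)-\PP(A)\PP(B)\ \ge\ p_e(1-p_e)\bigl(\PP(A^{(0)})-\PP(A^{(1)})\bigr)\bigl(\PP(B^{(0)})-\PP(B^{(1)})\bigr)\ >\ 0. \]
Both differences are positive because $e$ is relevant to each event and every remaining outcome has positive probability. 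So independent decreasing events have disjoint supports, and the two relations coincide. With this lemma added, your argument gives exactly the stated inequality.
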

\noindent Note that we recover Janson's inequality by specifying a collection of sets $ B_1, B_2 \dots, B_k \subset \Omega$ and letting the event $A_i$ be the event no element of $B_i$ appears.

\section{The lower bound: covering defects with independent sets} \label{sec:LB}

Our goal is to prove  the following that gives the lower bound for \cref{thm:poisson-version}. Recall that $\mu_j = \lfloor (r-j)/j \rfloor$ and $ \xi_j = (\mu_j +2)j - r$. 
\begin{lemma}\label{lem:new-lower}
    Let $n$ be sufficiently large and suppose $k = 2\log_2 n - 2 \log_2 \log n + O(1), r = o(\log\log n)$.  
    If $ 0 < j \le r$ 
    then $$\PP(X \geq kr + j) \geq \PP(Z_{k+1,\mu_{j}} \geq \xi_j) - \frac{1}{\log n} \,.$$
\end{lemma}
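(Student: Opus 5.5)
The plan is to exhibit, on the event $\{Z_{k+1,\mu_j}\ge \xi_j\}$, an explicit $K_{r+1}$-free structure on $kr+j$ vertices. Fix $j$ and write $\mu=\mu_j$, $\xi=\xi_j$. Take:
\begin{itemize}
\item $\xi$ disjoint $(k+1)$-sets $S_1,\dots,S_\xi$, each with exactly $\mu$ edges (defects);
\item $j-\xi$ further disjoint $(k+1)$-sets $T_1,\dots,T_{j-\xi}$, each with exactly $\mu+1$ defects;
\item $\xi\mu+(j-\xi)(\mu+1)=j(\mu+2)-\xi-j=r-j$ disjoint independent $k$-sets $A_e$, one for each defect $e=uv$, where $A_e$ covers $e$, i.e.\ no vertex of $A_e$ is adjacent to both $u$ and $v$.
\end{itemize}
All $r$ sets are pairwise disjoint, and there are $k(r-j)+(k+1)j=kr+j$ vertices in total.

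\textbf{Deterministic step.} Suppose in addition that the defects inside each $(k+1)$-set form a matching. Then a clique meets each $(k+1)$-set in at most two vertices, and in two only if it uses a defect $e$ of that set. In that case the clique contains no vertex of $A_e$, since any such vertex would be adjacent to both endpoints of $e$. Each independent $A_e$ contributes at most one vertex, and each $A_e$ is charged to only one defect. So the clique has at most one vertex per set on average, hence at most $r$ vertices, and the union is $K_{r+1}$-free.

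\textbf{Probabilistic steps.} These come in the following order.
\begin{enumerate}
\item \emph{Reduction to a non-tiny mean.} Let $\lambda=\E[Z_{k+1,\mu}]$. If $\lambda\le 1/(2\log n)$, Markov gives $\PP(Z_{k+1,\mu}\ge\xi)\le\lambda$, so the lemma is trivial. Hence we may assume $\lambda\ge 1/(2\log n)$.
\item \emph{Consequences of \cref{fact:numerical}.} Since $\mu\le r=o(\log\log n)$, we get $\E[Y_{k+1}]\ge (\log n)^{-O(r)}=n^{-o(1)}$. Then $\E[Y_k]\ge n^{1-o(1)}$. We also get $\E[Z_{k+1,\mu+1}]=k^{2-o(1)}\lambda\to\infty$.
\item \emph{Many candidate $T$-sets.} Using \cref{lem:Poisson-approx-Z} when the mean is at most $n^{1/4}$, and a second moment via \cref{fact:Y-k-Delta} otherwise, w.h.p.\ there are at least $\log n$ sets with $\mu+1$ defects. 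The number of $(k+1)$-sets with $\le r+1$ edges whose edges do not form a matching has expectation smaller by a factor $O(r^2/k)$. By Markov, with failure probability $o(1/\log n)$, non-matching sets are a negligible share of all these families, and for $S_i$ they simply do not appear.
\item \emph{Disjointness of the $T$'s from each other and from the $S_i$.} Two sets with few defects overlapping in at least $2$ vertices are rare, by the overlap sum of \cref{fact:Y-k-Delta}. So we can select the $T$'s disjoint from each other and from $S_1,\dots,S_\xi$.
\end{enumerate}

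\textbf{Main obstacle: choosing the covers.} The difficulty is choosing the covers $A_e$ disjoint from everything else despite conditioning on the $S_i$. To avoid conditioning altogether, I would prove a uniform statement.
\begin{itemize}
\item \emph{Many covers for every pair.} W.h.p., for every pair $u,v$, the number of independent $k$-sets in $[n]\setminus\{u,v\}$ covering $uv$ is at least $n^{c}$ for some $c>0$. The expectation is about $\E[Y_k](3/4)^k\ge n^{1-2\log_2(4/3)-o(1)}\approx n^{0.17}$. Covering and independence are both decreasing events in the edge indicators, so \cref{thm:janson} applies, with $\Delta$ controlled by \cref{fact:Y-k-Delta}. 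This gives a lower tail of $\exp(-n^{c'})$, which survives a union bound over the $n^2$ pairs.
\item \emph{No vertex in too many covers.} W.h.p., no vertex lies in more than $n^{c/2}$ of these covering sets. This is a first moment computation, since sets through a fixed vertex lose a factor of about $n^{-1}$ relative to all sets.
\end{itemize}
Given both properties, deleting the $O(rk)$ vertices already used destroys at most $O(rk)\,n^{c/2}\ll n^c$ covering sets for any pair. So we can pick the $r-j$ covers greedily, disjoint from everything chosen so far.

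Summing the failure probabilities, $o(1/\log n)$ in steps 3--4 and $n^{-\omega(1)}$ in the cover step, yields $\PP(X\ge kr+j)\ge \PP(Z_{k+1,\mu}\ge\xi)-1/\log n$.
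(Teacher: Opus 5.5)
\textbf{The gap is the matching hypothesis on the $S_i$.} Your deterministic step needs the defects inside each $(k+1)$-set to form a matching. You pay for this in Step~3 by asserting that $(k+1)$-sets with exactly $\mu=\mu_j$ defects that do \emph{not} form a matching ``simply do not appear'' outside an event of probability $o(1/\log n)$. That is false in the regime that matters.

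A uniformly random $\mu$-edge graph on $k+1$ vertices fails to be a matching with probability about $2\mu(\mu-1)/k$. So for $\lambda=\E[Z_{k+1,\mu}]\asymp 1$ and $\mu\ge 2$, such sets appear with probability of order $\mu^2/\log n$, and this is the true order, not an artefact of Markov.

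It is also a real loss for your construction. By joint Poisson approximation, with $\xi_j=1$, the event that the only $\mu$-defect set present is a non-matching one has probability about $\lambda e^{-\lambda}\cdot 2\mu(\mu-1)/k$. On that event $Z_{k+1,\mu}\ge\xi_j$ holds, but you have no admissible $S_1$. Sets with $\mu+1$ defects cannot substitute, since then more than $r-j$ defects would need covers. At $\lambda=1$ this probability already exceeds $1/\log n$ for $\mu=3$ (e.g.\ $r=4$, $j=1$, where $\mu_1=3$, $\xi_1=1$). It is $\omega(1/\log n)$ if $\mu\to\infty$, which $r=o(\log\log n)$ permits.

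The repair is to drop the matching hypothesis, as the paper does. If a clique $K$ meets the $i$-th $(k+1)$-set in $k_i$ vertices, it contains $\binom{k_i}{2}$ distinct defects, so it avoids each of their distinct covering $k$-sets. Hence $|K|\le (r-j)+\sum_{i=1}^{j}\bigl(k_i-\binom{k_i}{2}\bigr)\le r$, because $k_i-\binom{k_i}{2}\le 1$ for every integer $k_i\ge 0$. This handles an arbitrary defect graph, so no matching event is needed.

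\textbf{Two smaller points.} First, you never truncate $\lambda$ from above. Step~4 is disjointness via the overlap sum of \cref{fact:Y-k-Delta}, which must include single-vertex overlaps, not only overlaps of at least two vertices. That step is only effective when the relevant means are $n^{o(1)}$. The paper first disposes of $\lambda\ge\log^{4r}n$: then $\E[Y_{k+1}]\ge\log^4 n$, and after restricting to a vertex subset one finds $r$ disjoint independent $(k+1)$-sets whp. Also, at the bottom of the range $\E[Z_{k+1,\mu+1}]$ is only of order $\log n/r$, not $\log n$. You only need about $2r$ such sets, so this is harmless.

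Second, your uniform Janson bound for covers is a valid alternative to the paper's route. The paper instead conditions on each of the $n^{o(1)}$ candidate $(k+1)$-sets, uses Chebyshev for each of its defects, and uses a first moment to show that all covers of such a defect are pairwise disjoint. However, ``no vertex lies in more than $n^{c/2}$ covers'' is not a first-moment consequence. The expected number of independent $k$-sets through a fixed vertex is not polynomially small, so Markov plus a union bound over vertices fails. Within your framework, the clean fix is to apply Janson to the covers of $uv$ avoiding an arbitrary set $W$ with $|W|\le(r+1)(k+1)$. The mean is essentially unchanged, and the failure probability $\exp(-n^{c'})$ absorbs a union bound over all $n^{O(rk)}=e^{O(r\log^2 n)}$ triples $(u,v,W)$. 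Greedy selection of disjoint covers is then immediate.
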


We begin by noting that we may assume
\begin{equation}
\label{eq:newcondition}
\frac{1}{ 2 \log n} < \E[Z_{k+1, \mu_j}] < \log^{4r} n
\end{equation}
Indeed, if $ \EE[Z_{k+1, \mu_j}] \leq (2\log n)^{-1}$ then, by \cref{lem:Poisson-approx-Z} we have 
\[ \PP( Z_{k+1, \mu_j} \ge \xi_j) \le \PP(Z_{k+1, \mu_j} \ge 1) \le  n^{-1+o(1)} + \left(1 - e^{-\E[ Z_{k+1, \mu_j}]} \right)  \le \frac{1}{\log n}, \]
and the conclusion of \cref{lem:new-lower} is trivial. To see
the upper bound, note that we have
\begin{equation}
\label{eq:Zratio}
\frac{ \E[ Z_{k+1, i+1}] }{ \E[ Z_{k+1, i}]} = \frac{ \binom{k}{2}- i }{ i+1} \le \log^4 n . 
\end{equation}
It follows that if $ \EE[Z_{k+1, \mu_j}] \ge \log^{4r} n$
then $\E[ Z_{k+1,0}] = \E[Y_{k+1}] > \log^{4} n$. We may assume 
$ \E[ Y_{k+1}] \le n^{1/4}$ by simply restricting our attention to a subset of the vertex set. Then \cref{lem:Poisson-approx-Z} implies that it is likely that there are at $r$ independent sets of size $k+1$, and a first moment calculation using \cref{fact:Y-k-Delta} shows that these independent sets are disjoint whp. So
we have a $ K_{r+1}$-free set by taking the union of $r$ pairwise-disjoint independent sets of
size $k+1$; to be precise, we have
\[ \PP( X \ge kr+j) \ge \PP( X \ge (k+1)r) \ge  1 - n^{-1/2+ o(1)}. \]
As $ \E[ Z_{k+1,i}] \ge \log^4 n =: \lambda$ implies
\[  \PP( Z_{k+1,i} \ge r) \ge 1 - \sum_{\ell = 0}^{r-1} e^{-\lambda} \frac{ \lambda^\ell}{ \ell!} \ge 1 - e^{ -( \log n)^2 + O( (\log \log n)^2)},  \]
this bound suffices to establish the conclusion of \cref{lem:new-lower} in this
case. 

Note that assumption (\ref{eq:newcondition}) implies
\[ \E[ Y_k] = n^{1 + o(1)}. \]

Now, 
for each $k$ and $i$, define $\cZ_{k,i}$ to be the collection of $k$-sets of $[n]$ that induced exactly  $i$ edges so that $Z_{k,i} = |\cZ_{k,i}|$.  Similarly define $\cY_k = \cZ_{k,0}$ to be the collection of independent sets of size $k$. 
We will show that elements of $\cZ_{k,i} \cup \cZ_{k,i+1}$ are pairwise disjoint in the relevant regime. Define the ``bad'' event $\cB_1$ via $$\cB_1 = \{\exists~S_1,S_2 \in \cZ_{k+1,i} \cup \cZ_{k+1,i+1}, S_1 \neq S_2, S_1 \cap S_2 \neq \emptyset\}\,.$$

A simple union bound will handle $\PP(\cB_1)$:
\begin{fact}\label{fact:Z_k-disjoint}
    For $k = 2 \log_2 n + O(\log \log n)$ and $i = o(\log n / \log\log n)$ we have $$\PP(\cB_1) \leq n^{-1+o(1)}( \E [Z_{k+1,i}]^2 + \E [Z_{k+1,i}]).$$
\end{fact}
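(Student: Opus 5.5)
We bound $\PP(\cB_1)$ by the expected number of ordered pairs $(S_1,S_2)$ of distinct, intersecting $(k+1)$-sets with $S_1, S_2 \in \cZ_{k+1,i}\cup\cZ_{k+1,i+1}$, and then compare this sum with the sum controlled in \cref{fact:Y-k-Delta}, applied with $k+1$ in place of $k$.

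Fix the overlap size $|S_1\cap S_2| = \ell$ with $1\le \ell\le k$, and let $m$ be the number of edges of $G_{n,1/2}$ inside $S_1\cap S_2$. The two sets together span $2\binom{k+1}{2}-\binom{\ell}{2}$ distinct vertex pairs. The event requires that at most $i+1$ of these pairs inside $S_1$ are edges, and likewise inside $S_2$, so at most $2i+2$ of the union's pairs are edges. Hence
\[
\PP(S_1,S_2\in \cZ_{k+1,i}\cup\cZ_{k+1,i+1}) \le \binom{2\binom{k+1}{2}}{\le 2i+2}\, 2^{-2\binom{k+1}{2}+\binom{\ell}{2}}.
\]
Since $i=o(\log n/\log\log n)$, we have $\binom{k^2}{\le 2i+2} = k^{O(i)} = n^{o(1)}$, so this prefactor is absorbed into an $n^{o(1)}$ loss. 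Summing over positions, $\binom{n}{k+1}\binom{k+1}{\ell}\binom{n-k-1}{k+1-\ell}$ counts the pairs with overlap $\ell$. Summing over $\ell\in[1,k]$ and applying \cref{fact:Y-k-Delta} with $k+1$ gives
\[
\PP(\cB_1)\le n^{-1+o(1)}\bigl(\E[Y_{k+1}]^2+\E[Y_{k+1}]\bigr).
\]

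It remains to replace $Y_{k+1}$ by $Z_{k+1,i}$. By \cref{fact:numerical}, $\E[Z_{k+1,i}] = k^{2i(1+o(1))}\E[Y_{k+1}]$, and $k^{2i} = n^{o(1)}$, so $\E[Y_{k+1}]\le \E[Z_{k+1,i}]$ and the two expectations agree up to $n^{o(1)}$. (Strictly, \cref{fact:numerical} is stated for $i = O(\log\log n)$, but the binomial estimate $\binom{\binom{k+1}{2}}{i}\ge 1$ used here holds for any $i$.) This yields the claimed bound. One could equally keep the count sharper by conditioning on $m$, but that is unnecessary.

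The main obstacle is making sure the overlap terms with small $\ell$, where the exponent $\binom{\ell}{2}$ is tiny, are handled uniformly with the $n^{o(1)}$ prefactor. \cref{fact:Y-k-Delta} already covers all $1\le \ell\le k$, including $\ell = 1$, so this is the key point. The only care needed is that the prefactor does not depend on $\ell$, which our crude bound $\binom{2\binom{k+1}{2}}{\le 2i+2}$ guarantees.
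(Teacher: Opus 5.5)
Your proof is correct and follows the same route as the paper. You take a union bound over pairs of distinct intersecting $(k+1)$-sets, absorb the $k^{O(i)}=n^{o(1)}$ cost of placing the at most $2i+2$ edges, apply \cref{fact:Y-k-Delta} with $k+1$ in place of $k$, and finish with $\E[Y_{k+1}]\le \E[Z_{k+1,i}]$. The only difference is that you write out the per-pair probability bound and the final comparison of expectations, which the paper leaves implicit.
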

\begin{proof}
    As $i= o(\log n /\log\log n)$, we have $k^i=n^{o(1)}$. We union bound over all possible choices and use  \cref{fact:Y-k-Delta} to bound \begin{align*}
    \PP(\cB_1) &\leq n^{o(1)}  \binom{n}{k+1} \sum_{j = 1}^{k} \binom{k+1}{j} \binom{n-k-1}{k+1-j} 2^{-2\binom{k+1}{2} + \binom{j}{2}} \\
    &\leq n^{-1 + o(1)} ( \E [Z_{k+1,i}]^2 + \E [Z_{k+1,i}])\,. \qedhere
    \end{align*}
\end{proof}

For $S \in \cZ_{k,i}$ let $\cD(S)$ denote the set of defects in $S$.  Recall that an independent set $T \in \cY_k$ \emph{covers} an edge $e$ if there is no triangle consisting of a vertex in $T$ together with $e$.  
We prove that each defect is covered by many independent sets of size $k$. Set $\cC(e) = \{T \in \cY_k : T \text{ covers }e\}$.  Define the bad events $\cB_2$ and $\cB_3$ via \begin{gather*}
    \cB_2 := \left\{\exists~S \in \cZ_{k+1,i} \cup \cZ_{k+1,i+1}, e \in \cD(S) : |\cC(e)| \leq (3/4)^k \E [Y_k] / 2 \right\} \\
    \cB_3 := \left\{\exists~S \in \cZ_{k+1,i} \cup \cZ_{k+1, i+1}, e \in \cD(S), T_1, T_2 \in \cC(e) : T_1 \cap T_2 \neq \emptyset, T_1 \neq T_2\right\}\,.
\end{gather*}

\begin{lemma}\label{lem:large-covers}
    Suppose that $\E [Y_k] \in [n^{5/6},n^{6/5}]$.  Then for $i = O(\log\log n)$ we have 
    $$\PP(\cB_2 ) \leq  n^{-1/100}\quad \text{ and }\quad  \PP(\cB_3) \leq n^{-1/100}\,.$$
\end{lemma}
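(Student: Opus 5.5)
The plan is to condition on the set $S$ and the defect $e$, and then control the number of covering independent $k$-sets with Janson's inequality (\cref{thm:janson}). The bad behaviour of $\cC(e)$ is then union bounded over all $S \in \cZ_{k+1,i}\cup\cZ_{k+1,i+1}$ and $e\in\cD(S)$.

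Since $\E[Z_{k+1,i}] \le \E[Y_{k+1}]\,k^{O(\log\log n)} = n^{o(1)}\E[Y_k]/n \le n^{1/5+o(1)}$ by \cref{fact:numerical}, and each $S$ has at most $i+1=O(\log\log n)$ defects, the expected number of pairs $(S,e)$ is at most $n^{1/5+o(1)}$. The goal is therefore to show two bounds for a fixed pair $(S,e)$ with $e=uv$:
\[
\PP\Bigl(|\cC(e)| \le (3/4)^k\E[Y_k]/2 \;\Big|\; S\in\cZ_{k+1,\cdot},\ e\in\cD(S)\Bigr)\le n^{-1/4},
\]
and that the conditional probability of two distinct intersecting covers is at most $n^{-1/4}$. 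In fact I expect much stronger bounds, with exponent $-\Omega(\log n)$ for the first and $n^{-c}$ for the second.

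\textbf{Setting up the conditioning.} Fixing $S$ and its induced edge set only exposes edges inside $S$. I would restrict attention to $k$-sets $T\subseteq [n]\setminus S$. The event ``$T$ is independent and covers $e$'' is then a decreasing event in the edges not inside $S$. It requires that no edge lies inside $T$ and that for each $w\in T$ not both $uw$ and $vw$ are present. Its probability is $2^{-\binom k2}(3/4)^k$. Summing over $T$, the mean of the restricted count is $\binom{n-k-1}{k}2^{-\binom k2}(3/4)^k=(1-o(1))(3/4)^k\E[Y_k]$. This is at least $n^{5/6-0.415\cdot 2+o(1)}=n^{\Omega(1)}$, since $k\approx 2\log_2 n$ gives $(3/4)^k=n^{-0.83}$ and $\E[Y_k]\ge n^{5/6}$. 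The hypothesis $\E[Y_k]\ge n^{5/6}$ is used exactly here to make this mean polynomially large.

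\textbf{Bounding $\PP(\cB_2)$.} I would apply \cref{thm:janson} with $t=\mu/2$ and need $\Delta=o(\mu^2)$ up to $n^{-\Omega(1)}$. The quantity $\Delta$ sums over pairs $T_1,T_2$ sharing $j\ge2$ vertices, where $j=1$ is independent because the pair edges to $u,v$ are distinct. Each such term is at most $2^{-2\binom k2+\binom j2}(3/4)^{2k-j}$. By \cref{fact:Y-k-Delta} this gives $\Delta\le (4/3)^{k}(3/4)^{2k}\,n^{-1+o(1)}(\E[Y_k]^2+\E[Y_k])$. The extra $(4/3)^j\le(4/3)^k$ factor is crude, so I would instead redo the ratio argument of \cref{fact:Y-k-Delta} with the factor $(4/3)^j$ included. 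The ratio test still shows domination by the endpoints $j=2$ and $j=k-1$. This yields $\Delta\le n^{-1+o(1)}(\mu^2+\mu)$, and hence a failure probability of $\exp(-n^{\Omega(1)})$, which easily survives the union bound.

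\textbf{Bounding $\PP(\cB_3)$.} Here a first moment suffices. Count triples $(S,e,T_1,T_2)$ with $T_1\ne T_2$ intersecting covers. The expected number is at most
\[
\E[\#(S,e)]\cdot\sum_{j\ge1}\binom nk\binom kj\binom{n-k}{k-j}2^{-2\binom k2+\binom j2}(3/4)^{2k-j},
\]
and by the same modified \cref{fact:Y-k-Delta} estimate this is at most $n^{1/5+o(1)}\cdot n^{-1+o(1)}(\mu^2+\mu)$. The upper bound $\E[Y_k]\le n^{6/5}$ gives $\mu\le n^{6/5-0.83}= n^{0.37}$, so the total is $n^{-0.06+o(1)}\le n^{-1/100}$.

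The covers $T$ meeting $S$ must also be handled in both bounds. There are only $n^{-1+o(1)}$-fraction as many of them, so they can be absorbed by bounding their count in expectation.

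\textbf{Main obstacle.} The delicate point is the exponent bookkeeping in the $\cB_3$ step and the $j=k-1$ endpoint of $\Delta$, where the $(4/3)^{j}$ factor is largest. I would check carefully that the $j=k-1$ term $k(n-k)2^{\binom{k-1}2}(3/4)^{k+1}$ is still $n^{-1+o(1)}\mu$, which holds because it equals $\mu\cdot kn2^{-(k-1)}\cdot O(1)=\mu n^{-1+o(1)}$.
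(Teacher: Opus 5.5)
Your proposal is correct. For $\cB_2$ it uses a different tool from the paper, and for $\cB_3$ it is the paper's argument.

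\textbf{$\cB_3$.} You take a first-moment count of $(S,e,T_1,T_2)$ with $T_1,T_2$ disjoint from $S$, weighted by $(3/4)^{2k-j}$ in the estimate of \cref{fact:Y-k-Delta}. This gives $n^{-1+o(1)}\E[Y_k]\cdot n^{-1+o(1)}(\mu^2+\mu)\le n^{-0.06+o(1)}$, exactly as in the paper.

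\textbf{$\cB_2$, comparison of routes.} The paper bounds $\mathrm{Var}[X_S]$ and applies Chebyshev; you use the Janson/Riordan--Warnke lower tail. Chebyshev is more elementary and reuses the same overlap sum as $\cB_3$. However, it only gives a per-defect failure probability $O(1/\mu+n^{-1+o(1)})$. At the bottom of the range, $\mu=n^{5/6+2\log_2(3/4)+o(1)}\approx n^{0.003}$. Since $0.003<1/99$, the paper's per-defect bound $n^{-1/99}$ does not actually hold there. The Chebyshev route still closes, but only because the number of pairs $(S,e)$, which is $n^{-1+o(1)}\E[Y_k]$, shrinks together with $\mu=n^{-0.83+o(1)}\E[Y_k]$. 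The product is then $n^{-0.17+o(1)}$ across the whole range. Your bound is $\exp(-\Omega(\mu))=\exp(-n^{\Omega(1)})$ per defect, so the union bound over up to $n^{1/5+o(1)}$ pairs needs no such bookkeeping. Your check that $5/6-0.83>0$ is precisely the margin that matters.

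\textbf{Correction 1: the $j=1$ overlaps in $\Delta$.} Covers sharing one vertex are not independent. If $T_1\cap T_2=\{w\}$, both events constrain the pairs $uw$ and $vw$, and
\[
\PP(A_{T_1}\cap A_{T_2})=\tfrac43\,\PP(A_{T_1})\PP(A_{T_2}).
\]
These pairs must therefore be included in $\Delta$. They contribute about $\tfrac{4k^2}{3n}\mu^2=n^{-1+o(1)}\mu^2$, so your bound $\Delta\le n^{-1+o(1)}(\mu^2+\mu)$ and the conclusion stand. As in \cref{fact:Y-k-Delta}, the sum is dominated by $j=1$ and $j=k-1$, not by $j=2$ and $j=k-1$.

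\textbf{Correction 2: covers meeting $S$.} For $\cB_2$ they need no treatment, since extra covers only increase $|\cC(e)|$. For $\cB_3$, however, your claim that they form an $n^{-1+o(1)}$ fraction and can be absorbed is false under the literal definition of covering.
\begin{itemize}
\item An independent $k$-set containing exactly one endpoint $w$ of $e$ covers $e$ vacuously.
\item In expectation there are about $k\E[Y_k]/n$ such sets, which can be as large as $n^{1/5}$ in this range.
\item Any two of them through the same endpoint intersect.
\item Already when $S\in\cZ_{k+1,1}$, the sets $S\setminus\{u\}$ and $S\setminus\{v\}$ are two intersecting covers.
\end{itemize}
These cannot be absorbed. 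The right fix is to define $\cC(e)$ as the covers disjoint from $S$. This is what the paper's computation implicitly does, and it is all that the construction in \cref{lem:new-lower} needs, since covering sets must avoid the $(k+1)$-sets anyway. Covers that meet $S$ only in $S\setminus e$ are indeed negligible, as you say.
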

\begin{proof}
    Let $S \in \cZ_{k+1,i} \cup \cZ_{k+1, i+1}$ and $e \in \cD(S)$.  We begin with a union bound:  \begin{align} \label{unionb3}
        \PP(\cB_2) \leq (i \E[Z_{k+1,i}] + (i+1)\E[ Z_{k+1, i+1}]) \PP_{n-(k+1)}\left( |\cC(e)| \leq (3/4)^k \E[Y'_k] / 2 \right)
    \end{align}
   where we write $\PP_{n-(k+1)}$ for the distribution of the random graph on $ \binom{V}{2} \setminus \binom{S}{2}, e \in S$ and
   $ Y_k'$ is the number of independent sets of size $k$ that do not intersect $S$.

    To handle this probability, we will show by the second moment method that the number of independent sets that cover the disjoint edge $e$ is near its mean.  Define $N = n- (k +1)$.  Set $X_S = | \{ T \in \cC(e) : S \cap T = \emptyset\}|$ and note \begin{equation} \label{eq:E_N[X]}
        \E[ X_S] = (3/4)^k \binom{N}{k} 2^{-\binom{k}{2}} = (1 + o(1)) (3/4)^k \E [Y_k']\,.
    \end{equation}  We also can bound \begin{align*}
        \mathrm{Var}[X_S] - \E[X_S] &\le \binom{N}{k} \sum_{j = 1}^{k-1} \binom{k}{j} \binom{N - k}{k - j}2^{-2\binom{k}{2} + \binom{j}{2}} (3/4)^{2k-j} \\
        &= N^{-1 + o(1)} \left(\E[X_S]^2 + \E[X_S]\right)
    \end{align*}
    where the second line is by adapting \cref{fact:Y-k-Delta} and we recall $\E[X_S] = (1 + o(1))(3/4)^k \E[Y_k']$ by \eqref{eq:E_N[X]}.  By Chebyshev's inequality, we then see \begin{align*}
        \PP(X_S \leq (3/4)^k \E [Y_k'] / 2) \leq (4 + o(1))\frac{\mathrm{Var}[X_S]}{\E [X_S]^2} \leq  4 \left(N^{-1 + o(1)} + \frac{1 + o(1)}{\E[X_S]} \right) \leq n^{-1/99}
    \end{align*}
    where in the last line we used $5/6 + 2\log_2(3/4) \geq 1/99$. Consequently, 
   $$ \PP_{n-(k+1)}\left( |\cC(e)| \leq (3/4)^k \E[Y'_k] / 2 \right)=\PP(X_S \leq (3/4)^k \E [Y_k'] / 2) \leq n^{-1/99}.$$
   Finally, since $\EE[Z_{k+1, i}], \E[ Z_{k+1,i+1}]$, and $i$ are each $n^{o(1)}$, we obtain 
$\PP(\cB_2 ) \leq  n^{-1/100}$ from (\ref{unionb3}).
   
    For the bound on $\cB_3$ we may again union bound to see \begin{align*}
        \PP(\cB_3) &\leq (i \E[Z_{k+1,i}] + (i+1) \E[ Z_{k+1, i+1}]) \binom{N}{k} \sum_{j = 1}^{k-1} \binom{k}{j} \binom{N - k}{k - j}2^{-2\binom{k}{2} + \binom{j}{2}} (3/4)^{2k-j} \\
        &= n^{-1 + o(1)}( i\E[Z_{k+1,i}] + (i+1) \E[ Z_{k+1, i+1}]) \left(\E[X_S]^2 + \E[X_S]\right)\,. 
    \end{align*}
    As $ \E[ Y_k'] = (1+o(1)) \E[Y_k]$, recalling (\ref{eq:E_N[X]}) we have
    $$\E[X_S] = n^{2\log_2(3/4) + o(1)} \E[Y_k] \le n^{2\log_2(3/4) + 6/5+o(1)}
    \le n^{0.38 +o(1)}.
    $$ As $\E[Z_{k+1,i}], \E[Z_{k+1,i+1}]$ are each $n^{-1 + o(1)} \E[Y_k]$ this completes the proof.
\end{proof}

As our structure may include some $(k+1)$-sets with $ \mu_j+1$ defects, we include a fourth bad event $ \cB_4$, which we define to
be the event
$ Z_{k+1, \mu_j+1} < j - \xi_j$. Note that assumption (\ref{eq:newcondition}) and observation (\ref{eq:Zratio}) imply $E[ Z_{k+1, \mu_j +1}] \geq (\log n)/(2r)$. We then apply \cref{lem:Poisson-approx-Z} to conclude 
\begin{equation} \label{eqn:larger sets}
\PP( \cB_4) < n^{-1 +o(1)} + \PP( {\rm Poiss}[(\log n)/(2r)] \le r)  < n^{-1 + o(1)} + e^{ -( \log n)/(2r)} r (\log n)^r < e^{-( \log n)^{1/2}}.
\end{equation}
\begin{proof}[Proof of \cref{lem:new-lower}]  
    Set $\cB = \cB_1 \cup \dots \cup \cB_4$ and note that by \cref{fact:Z_k-disjoint},  
    \cref{lem:large-covers} and \eqref{eqn:larger sets} we have that $\PP(\cB) \leq e^{ -( \log n)^{1/2} +o(1)}$.  
    Define the ``good'' event  $\cG$ to be the event $ Z_{k+1,\mu_{j}} \geq \xi_j$. Then \cref{lem:Poisson-approx-Z} and \cref{fact:numerical} show \begin{equation} \label{eq:lower-bound-Poisson-approx}
    \PP(\cG) \geq \PP(\Poiss(\E [Z_{k+1,\mu_{j_i}}]) \geq {\xi_j}) - n^{-1/3}\,.
    \end{equation} We now note that on the event $\cG \cap \cB^c$, we have at least $\xi_j$  elements of $\cZ_{k+1,\mu_{j}}$
    and at least $j - \xi_j$ elements of $ \cZ_{k+1, \mu_j+1}$ and these sets are pairwise disjoint by event $\cB_1^c$. Let $ A_1, \dots, A_j$ be this collection of $(k+1)$-sets. Each defect in these sets is covered by at least $(3/4)^k \E[Y_k]/2 = n^{1 + 2 \log_2(3/4) + o(1)} \geq n^{3/20}$ elements of $\cY_k$ by $\cB_2^c$; 
    by event $\cB_3^c$, each pair of independent sets that both cover a defect among $\cZ_{k+1,\mu_{j}} \cup \cZ_{k+1, \mu_j+1}$ are disjoint.  We may thus construct a $K_{r+1}$-free graph on $kr+j$ vertices by combining the sets $ A_1, \dots, A_j$ with
    \[ \xi_j\mu_j + (j - \xi_j)(\mu_j+1) = j( \mu_j+1) - \xi_j = r-j \]
    covering independent $k$-sets.  
    
    To see that there is no $K_{r+1}$ among these vertices, suppose $K$ is a set of $r+1$ vertices in this structure and assume for the sake of contradiction that $K$ spans a copy of $K_{r+1}$. For $i=1, \dots, j$ let $|K \cap A_j| = k_j$. As the graph induced by $K$ is complete this implies that $ K \cap A_j$ contains $ \binom{k_j}{2}$ defects. The $k$-sets in our structure that cover these defects are not in $K$. As each $k$-set
    intersects $K$ in at most one vertex, it follows that we 
    have
    \[ r+1 = |K| \le \sum_{i=1}^j k_i + r-j - \sum_{i=1}^j \binom{k_i}{2}
    = r + \sum_{i=1}^j k_i - 1 - \binom{k_i}{2} \le r.\]

We conclude that \begin{equation*}
    \PP(X \geq kr + j) \geq \PP(\cG \cap \cB^c) \geq \PP(\cG) - \PP(\cB) \,. \end{equation*}
    Applying \eqref{eq:lower-bound-Poisson-approx} completes the proof.
\end{proof}

\section{The upper bound: not covering too many defects}

The main goal of this section is to prove the following probabilistic upper bound.

\begin{lemma}\label{lem:new-upper}
    Let $n$ be sufficiently large, $k = 2 \log_2 n + O( \log \log n)$ and $ r = o(\log\log n / \log\log\log n)$.     Then $$\PP(X \geq kr + j) \leq \PP(Z_{k+1,\mu_{j}} \geq \xi_{j}) + (\log n)^{-1/2}\,.$$
\end{lemma}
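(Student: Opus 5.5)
The plan is to bound $\PP(X \ge kr+j)$ by a first-moment (union bound) computation over vertex sets $U$ of size $m = kr+j$ that induce $K_{r+1}$-free graphs. The union is split according to how far $G[U]$ is from $r$-partite, and the structure theory is used to reduce to the event $Z_{k+1,\mu_j}\ge \xi_j$.

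\emph{Step 1 (far from $r$-partite).} Let $U$ be a $K_{r+1}$-free $m$-set such that $G[U]$ requires at least $\delta m^2$ edge deletions (with $\delta$ a small power of $1/\log n$, or a suitable polylog threshold) to become $r$-partite. The Balogh--Samotij container estimate (\cref{lem:far}) shows that the number of such graphs on $m$ labelled vertices is at most $2^{(1-1/r)m^2/2 - c\,\delta m^2}$. Since $\binom{n}{m}2^{-\binom m2}$ multiplied by the number of $r$-partite graphs is only $n^{O(r)}$ at this scale, the extra savings $2^{-c\delta m^2}$ kills the union bound. Here the constraint $r=o(\log\log n/\log\log\log n)$ enters, through the dependence of constants on $r$.

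\emph{Step 2 (near-$r$-partite).} Otherwise $U$ has a partition $U=V_1\cup\dots\cup V_r$ with few total edges inside the parts ($o(m^2)$ defects, eventually polylogarithmically many). A first-moment argument shows that no part has size $\ge k+2$ with few defects, and that parts of size $k+1$ lie in $\cZ_{k+1,d}$ with $d$ bounded by $O(\log n/\log\log n)$. Parts of size $\le k-1$ are too costly: each unit of deficit costs a factor $n^{-1+o(1)}$ that cannot be compensated. By \cref{fact:numerical}, the sizes are therefore $k$ or $k+1$, with exactly $j$ parts of size $k+1$ (more precisely, $\sum(|V_i|-k)=j$, and larger excesses are handled by the same first-moment argument).

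Next comes the combinatorial core. Suppose $V_i$ has size $k+1$ and contains $d_i$ defects. For each defect $e$ and each other part $V_\ell$, the $K_{r+1}$-freeness forces structure: a typical pair of near-independent sets is ``full'' between parts. More precisely, if some $V_\ell$ contains no vertex completing $e$ to a triangle, then $V_\ell$ covers $e$. Building a $K_{r+1}$ greedily with one vertex per part, plus both endpoints of a defect, shows that every defect must be covered by some part. Conversely, a single part can cover many defects only at a probability cost of $(3/4)^{k}$ per extra covered defect, which is $n^{-2\log_2(4/3)+o(1)}$. We then sum over configurations: covering a defect uses up a $k$-set, and a $k$-set covering two defects, or covering two defects within one $(k+1)$-set cleverly, costs an extra $n^{-c}$. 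Consequently, with probability $1-(\log n)^{-1}$ the only configurations present have $r-j$ covering sets and defects $\sum d_i\le r-j$ spread over $j$ sets. By the counting in the lower bound, this forces at least $\xi_j$ of the $(k+1)$-sets to have at most $\mu_j$ defects. Summing $\E[Z_{k+1,d}]$ for $d<\mu_j$ is at most $(\log n)^{-4}\E[Z_{k+1,\mu_j}]$ by \eqref{eq:Zratio}, so the probability of $(k+1)$-sets with fewer defects is absorbed unless $Z_{k+1,\mu_j}\ge\xi_j$ essentially holds, up to a $(\log n)^{-1/2}$ error. Here one handles separately the regime where $\E[Z_{k+1,\mu_j}]$ is large, in which the bound is trivial, and uses \cref{fact:Z_k-disjoint} for disjointness.

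\emph{Main obstacle.} The hardest step is the rigorous covering argument in Step 2: showing that in a $K_{r+1}$-free near-$r$-partite graph every defect is covered by a distinct part, or else an exponentially unlikely dense structure occurs. This requires controlling the cross edges between parts, which are random and not fixed. The first attempt would be a union bound in which, for each $(k+1)$-set with defects and each candidate covering set, the non-edges forced by $K_{r+1}$-freeness are counted explicitly, yielding a cost of $2^{-\Omega(k)}$ for each configuration in which a $K_{r+1}$ is avoided without a proper cover.
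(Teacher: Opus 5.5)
There is a genuine gap in Step~2, at your claim that parts of size at most $k-1$ are ``too costly''. This is false. We have $\E[Y_{k-\rho}]=n^{\rho+o(1)}\E[Y_k]$. So trading one $k$-part for a $(k-\rho)$-part, and $\rho$ further $k$-parts for $(k+1)$-parts, leaves the first moment of the configuration at the same order; for instance $\E[Y_{k-1}]\E[Y_{k+1}]=n^{o(1)}\E[Y_k]^2$. The only thing limiting such trades is that the extra $(k+1)$-parts bring extra defects. A $(k-\rho)$-set is $n^{\rho}$ times more plentiful than a $k$-set, so it can afford to cover several of those defects at cost $(3/4)^k$ or $(5/8)^k$ each. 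The second structure in \cref{fig:two-options}, a $(k-1)$-set covering three defects, is exactly such a competitive configuration. Consequently your conclusion that the only configurations consist of $r-j$ covering $k$-sets, each covering one defect, is unsupported, and your count only handles the case $\rho_i=0$. The missing ingredient is the trade-off in \cref{lem:light-sets-covering-defects}: a light $(k-\rho)$-set weakly covers at most $1+(\mu+1)\rho$ of the selected defects. This comes from comparing $\beta_1/2+\beta_2\ge\rho+1$ with the $n^{1+\rho+o(1)}$ choices for the set. Feeding it into the defect count over parts of every size in $[k-r+1,k+1]$ gives $\delta\le(b+\xi_j-1)/(\mu_j+1)<j$, and that inequality is what proves the lemma.

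Two further steps do not work as written.

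\emph{Steps~1 and~2 use incompatible scales.} Your Step~1 saving is only linear in the number $t$ of deletions, so $t$ must be of order $r\log n$ to beat the $n^{\Theta(r)}$ first-moment factor. That leaves parts carrying $\gtrsim\log n$ defects. For those, your first-moment exclusions in Step~2 (no low-defect $(k+2)$-parts; $(k+1)$-parts with $O(\log n/\log\log n)$ defects) fail, because $\binom{\binom{k}{2}}{d}=n^{\omega(1)}$ once $d\gg\log n/\log\log n$. In fact $(k+1)$-sets with many defects are abundant, so no first-moment bound on $d$ can exist. You need \cref{lem:far} in its actual form. By \cref{prop:EKR-BS} the saving is $e^{-m(\log m)^3/2}$ already for $m^{3/5}$-far graphs, so every part is light (at most $k^{3/4}$ edges), at a cost of only $n^{o(1)}$.

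\emph{The covering step is not justified.} Picking a common neighbour of a defect greedily in each other part does not produce a $K_{r+1}$, since those vertices must also be pairwise adjacent. Moreover, strict covering is not forced. A part forming at most $k^{2/3}$ triangles with the defect costs the same $(3/4)^{k(1-o(1))}$ as a genuine cover. So the correct dichotomy is weak cover versus no weak cover, not ``proper cover or a $2^{-\Omega(k)}$ penalty''. The paper shows that ``no weak cover'' is superpolynomially unlikely by Janson's inequality. This bound holds uniformly over the $e^{O(r^2k)}$ choices of light sets and $k^{2/3}$-subsets, with failure probability $e^{-\Omega(k^{5/4})}$ (\cref{lem:multipartite}). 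A penalty of only $2^{-\Omega(k)}=n^{-O(1)}$ does not beat $n^{\Theta(r)}$ configurations when $r$ grows.

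Minor: \eqref{eq:Zratio} bounds $\E[Z_{k+1,i+1}]/\E[Z_{k+1,i}]$ from above, which is the wrong direction for controlling $d<\mu_j$; use \cref{fact:numerical} instead.
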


\begin{remark}
    We note that the error here is only polylogarithmic rather than polynomial.  While we make no effort to optimize the error, the error in the upper bound of \cref{lem:new-upper} in some instances must be only polylogarithmic.  This is because  the proof of \cref{lem:new-lower} can also be used to show $$\PP(X \geq kr + j) \geq \PP(Z_{k+1,\mu_{j}} \geq \xi_j) - \PP(Z_{k+1,\mu_{j} +1} < j - \xi_j ) - n^{-\Omega(1)}\,.$$
    Depending on the value of $n,j_i$ and $j_{i-1}$, it is possible for both of the probabilities on the right-hand-side to be polylogarithmic.  The error in \cref{lem:new-upper} is driven by this event and comes into play with \cref{lem:no-fewer-defects}.
 
\end{remark}

Note that the desired conclusion of \cref{lem:new-upper} is equivalent, by taking complements,  to showing $$\PP(Z_{k+1,\mu_{j_i}} < \xi_{j_i}) \leq \PP(X < kr + j_i) + (\log n)^{-1/2}\,.$$

If $\E[Z_{k+1,\mu_{j}}] =\lambda \geq \log n$ note first that we may assume that $\lambda \leq n^{o(1)}$ by restricting our attention to a subset of the vertex set so that this occurs. Then by \cref{lem:Poisson-approx-Z} we have
$$\PP(Z_{k+1,\mu_{j}} < \xi_j)\le\sum_{t=0}^{\xi_j-1}\frac{e^{-\lambda} \lambda^t}{t!}+d_{\TV}(Z_{k+1, \mu_{j}}, \Poiss(\lambda)) \leq n^{-1 + o(1)},$$
where the last inequality holds as $\lambda < n^{o(1)}$. This shows the conclusion of \cref{lem:new-upper}. Consequently, to prove \cref{lem:new-upper} we may assume that $\E[Z_{k+1,\mu_{j}}] \leq \log n$.

To begin with, we show that 
each $(k+1)$-set has at least $\mu_{j} $ defects.  Set 
\begin{equation} \label{eq:fewer-than-mu-defects}
    \cB_1' = \bigvee_{\mu < \mu_{j}}\{ Z_{k+1,\mu} > 0\}\,.
\end{equation}

\begin{lemma}\label{lem:no-fewer-defects}
     Let $k = 2\log_2 n + O(\log\log n), r = o(\log\log n / \log\log\log n)$ and $\mu_{j}$ satisfy $\E[ Z_{k+1,\mu_{j}}] \leq \log n$. Then $$\PP(\cB_1') \leq (\log n)^{-1 + o(1)}\,.$$
\end{lemma}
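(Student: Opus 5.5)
We prove this with a first moment bound: Markov's inequality and a union bound over $\mu<\mu_j$ give
\[
\PP(\cB_1') \le \sum_{\mu=0}^{\mu_j-1} \E[Z_{k+1,\mu}],
\]
so it suffices to show that this sum is at most $(\log n)^{-1+o(1)}$, using only the hypothesis $\E[Z_{k+1,\mu_j}]\le \log n$ and the explicit ratios between consecutive means.

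\textbf{The ratio computation.} As in \eqref{eq:Zratio}, for $i\ge 0$ we have
\[
\frac{\E[Z_{k+1,i+1}]}{\E[Z_{k+1,i}]}=\frac{\binom{k+1}{2}-i}{i+1}.
\]
Running this backwards from $\mu_j$ gives, for $\mu<\mu_j$,
\[
\E[Z_{k+1,\mu}] = \E[Z_{k+1,\mu_j}]\prod_{i=\mu}^{\mu_j-1}\frac{i+1}{\binom{k+1}{2}-i}.
\]
Note that $\mu_j\le r-1 = o(\log\log n)$, so $i<\mu_j$ is tiny compared to $\binom{k+1}{2}=\Theta(\log^2 n)$. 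Each factor is therefore at most $(i+1)/\bigl((1-o(1))\binom{k+1}{2}\bigr)\le \mu_j\,(\log n)^{-2+o(1)}$.

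\textbf{The main term.} The worst case is $\mu=\mu_j-1$. There
\[
\E[Z_{k+1,\mu_j-1}]\le \log n\cdot \frac{\mu_j}{(1-o(1))\binom{k+1}{2}}\le \mu_j(\log n)^{-1+o(1)},
\]
and since $\mu_j\le r=(\log n)^{o(1)}$ this is $(\log n)^{-1+o(1)}$.

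\textbf{The remaining terms.} Each further step down in $\mu$ multiplies the bound by at most $r(\log n)^{-2+o(1)}$, which is $o(1)$. The terms with $\mu<\mu_j-1$ therefore form a geometrically decaying series, and the whole sum is $(1+o(1))\,\E[Z_{k+1,\mu_j-1}]$. This gives $\PP(\cB_1')\le (\log n)^{-1+o(1)}$.

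The only mild point to check is that $i$ stays small relative to $k^2$, so that the ratio lower bound $\binom{k+1}{2}-i\ge(1-o(1))k^2/2$ is valid; this is immediate from $r=o(\log\log n)$. The factor $\mu_j\le r$ in the numerator is absorbed by the $o(1)$ in the exponent. If $\mu_j=0$, the union is empty and the bound is trivial.
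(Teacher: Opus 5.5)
Your proof is correct and follows the paper's argument. Both use a union bound with Markov's inequality over $\mu<\mu_j$, then compare each $\E[Z_{k+1,\mu}]$ to $\E[Z_{k+1,\mu_j}]\le\log n$ through a loss of roughly $k^{-2(\mu_j-\mu)}$. The only difference is that you use the exact consecutive ratio $\bigl(\binom{k+1}{2}-i\bigr)/(i+1)$ rather than the asymptotic form in \cref{fact:numerical}, which handles the $o(1)$ in the exponent a bit more cleanly when $r$ grows with $n$.
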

\begin{proof}
    By \cref{fact:numerical}  we may bound \begin{equation*} 
        \PP(\cB_1') \leq \sum_{\mu < \mu_{j}} \E [Z_{k+1,\mu}] \leq \sum_{\mu < \mu_{j}} \E [Z_{k+1,\mu_{j}}] \cdot k^{-2(\mu_{j} - \mu) + o(1)} \leq  (\log n)^{1-2+o(1)}<(\log n)^{-1 + o(1)}\,. \qedhere
    \end{equation*}
\end{proof}

We prove \cref{lem:new-upper} by a stability argument. 
The first step in the proof of \cref{lem:new-upper} is an application of an argument of Balogh
and Samotij~\cite{balogh2019efficient} that bounds the number of $ K_{r+1}$-free graphs on a given set of vertices. As we need to 
adapt their argument for our purpose, we begin with one of their definitions. We say
that a graph $G$ is {\em $t$-close to $r$-partite} if $G$ can be made $r$-partite by removing
$t$ edges. We say that $G$ is {\em $t$-far from $r$-partite} if there is no such set of $t$ edges. 

Define $\cB_2'$ to be the event that for some $kr \leq u \leq (k+1)r$ there is a $K_{r+1}$-free subgraph on $u$ vertices that is $k^{3/4}$-far from being $r$-partite.  

\begin{lemma} \label{lem:far}
If $k = 2\log_2 n + O(\log\log n)$ and $r = o(\log\log n / \log\log\log n)$ then
we have $\PP( \cB_2') \le  e^{ -\omega( \log n)}$.   
\end{lemma}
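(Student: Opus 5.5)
We will prove the bound with a first moment computation: a union bound over the possible vertex sets and over the possible induced graphs. For a fixed $u$ with $kr \le u \le (k+1)r$, let $F_u$ be the number of $K_{r+1}$-free graphs on vertex set $[u]$ that are $k^{3/4}$-far from $r$-partite. The expected number of $u$-sets in $G_{n,1/2}$ inducing such a graph is $\binom{n}{u} F_u 2^{-\binom{u}{2}}$. It therefore suffices to show that
\[ \sum_{u=kr}^{(k+1)r} \binom{n}{u} F_u 2^{-\binom{u}{2}} = e^{-\omega(\log n)}. \]

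The key input is the Balogh--Samotij container-based count of $K_{r+1}$-free graphs that are far from $r$-partite. In the form we would use, the number of $K_{r+1}$-free graphs on $u$ vertices that are $t$-far from $r$-partite is at most
\[ 2^{(1-1/r)u^2/2}\cdot 2^{-\delta t u/r^{C}} \cdot r^{u}, \]
valid for $t$ polynomially smaller than $u^2$, with $\delta, C$ absolute constants. The essential feature is that each unit of distance costs a factor $2^{-\Omega(u/\mathrm{poly}(r))}$ compared with the number $\approx r^u 2^{t_r(u)}$ of $r$-partite graphs. We would check that their proof, which proceeds by iterating the efficient container lemma and accounting for the edges that must be removed, gives this dependence on $t$ when $r$ is allowed to grow slowly.

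The benchmark is the $r$-partite count, whose expectation is
\[ \binom{n}{u} r^u 2^{t_r(u)-\binom{u}{2}} \approx \prod_{i} \E[Y_{|V_i|}] \cdot n^{O(r)} \le n^{O(r)}. \]
This uses the splitting of $u$ vertices into $r$ parts of sizes $k$ or $k+1$ with $\E[Y_k] \le n^{1+o(1)}$ (\cref{fact:numerical}), together with the bound $\binom{n}{u} \le \prod_i \binom{n}{|V_i|}\cdot \binom{u}{V_1,\dots,V_r}$. Near-balanced parts dominate, and unbalanced parts lose factors exponential in $k$ times the imbalance. Multiplying by the far-from-partite penalty with $t = k^{3/4}$ and $u \approx kr$ gives
\[ n^{O(r)}\, 2^{-\delta k^{7/4}/r^{C'}}. \]
Since $k \asymp \log n$ and $r = o(\log\log n/\log\log\log n)$, we have $r^{C'} = (\log n)^{o(1)}$. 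The exponent is then $-(\log n)^{7/4-o(1)}$, which overwhelms $n^{O(r)} = e^{O(r\log n)}$ and yields $e^{-\omega(\log n)}$ after summing over the $r+1$ values of $u$.

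The main obstacle is the first step: extracting from Balogh--Samotij a bound with explicit, only polynomial dependence on $r$ and a penalty linear in $t\cdot u$. Their stated theorem is for fixed $r$ and might give only a penalty like $2^{-\Omega(t)}$ or $2^{-\Omega(tu)}$ with $r$-dependent constants. A penalty of only $2^{-\Omega(t)}$ would be too weak, so we would redo their supersaturation and container iteration, tracking the constants in $r$. If this tracking fails, the fallback is a direct stability argument. A far-from-partite $K_{r+1}$-free graph has its edge count bounded below the Turán number by $\Omega(tu/r^{2})$, and we would count graphs through containers with that deficit.
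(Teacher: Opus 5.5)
Your skeleton is the paper's. You union bound over $u$-sets and induced graphs, use the identity $\binom u2=\mathrm{ex}(u,K_{r+1})+(r-a)\binom k2+a\binom{k+1}2$ to compare with $\E[Y_k]^{r-a}\E[Y_{k+1}]^a$, and plug in a Balogh--Samotij count of far-from-$r$-partite $K_{r+1}$-free graphs.

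The gap is the counting input you rely on, which is also the step you plan to prove by redoing their argument. The bound $r^u2^{(1-1/r)u^2/2-\delta tu/r^{C}}$ is false, already at $t=k^{3/4}$ and $u\approx kr$. To see this, take parts $V_1,\dots,V_r$ of sizes $k$ or $k+1$, and construct the graph as follows.
\begin{itemize}
\item Place $s$ vertex-disjoint stars with $D=\sqrt k$ leaves inside $V_1$.
\item Give each centre $v$ exactly $D$ neighbours $N_v\subseteq V_2$, and make the leaves of $v$ non-adjacent to $N_v$.
\item Put no other edges inside parts, and let all remaining cross pairs be arbitrary.
\end{itemize}
A clique meets $V_1$ in at most two vertices and every other part in at most one. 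So a $K_{r+1}$ would have to contain a star edge $vw$ together with a common neighbour of $v,w$ in $V_2$, and there is none. A typical such graph needs exactly $t=sD$ deletions to become $r$-partite. There are at least $2^{t_r(u)-s(k+1)-sD^2}\approx 2^{t_r(u)-2t\sqrt k}$ of these graphs, i.e.\ $2^{t_r(u)-O(k^{5/4})}$ when $t=k^{3/4}$. Your bound allows only $2^{t_r(u)-\delta k^{7/4}/r^{C-1}+O(kr\log r)}$, and $r^{C}=k^{o(1)}$, so the bound fails.

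The complete version of these graphs also refutes your fallback. It is $K_{r+1}$-free and about $t$-far from $r$-partite, yet it misses only about $2t\sqrt k$ edges of the Tur\'an graph, not $\Omega(tu/r^2)\approx tk/r$. So no tracking of constants in the container iteration will deliver an exponent like $(\log n)^{7/4}$.

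What you need is far weaker. The union bound gives $\PP(\cB_2'(u))\le |\cF|\,2^{-\mathrm{ex}(u,K_{r+1})}\E[Y_k]^r$, and $\E[Y_k]^r=e^{O(u)}$ in this regime. So relative to $2^{\mathrm{ex}(u,K_{r+1})}$, any saving factor $e^{-\omega(u)}$ suffices. (Incidentally, the inequality that makes your benchmark $n^{O(r)}$ is $\binom nu\binom u{V_1,\dots,V_r}\le\prod_i\binom n{|V_i|}$; the one you wrote loses an extra $r^u$.)

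The paper uses \cref{prop:EKR-BS}: $|\cF|\le 2^{\mathrm{ex}(u,K_{r+1})}e^{-u(\log u)^3/2}$ for $u^{3/5}$-far graphs, which applies since $k^{3/4}>u^{3/5}$. Its proof takes the very far case and the unbalanced case directly from Balogh--Samotij. In the close, balanced case, a graph with $t$ edges inside the parts of an optimal partition has one of two structures:
\begin{itemize}
\item a vertex of within-part degree $D=(ut)^{1/3}$, and minimality of the partition then gives it degree at least $D$ into every other part; or
\item a within-part matching of size $t/(Dr)$ in one part.
\end{itemize}
Their star and matching lemmas then give a saving of order $(ut)^{2/3}/\mathrm{poly}(r)$, not $ut$, consistent with the example above. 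This saving beats the $u^{2t}$ choices of within-part edges and the $r^u$ partitions once $t>u^{3/5}$.
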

\cref{lem:far} is essentially implicit in the work \cite{balogh2019efficient}.  More specifically,~\cite{balogh2019efficient} implicitly proves the following counting statement.   Write $\mathrm{ex}(m,K_{r+1})$ for the maximum number of edges in a $K_{r+1}$-free graph on $m$ vertices. 
\begin{proposition}\label{prop:EKR-BS}
    Let $\cG$ be the collection of $K_{r+1}$-free graphs on $m$ vertices that are $m^{3/5}$-far from being $r$-partite with $r = o(\log m / \log\log m)$.  Then \begin{equation}
        | \cG| \le 2^{{\rm ex}(m, K_{r+1})}  e^{ - m (\log m)^3/2}.
    \end{equation} 
   \end{proposition}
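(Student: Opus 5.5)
We follow the container-based approach of Balogh and Samotij~\cite{balogh2019efficient}: apply an efficient hypergraph container lemma to the $(r+1)$-uniform hypergraph $\mathcal{H}$ on vertex set $\binom{[m]}{2}$ whose hyperedges are the edge sets of copies of $K_{r+1}$ in $K_m$. Every $K_{r+1}$-free graph on $[m]$ is an independent set of $\mathcal{H}$. Iterating the container lemma produces a family $\mathcal{C}$ of containers with these properties:
\begin{itemize}
\item every $K_{r+1}$-free graph is a subgraph of some $C\in\mathcal{C}$;
\item each $C$ contains at most $m^{2-\delta}$ copies of $K_{r+1}$, for some $\delta>0$ depending only weakly on $r$;
\item $\log_2|\mathcal{C}| \le m^{2-\delta'}$.
\end{itemize}
This bound on $|\mathcal{C}|$ is far smaller than the saving $m(\log m)^3$ we are after only if we use the refined, ``efficient'' version. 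There one builds containers only for graphs that are already close to a fixed $r$-partition, and the cost of specifying a container is $O(m\log m)$ bits per vertex class rather than $m^{2-\delta'}$. This refinement is the one step where real care is needed.

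By the supersaturation/removal lemma, each container $C$ can be made $r$-partite by deleting $o(m^2)$ edges. So every $G\in\mathcal{G}$ lies in some $C$ with a partition $V_1\cup\dots\cup V_r$ such that $C$ has $o(m^2)$ edges inside the parts. Moreover $|C|\le \mathrm{ex}(m,K_{r+1})+o(m^2)$.

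The core of the argument is a stability count. Fix $G\in\mathcal{G}$ and choose a max-cut $r$-partition $\Pi$ of $G$. Since $G$ is $K_{r+1}$-free and $m^{3/5}$-far from $r$-partite, $G$ has at least $m^{3/5}$ edges inside the parts of $\Pi$. We then split into two cases.

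\textbf{Case 1: some vertex has at least $\eta m$ neighbours in its own part.} Here we use max-cut and $K_{r+1}$-freeness in the standard Kolaitis--Pr\"omel--Rothschild manner. The vertex $v$ then has linear-size neighbourhoods in every part, and $K_{r+1}$-freeness forces the crossing edges among these neighbourhoods to be missing on a set of $\Omega(m^2)$ pairs, or else a linear set of pairs must be absent. This costs a factor $2^{-\Omega(m^2)}$ against $2^{\mathrm{ex}(m,K_{r+1})}$, where $\Omega$ hides factors like $r^{-O(r)}$. That is still $\gg m(\log m)^3$ because $r=o(\log m/\log\log m)$.

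\textbf{Case 2: every vertex has fewer than $\eta m$ neighbours in its own part.} Take a set $M$ of at least $m^{3/5}/(2\eta m)$ vertex-disjoint, or at least well-spread, internal edges $uv$. For each such edge, $K_{r+1}$-freeness forbids a common neighbour clique across the other $r-1$ parts together with $u,v$. Because common neighbourhoods of $u,v$ in the other parts are typically of linear size, each internal edge forces $\Omega(m/r^{O(1)})$ crossing pairs to be non-edges. A greedy choice of edges with disjoint forced sets yields roughly $m^{3/5}\cdot m/\mathrm{poly}(r)$ forced missing crossing pairs.

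Counting specifications costs about $2^{O(m\log r)}$ for the partition and $\binom{m^2}{t}$ for the internal edges. The number of graphs with $t\ge m^{3/5}$ internal edges is therefore at most
\[
2^{\mathrm{ex}(m,K_{r+1})}\, r^{m}\sum_{t\ge m^{3/5}} \binom{m^2}{t}\, 2^{-\Omega(t m/\mathrm{poly}(r))}.
\]
Since $t\log m \ll tm/\mathrm{poly}(r)$, the sum is at most $2^{-m^{8/5-o(1)}}$, which is far below $e^{-m(\log m)^3/2}$.

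\textbf{Main obstacle.} The delicate point is the second case: guaranteeing that common neighbourhoods of internal edges are large enough across all other parts, uniformly in $r$ growing. Vertices with small degree into some part must be handled separately, by showing that they are rare, which costs $2^{-\Omega(m)}$ per such vertex. The container step is only needed to reduce to near-$r$-partite graphs efficiently, and we would take its quantitative form directly from~\cite{balogh2019efficient}.
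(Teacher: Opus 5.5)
There is a genuine gap in your Case 2, which is where all the work lies. You split at internal degree $\eta m$. A graph with $t$ internal edges and maximum internal degree below $\eta m$ is then only guaranteed a matching of about $t/(\eta m)$ internal edges. For $t=m^{3/5}$ this is less than one edge, as your own count $m^{3/5}/(2\eta m)$ shows. So the matching argument saves only about $t/\mathrm{poly}(r)$ bits, which does not pay for the roughly $2t\log_2 m$ bits spent on $\binom{m^2}{t}$.

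The bound $2^{-\Omega(tm/\mathrm{poly}(r))}$ you then sum is in fact false in this range. Forced non-edges do not add up across edges sharing a high-degree vertex, and max-cut only gives such a centre $D$ neighbours in each other part, not linearly many. For instance, take $r=2$, a balanced bipartition, and $u\in V_1$ with $D=m^{2/3}$ internal neighbours and exactly $D$ neighbours $S\subseteq V_2$ (allowed by max-cut). Each leaf only has to avoid $S$ in $V_2$. This gives about $2^{\mathrm{ex}(m,K_3)-m/2-D^2+O(D\log m)}$ triangle-free graphs at distance $D$ from bipartite, a loss of order $D^2=m^{4/3}$ rather than $Dm=m^{5/3}$.

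The missing idea is a star bound that is quadratic in the star size, together with a star/matching threshold that depends on $t$. The paper uses Lemmas 6.8 and 6.9 of Balogh--Samotij. A star $K_{1,D}$ inside a part, whose centre has at least $D$ neighbours in every other part (exactly what minimality of $\Pi(G)$ provides), costs $D^2/(8r^2)$ bits. A matching of $s$ internal edges in a balanced partition costs $sm/(2^{10}r^4)$ bits. With $D=(mt)^{1/3}$, every graph in $\cF_{t,\Pi}$ has either such a star or a matching of size $s=t/(Dr)$ inside one part. Either way the saving is $(mt)^{2/3}/\mathrm{poly}(r)$. This exceeds $t(\log m)^2$ because $t<m^2/(8\log m)^{15}$, and exceeds $m(\log m)^4$ because $t>m^{3/5}$; that is where the exponent $3/5$ comes from.

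Two further points. When $r$ grows, this direct count only covers $t$ up to $m^2/\mathrm{polylog}(m)$, and a removal-lemma reduction to $o(m^2)$-closeness does not bridge the gap. Also, your linear-common-neighbourhood heuristic needs every part to have linear size. The paper handles both by importing from Balogh--Samotij the quantitative bounds for $t(G)\ge m^2/(8\log m)^{15}$ and for unbalanced optimal partitions.
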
 

We deduce \cref{prop:EKR-BS} from \cite{balogh2019efficient} in \cref{sec:far}, where we make no attempt to optimize the logarithms.  From here, a simple union bound handles \cref{lem:far}.

\begin{proof}[Proof of \cref{lem:far}]
    We will handle a fixed $u \in [kr,(k+1)r]$ first and let $\cB_2'(u)$ denote the event that there is a $K_{r+1}$-free subgraph on $u$ vertices that is $k^{3/4}$-far from being $r$-partite. Write $ u =kr +a$.  Let us argue that
    \begin{equation} \label{eqn:akru}
    \frac{ \binom{n}{u}}{ \binom{n}{k}^{r-a} \binom{n}{k+1}^{a}} \EE[ Y_k]^{r-a} \EE[ Y_{k+1}]^a
        <\EE[Y_k]^r.
    \end{equation}
    First, assume $a=0$. In this case we are to show that ${n \choose kr}< {n \choose k}^r$.
    As $1 \ll k = O(\log n)$, we certainly have ${n \choose k}>(2n/k)^k$, and since $r \ge 2$, this yields
    ${n \choose kr}\le (ne/kr)^{kr}<(2n/k)^{kr} < {n \choose k}^r$. Next, assume $a>0$. In this case, apply \cref{fact:numerical} to obtain
     \begin{equation*} 
    \frac{ \binom{n}{u}}{ \binom{n}{k}^{r-a} \binom{n}{k+1}^{a}} \EE[ Y_k]^{r-a} \EE[ Y_{k+1}]^a
        <\frac{ \binom{n}{u}}{ \binom{n}{k}^{r-a} \binom{n}{k+1}^{a}} \EE[Y_k]^{r} n^{-a+o(a)}.
         \end{equation*}
         The power of $n$ in the expression on the right above (excluding the $\EE[Y_k]$ term) is
         $$u-a+o(a)-k(r-a)-(k+1)a=kr-k(r-a)-(k+1)a+o(a) = -a+o(a)<0,$$
         and hence (\ref{eqn:akru}) holds.
         
    Next, observe that ${u \choose 2} = {\rm ex}(u,K_{r+1}) +(r-a) \binom{k}{2} + a \binom{k+1}{2}$. Let $\cF$ denote the set of $K_{r+1}$-free graphs on $u$ vertices that are $k^{3/4}$-far from being $r$-partite. 
     Then (\ref{eqn:akru}) yields
      \begin{align*}
    \PP( \cB_2'(u)) &\le \binom{n}{u} | \cF| \, 2^{- \binom{u}{2}} \\
    &= |\cF| \binom{n}{u}
    2^{- {\rm ex}(u,K_{r+1}) -(r-a) \binom{k}{2} - a \binom{k+1}{2}} \\
    &= | \cF| 2^{-{\rm ex}(u, K_{r+1})} \frac{ \binom{n}{u}}{ \binom{n}{k}^{r-a} \binom{n}{k+1}^{a}} \EE[ Y_k]^{r-a} \EE[ Y_{k+1}]^a \\
    &\le \frac{ | \cF|}{ 2^{{\rm ex}(u, K_{r+1})}} \EE[Y_k]^r. 
       \end{align*}
    Every graph in $\cF$ is certainly $u^{3/5}$-far from being $r$-partite as $u^{3/5} < ((k+1)r)^{3/5} < k^{3/4}$. Hence by Proposition~\ref{prop:EKR-BS}, $|\cF|\le 2^{{\rm ex}(u, K_{r+1})}  e^{ - u (\log u)^3}$. As $k \sim 2 \log_2n$ and $r = o(\log\log n)$, we finally obtain 
    \begin{equation*}   \PP( \cB_2'(u)) \le e^{ - u (\log u)^3} \EE[Y_k]^r
    \leq \exp( -kr (\log kr)^3 + 2r \log n) 
    \leq \exp(-\omega(\log n))\,. \qedhere
    \end{equation*}
\end{proof}

In light of Lemma~\ref{lem:far} we restrict our attention to potential $K_{r+1}$-free graphs
with vertex sets $ A_1 \dot\cup A_2 \dot\cup \dotsm \dot\cup A_r$ where each set $A_i$ contains
at most $ k^{3/4}$ edges.  We say a set is \emph{light} if it has at most $k^{3/4}$ edges.

A simple first moment computation shows that with high probability there are no light sets on $m$ vertices where $m \ge k+2$.  Let $\cB_3'$ be the event that there is a light $(k+2)$-set.

\begin{fact}\label{fact:no-light-k+2}
    If $\E [Y_k] \leq n^{1 + o(1)}$ then $\PP(\cB_3') \leq n^{-1 + o(1)}$.  
\end{fact}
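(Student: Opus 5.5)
We prove \cref{fact:no-light-k+2} by a first moment bound. Let $L$ be the number of $(k+2)$-sets spanning at most $k^{3/4}$ edges, so that $\PP(\cB_3') \le \E[L]$. We have
\[
\E[L] = \binom{n}{k+2} \sum_{i \le k^{3/4}} \binom{\binom{k+2}{2}}{i} 2^{-\binom{k+2}{2}} \le \binom{n}{k+2} 2^{-\binom{k+2}{2}} \cdot k^{2 k^{3/4}(1+o(1))}.
\]
The first factor is $\E[Y_{k+2}]$. The second factor is $\exp(O(k^{3/4}\log k)) = \exp(O((\log n)^{3/4}\log\log n)) = n^{o(1)}$, since $k = \Theta(\log n)$.

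It therefore suffices to show $\E[Y_{k+2}] \le n^{-2+o(1)}\E[Y_k]$. This follows by applying the ratio estimate of \cref{fact:numerical} twice, once to $\E[Y_{k+1}]/\E[Y_k]$ and once to $\E[Y_{k+2}]/\E[Y_{k+1}]$. Each ratio is $n^{-1+o(1)}$; this holds because $k$ and $k+1$ both lie in the range $2\log_2 n + O(\log\log n)$ required by that fact.

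Directly, the ratio is
\[
\frac{\E[Y_{m+1}]}{\E[Y_m]} = \frac{n-m}{m+1}\, 2^{-m},
\]
and $2^{-m} = n^{-2+o(1)}$ for $m = 2\log_2 n + O(\log\log n)$, so the ratio is $n^{-1+o(1)}$.

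Combining these with the hypothesis $\E[Y_k] \le n^{1+o(1)}$ gives
\[
\E[L] \le n^{o(1)} \cdot n^{-2+o(1)} \cdot n^{1+o(1)} = n^{-1+o(1)},
\]
as claimed.

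No step is a real obstacle. The only point needing care is that the binomial factor counting edge placements, $k^{O(k^{3/4})}$, stays subpolynomial. This holds because $k^{3/4}\log k = o(\log n)$; the light threshold is $k^{3/4}$ rather than something linear in $k$ exactly so that this factor does not swamp the $n^{-2}$ saving.
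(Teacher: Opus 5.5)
Your proof is correct and is essentially the paper's argument: a first-moment bound in which the edge-placement factor is $e^{O(k^{3/4}\log k)} = n^{o(1)}$, combined with \cref{fact:numerical} to get $\E[Y_{k+2}] = n^{-2+o(1)}\E[Y_k] \le n^{-1+o(1)}$. Your version is slightly more careful than the paper's: you sum over all $i \le k^{3/4}$ and write an inequality, whereas the paper writes a single binomial term with an equality sign.
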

\begin{proof}
    By \cref{fact:numerical},  \[ \PP(\cB_3') =\binom{n}{k+2} \binom{ \binom{k+2}{2}}{ k^{3/4}} \left( \frac12 \right)^{ \binom{k+2}{2}} 
= \EE[ Y_{k+2}] e^{ O( (\log k) k^{3/4}) } = \EE[ Y_{k+2}] n^{o(1)} \leq n^{-1 + o(1)} \,. \qedhere\]
\end{proof}

Since we assume that $|A_1 \cup \cdots \cup A_r| \ge kr$ and $\max |A_i| \le  k+1$ we  can restrict our attention to light sets $A_i$ with $ k -(r-1) \le |A_i| \le k+1$.  Further, recall that we refer to each edge in such a light set as
a \emph{defect}.

We now make observations that put additional conditions on the defects in the
sets $ A_1, \dots, A_r$. We start by noting that a collection of
$r-1$ pairwise-disjoint light sets spans many copies of $ K_{r-1}$. Let $\cB_4'$ be the
event that there are pairwise disjoint light sets $ A_1, A_2, \dots, A_{r-1}$ 
and sets $ B_i \subset A_i$ such that $ k - (r-2) \le |A_i| \le k+1$ and $B_i \ge k^{2/3}$ such that there is no copy of 
$K_{r-1}$ with exactly one vertex in each $B_i$
\begin{lemma} \label{lem:multipartite}
  If $k = 2\log_2 n -2\log_2\log_2 n + O(1)$ and $ r = o(\log\log n / \log\log\log n)$ then we have $ \PP( \cB_4') = o(1/n)$.
\end{lemma}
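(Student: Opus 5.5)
We argue by a union bound over configurations: disjoint light sets $A_1,\dots,A_{r-1}$ together with subsets $B_i\subset A_i$ with $|B_i|\ge k^{2/3}$. By shrinking we may assume $|B_i|=b:=\lceil k^{2/3}\rceil$ exactly, since the absence of a transversal $K_{r-1}$ in $B_1\times\cdots\times B_{r-1}$ persists under passing to subsets. Note that the edges inside each $A_i$ play no role in the event of having no transversal copy of $K_{r-1}$. The relevant edges are only those between distinct $B_i,B_{i'}$, and these are independent of the edges inside the $A_i$, which determine lightness.

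\textbf{First moment over the light sets.} For fixed disjoint $A_1,\dots,A_{r-1}$, the probability that all are light is $\prod_i \PP(A_i\text{ light})$. Summing over choices, the expected number of such tuples is at most $\prod_i \sum_{s=k-r+2}^{k+1}\E[\#\text{light } s\text{-sets}]$. As in \cref{fact:no-light-k+2} and \cref{fact:numerical}, a light $s$-set has expected count $\E[Y_s]e^{O(k^{3/4}\log k)}$. Under the hypothesis $k=2\log_2 n-2\log_2\log_2 n+O(1)$, we have $\E[Y_s]\le n^{O(r)}$ for $s\ge k-r+2$, because each decrease of $s$ by one multiplies the count by $n^{1+o(1)}$. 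So the number of tuples is at most $n^{O(r^2)}e^{O(rk^{3/4}\log k)}=e^{O(r^2\log n)}$. The number of choices of the $B_i$ inside the $A_i$ is at most $\binom{k+1}{b}^{r-1}=e^{O(rk^{2/3}\log k)}$.

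\textbf{The main step: a Janson bound.} Fix the $B_i$ and estimate the probability that $G[B_1,\dots,B_{r-1}]$ (the $(r-1)$-partite cross edges) contains no transversal $K_{r-1}$. Apply \cref{thm:janson} (classical Janson) with the events indexed by transversals $(v_1,\dots,v_{r-1})$, each of probability $p=2^{-\binom{r-1}{2}}$. The mean is $\mu=b^{r-1}p$. For $\Delta$, sum over pairs of transversals sharing $\ell$ vertices, $2\le \ell\le r-2$:
\[
\Delta\le \sum_{\ell=2}^{r-2}\binom{r-1}{\ell}b^{2(r-1)-\ell}p^2 2^{\binom{\ell}{2}}\le \mu^2\,2^{r}\max_\ell b^{-\ell}2^{\binom{\ell}{2}} .
\]
Because $2^{\binom{\ell}{2}}\le 2^{r^2}$ and $b=k^{2/3}$ with $r=o(\log\log n/\log\log\log n)$, we get $2^{r^2}=(\log n)^{o(1)}$, which is much smaller than $b^2$. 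Hence $\Delta\le \mu^2 b^{-2+o(1)}$, and Janson gives
\[
\PP(\text{no transversal }K_{r-1})\le \exp\!\left(-\min(\mu/2,\ \mu^2/(2\Delta))\right)\le \exp(-b^{2-o(1)})=\exp(-k^{4/3-o(1)}).
\]
We also check that $\mu=b^{r-1}2^{-\binom{r-1}{2}}\ge b^{2-o(1)}$. For $r\ge3$ this holds since $b^{r-3}\ge 2^{\binom{r-1}{2}}$ fails only if $r$ is large, and here $r^2\ll \log b$. For $r=2$ the claim is trivial, as a single vertex is a $K_1$.

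\textbf{Combining.} The union bound gives
\[
\PP(\cB_4')\le e^{O(r^2\log n)+O(rk^{3/4}\log k)}\exp(-k^{4/3-o(1)}).
\]
Since $k^{4/3}\gg \log n\cdot r^2$ and $k^{4/3}\gg rk^{3/4}\log k$, this is $e^{-\omega(\log n)}=o(1/n)$.

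The main obstacle is making the Janson exponent beat the entropy of choosing the light sets, which is $e^{\Theta(r^2\log n)}$. This is exactly why we need $|B_i|\ge k^{2/3}$, so that $b^2\gg \log n$, together with $r$ small enough that $2^{r^2}$ is only polylogarithmic in $k$. Some care is also needed in the $\Delta$ estimate for the smallest overlaps $\ell=2$, which dominate.
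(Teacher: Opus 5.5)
Your strategy matches the paper's: a first-moment union bound over disjoint light sets $A_1,\dots,A_{r-1}$ and $k^{2/3}$-subsets $B_i$, combined with Janson's inequality for transversal copies of $K_{r-1}$, with the entropy $e^{O(r^2\log n)}$ beaten by $e^{-k^{4/3-o(1)}}$. Your remark that the cross edges are independent of the edges that decide lightness is a point the paper leaves implicit.

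\textbf{The gap.} The Janson estimates do not hold as you justify them over the full range of $r$. You bound $\max_\ell b^{-\ell}2^{\binom{\ell}{2}}\le b^{-2}2^{r^2}$ and then assert $2^{r^2}=(\log n)^{o(1)}$; later you also use $r^2\ll\log b$. The hypothesis $r=o(\log\log n/\log\log\log n)$ only gives $r=o(\log k)$, i.e.\ $2^{r}=k^{o(1)}$. It allows, for example, $r=(\log\log n)^{0.9}$. Then $r^2\gg\log k$, and $2^{r^2}$ exceeds every fixed power of $k$. In that range your bound on $\Delta/\mu^2$ tends to infinity, and the Janson exponent $\mu^2/(2\Delta)$ you derive is vacuous. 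Separately, your inequality $b^{r-3}\ge 2^{\binom{r-1}{2}}$ fails literally at $r=3$, although $\mu=b^2/2$ is harmless there.

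\textbf{The fix.} Do not bound the two factors separately. The consecutive ratio is $b^{-(\ell+1)}2^{\binom{\ell+1}{2}}\big/\bigl(b^{-\ell}2^{\binom{\ell}{2}}\bigr)=2^{\ell}/b\le 2^{r}/b=k^{-2/3+o(1)}<1$. So the terms decrease in $\ell$ and the $\ell=2$ term dominates, giving $\Delta\le 2^{r+1}b^{-2}\mu^2=\mu^2b^{-2+o(1)}$. Likewise $\mu=\bigl(b\,2^{-(r-2)/2}\bigr)^{r-1}\ge b^{2-o(1)}$ for $r\ge 3$, because $2^{r/2}=b^{o(1)}$. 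What is actually needed is $r=o(\log b)$, not $r^2=o(\log b)$. This is how the paper's estimate $\sum_j\binom{r-1}{j}2^{\binom j2}k^{-2j/3}=r^{O(1)}k^{-4/3}$ is obtained. With these corrections the rest of your argument goes through unchanged.
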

\begin{proof}
Note that the claim holds trivially for $r=2$. For larger $r$
the proof is a straightforward application of Janson's inequality. We will union bound over choices of $A_{1},\ldots,A_{r-1}$ and $B_1,\ldots,B_{r-1}$.

Let $ r \ge 3$ and let $ B_1, \dots, B_{r-1}$ be fixed, disjoint sets
of $ k^{2/3}$ vertices. Let the random variable $W$ count the number of
$ K_{r-1}$'s among these sets with one vertex in each $ B_{i}$. We have 
$\EE[W] = k^{2(r-1)/3} 2^{-{\binom{r-1}{2}}} $. 
Furthermore, letting $\Delta$ denote the corresponding quantity for an application of Janson's inequality, we may bound
$$ \Delta \le \EE[W] \sum_{ j=2}^{r-2} \binom{r-1}{j} k^{\frac{2(r-j-1)}{3}} \left( \frac{1}{2} \right)^{\binom{r-1}{2} - \binom{j}{2}}
= \EE[W]^2 \sum_{j=2}^{r-2} \binom{r-1}{j} 2^{\binom{j}{2}} k^{-\frac{2j}{3}} 
= r^{O(1)} \EE[W]^2 k^{-4/3}.$$
Janson's inequality (\cref{thm:janson}) then implies that the probability that the sets $ B_1, \dots, B_{r-1}$ span no
copy of $ K_{r-1}$ is at most $ e^{ - \Omega( k^{5/4})}$.

Next we note that the expected number of collections of pairwise disjoint light sets $ A_1, \dots, A_{r-1}$ that contain sets $ B_1, \dots, B_{r-1}$, respectively, such
that $ |B_i| = k^{2/3}$ is at most
\[  \left[ \sum_{\rho=-1}^{r-2} \EE[ Y_{k- \rho}] \binom{ \binom{k - \rho}{2}}{ k^{3/4}}\binom{k- \rho}{k^{2/3}}  \right]^{r-1} <
\left[ r \EE[ Y_{k-r+1}] \binom{ \binom{k+1}{2}}{ k^{3/4}}\binom{k+1}{k^{2/3}}  \right]^{r-1} = e^{ O(r^2k)}. \]
The desired bound on $ \PP( \cB_4')$ then follows from the first moment.
\end{proof}

\noindent
Observe that if we have a defect in one of the sets $ A_1, \dots ,A_r$ then we
expect that edge to form copies of $K_3$ with many (about $k/4$) of the vertices
in each of the other $A_i$. \cref{lem:multipartite} then implies that in such a situation
we find a copy of $K_{r+1}$ in $ A_1 \dot\cup A_2 \dot\cup \dotsm \dot\cup A_r$. So, if we find defects among the sets $A_1, \dots, A_r$ then we would
be in the situation (which we might naively anticipate is unlikely) that for each of the defects there is one of the sets not containing the defect with the property
that the defect forms few copies of $K_3$ with the vertices in that set. Now, it is important to note that the probability that a defect makes
\emph{no copies} of $K_3$ with one of the sets is not negligible. Indeed, we take
advantage of this fact in the proof of \cref{lem:new-lower}.

Given a collection of pairwise disjoint light sets $ A_1, \dots, A_r$ we say that a set $A_j$ \emph{weakly covers} a defect in a set
$A_i$ if the defect forms a $K_3$ with at most $ k^{2/3}$ of the vertices in $A_j$. 
In light of \cref{lem:multipartite}, we may assume
that if $ A_1 \dot\cup \dotsm \dot\cup A_r$ defines a $K_{r+1}$-free subgraph of $ G_{n,1/2}$ then every defect in 
$ A_1, \dots, A_r$ is weakly covered by one of the sets in the collection.

We first observe that only sets of size smaller than $k+1$ can weakly cover a defect in a $(k+1)$-set.  Let $\cB_5'$ be the event that there exist light $(k+1)$-sets $A$ and $B$ such that $A$ weakly covers a defect in $B$.  

\begin{fact}\label{fact:k+1-do-not-cover}
     If $\E [Y_k] \leq n^{1 + o(1)}$ then $\PP(\cB_5') = e^{-\Omega(k)}$
\end{fact}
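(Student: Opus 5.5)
A first-moment union bound over pairs $(A,B)$ of disjoint light $(k+1)$-sets together with a defect $e\in B$ should suffice. Disjointness is free here: if $A$ and $B$ intersect, $A\cup B$ is a light-ish set of size at least $k+2$, which is handled by $\cB_3'$. So I would restrict to disjoint pairs, or simply note that a shared vertex only helps the count.

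Fix disjoint $(k+1)$-sets $A,B$ and an edge $e=\{x,y\}\subset B$. The events involved are:
\begin{itemize}
\item $A$ is light;
\item $B$ is light and contains $e$;
\item $e$ forms triangles with at most $k^{2/3}$ vertices of $A$.
\end{itemize}
These depend on disjoint edge sets: inside $A$, inside $B$, and between $\{x,y\}$ and $A$. They are therefore independent.

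The probabilities are as follows.
\begin{itemize}
\item $A$ is light with probability at most $\binom{\binom{k+1}{2}}{\le k^{3/4}}2^{-\binom{k+1}{2}}$.
\item The same bound holds for $B$.
\item For each $v\in A$, the probability that $v$ is adjacent to both $x$ and $y$ is $1/4$, independently over $v$. So the number of such $v$ is $\mathrm{Bin}(k+1,1/4)$, and by Chernoff, $\PP(\mathrm{Bin}(k+1,1/4)\le k^{2/3})=e^{-\Omega(k)}$.
\end{itemize}

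Summing over choices gives
\[
\PP(\cB_5')\le \binom{n}{k+1}^2 \Bigl[\tbinom{\binom{k+1}{2}}{\le k^{3/4}}2^{-\binom{k+1}{2}}\Bigr]^2\cdot k^{3/4}\cdot e^{-\Omega(k)}
= \E[Y_{k+1}]^2\, e^{O(k^{3/4}\log k)}\, e^{-\Omega(k)} .
\]
The factor $k^{3/4}$ counts the choice of defect $e$ in $B$.

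It remains to bound $\E[Y_{k+1}]$. By Fact~\ref{fact:numerical}, $\E[Y_{k+1}]=n^{-1+o(1)}\E[Y_k]\le n^{o(1)}$.

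Here lies the main obstacle: $n^{o(1)}$ must be beaten by $e^{-\Omega(k)}=n^{-\Omega(1)}$. The $o(1)$ in Fact~\ref{fact:numerical} is really $O(\log\log n/\log n)$ in the regime $k=2\log_2 n+O(\log\log n)$. So $\E[Y_{k+1}]^2\le e^{O(\log\log n)}=k^{O(1)}$, and similarly $e^{O(k^{3/4}\log k)}=e^{o(k)}$. Both are absorbed by $e^{-\Omega(k)}$, which gives $\PP(\cB_5')=e^{-\Omega(k)}$. Care is only needed to confirm that the hypothesis $\E[Y_k]\le n^{1+o(1)}$ indeed gives $\E[Y_{k+1}]\le e^{o(k)}$.
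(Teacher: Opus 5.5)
Your proof is correct and is essentially the paper's argument: a union bound over a disjoint pair of light $(k+1)$-sets and a defect, giving $\E[Y_{k+1}]^2\, e^{O(k^{3/4}\log k)}\, k^{3/4}$ times the weak-cover probability $(3/4)^{k(1+o(1))}$ (your Chernoff bound), together with $\E[Y_{k+1}] = n^{-1+o(1)}\E[Y_k] \le n^{o(1)}$. Two small corrections: your ``main obstacle'' is not really one, since $k=\Theta(\log n)$ already makes any $n^{o(1)}$ equal to $e^{o(k)}$; your intermediate claim $\E[Y_{k+1}]^2 \le k^{O(1)}$ does not follow from the stated hypothesis, but it is not needed. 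Also, disjointness of $A,B$ should be taken from the definition of weak covering, which is only defined for pairwise disjoint sets, and not from $\cB_3'$, because $A\cup B$ is generally not light when the overlap is small.
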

\begin{proof}
    The probability that a given light $(k+1)$-set $A$ weakly covers a defect $e$ in a light $(k+1)$-set $B$ is at most $$\binom{k+1}{k^{2/3}} \left( \frac{3}{4} \right)^{k + 1 - k^{2/3}} = \left(\frac{3}{4} \right)^{k(1 + o(1))}\,.$$
    We now union bound over all potential configurations using $\E[Y_{k+1}]=n^{-1+o(1)}\E [Y_k] = n^{o(1)}$:
    \begin{equation*}
        \PP(\cB_5') \leq \EE[ Y_{k+1}]^2 \binom{ \binom{k+1}{2}}{ k^{3/4}}^2 k^{3/4} \left(\frac{3}{4} \right)^{k(1+o(1))} = \left(\frac{3}{4}\right)^{k(1 + o(1))}\,. \qedhere
    \end{equation*}
\end{proof}

As our goal here is to bound $X$ relative to the simple lower bound $kr$, our focus is on defects in sets $A_i$ such that $ |A_i| = k+1$. Our canonical picture is that each of
these defects is covered by an independent set $ A_\ell$ such that $ |A_\ell| =k$,
where each such $k$-set covers at most one defect. But we emphasize that other structures are possible that occur with roughly the same probability. 

For example, consider 
$ j \in \cJ$ such that $  \xi_j \le j -2$ (e.g. the case $r=11, j=3, \mu_3=2, \xi_3=1$ depicted in Figure~\ref{fig:K12} and shown again in Figure \ref{fig:two-options}).

\begin{figure}[ht]
    \centering
  \includegraphics[width=0.9\textwidth]{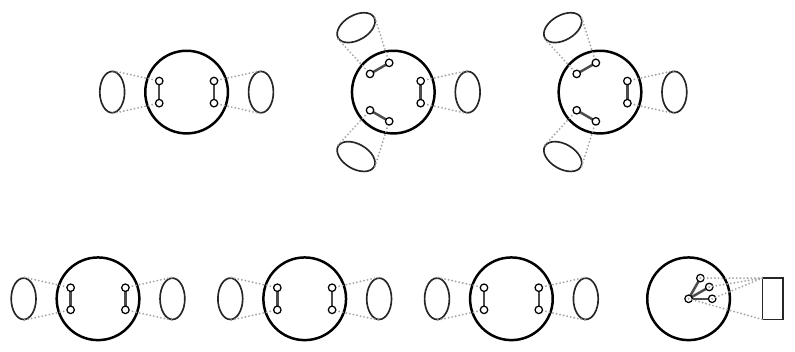}
\caption{Two structures that could achieve $ X \ge rk+3$ in the case $r=11$ (where we seek a $K_{12}$-free set) are shown. Recall that $\mu_3=2, \xi_3=1$, and whp 
$ Z_{k+1, 2} \ge 1$ implies that the standard structure, which is the
structure in the first row, appears whp. However, if $ Z_{k+1,2} \ge 3$ we could also achieve $ X \ge kr+3$ with the second structure
depicted here. In this second structure the gray rectangle is an independent
set of size $k-1$ that covers the three defects in the fourth $(k+1)$-set.
}
\label{fig:two-options}
\end{figure}

Our canonical picture, shown on the top line of Figure \ref{fig:two-options}, consists of $\xi_j = 1$ many $(k+1)$-sets with $\mu_j = 2$ defects each together with  $j - \xi_j = 2$ many $(k+1)$-sets with $\mu_{j} + 1 = 3$ defects.  Each of the defects is covered by its own $k$-set.  However, there are other ways to obtain a $K_{r+1}$-free set of size $kr + j$.  One of them is to combine three $(k+1)$-sets with $\mu_j = 2$ defects, each of which is covered by a $k$-set, together with one $(k+1)$-set with $3$ defects all of which are covered by a single $(k-1)$-set.  A typical $k$-set is not likely to cover more than one defect, however there are sufficiently many $(k-1)$-sets that it is possible for one of these to cover more than one defect.  Such an adjustment does not influence $X$ but leads to flexibility in the maximum structure. This trade-off---of introducing some $(k+1)$-sets with more than $\mu_j$ defects at the expense of using sets smaller than $k$ to cover them---is captured in Lemma \ref{lem:light-sets-covering-defects},  which is the technical core of the proof of \cref{lem:new-upper}.

For the remainder of the proof we
fix a set of at most $ \mu<r$ defects in each of the $ (k+1)$-sets among the $A_i$. 
We prove that each $(k-\rho)$-set $ A_\ell$  weakly covers at most $ 1 + (\mu+1) \rho$ of the specified 
defects in the $ (k+1)$-sets. To
be precise, let $\cE = \cE_{\mu, \rho, \beta}$ be the event that there is
a light set $ A$ such that $ |A| = k - \rho$ where $ 0 \le \rho \le r-2$ 
and edges $e_1, e_2, \dots, e_\beta$ not contained in $A$ such that
\begin{enumerate}
\item $e_i$ is an edge in a light set $A_i$ such that $ |A_i| =k+1$ and $A \cap A_i = \emptyset $ for $i=1,\dots, \beta$
\item if $i \neq \ell$ then $ A_i $ and $ A_\ell$ are either equal or disjoint, 
\item \label{it:at-most-mu} each set $A_i$ contains at most $ \mu$ of the edges $ e_1, \dots, e_\beta$,
\item at most $ k^{2/3}$ of the vertices in $A$ form a triangle with $e_i$ for $i =1, \dots , \beta$,  and
\item \label{it:beta-LB} $\beta > 1 + (\mu+1) \rho$.
\end{enumerate}

\begin{lemma}\label{lem:light-sets-covering-defects}
 If $\E [Y_k] \leq n^{1 + o(1)}$ then $ \PP[ \cE] \le n^{-1/8 + o(1)}$.
\end{lemma}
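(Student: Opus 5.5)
We prove the bound by a first-moment (union bound) calculation over all configurations witnessing $\cE_{\mu,\rho,\beta}$, with $0 \le \rho \le r-2$, $\mu < r$ and $\beta > 1 + (\mu+1)\rho$. A configuration consists of
\begin{itemize}
\item the light $(k-\rho)$-set $A$;
\item the distinct light $(k+1)$-sets among $A_1,\dots,A_\beta$, say $m$ of them, pairwise disjoint and disjoint from $A$, with $\lceil \beta/\mu\rceil \le m \le \beta$ by condition~\ref{it:at-most-mu};
\item the choice of the $\beta$ edges $e_i$ inside these sets;
\item for each $e_i$, the exceptional set of at most $k^{2/3}$ vertices of $A$ forming a triangle with $e_i$.
\end{itemize}
Since $A, A_1, \dots, A_\beta$ are disjoint, the edges inside $A$, inside each $A_i$, and between $A$ and $\bigcup A_i$ are independent. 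Hence the expected number of witnesses factors into three parts, which we bound one at a time.

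\textbf{Counting the sets.} By \cref{fact:numerical}, with the $\binom{\binom{k+1}{2}}{k^{3/4}} = n^{o(1)}$ factor for lightness, the expected number of light $(k-\rho)$-sets is $\E[Y_{k-\rho}]\, n^{o(1)} = \E[Y_k]\, n^{\rho + o(1)} \le n^{1+\rho+o(1)}$. Each light $(k+1)$-set contributes $\E[Y_{k+1}]\, n^{o(1)} = n^{o(1)}$, using $\E[Y_{k+1}] = n^{-1+o(1)}\E[Y_k]$. Choosing the $e_i$ inside the $A_i$ and the exceptional subsets of $A$ costs $k^{O(\beta)}\binom{k}{k^{2/3}}^{\beta} = e^{O(\beta k^{2/3}\log k)} = n^{o(\beta)}$. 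Since $\beta \le \mu m \le r^2 = n^{o(1)}$ and $m \le \beta$, all these $n^{o(1)}$ factors stay $n^{o(1)}$ overall.

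\textbf{Probability of the weak covering.} It remains to bound the probability that every $v \in A$ outside the union $E$ of the exceptional sets ($|E| \le \beta k^{2/3} = o(k)$) forms no triangle with any $e_i$. Let $U$ be the set of endpoints of the $e_i$ and $H$ the graph they span. Each of the at least $k - \rho - \beta k^{2/3}$ vertices $v \in A\setminus E$ must have $N(v)\cap U$ independent in $H$. These events are independent over $v$, since they involve disjoint sets of pairs. If the $e_i$ form a matching, each event has probability exactly $(3/4)^\beta$. The total is then $(3/4)^{\beta k(1+o(1))} = n^{-2\log_2(4/3)\beta + o(1)} \le n^{-0.83\beta}$, using $k = (2+o(1))\log_2 n$. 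The expected number of witnesses is then at most
\[
n^{1+\rho - 0.83\beta + o(1)}.
\]
Because $\beta \ge 2 + (\mu+1)\rho$ and $\mu \ge 1$ (if $\mu = 0$ there are no edges and the event is empty), the exponent is at most $1 + \rho - 1.66 - 1.66\rho < -1/8$. Summing over the $O(r^3)$ choices of $(\mu, \rho, \beta)$ gives the lemma.

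\textbf{Main obstacle: the $e_i$ need not be a matching.} The defects chosen within one $A_i$ may share vertices, for instance a star or a triangle inside $A_i$. In that case the probability that a random subset of $U$ is independent in $H$ exceeds $(3/4)^{|E(H)|}$: for a triangle it is $1/2 > (3/4)^3$. Two remedies are available.
\begin{itemize}
\item \emph{Save on the edge choices.} Overlapping edges leave fewer free choices for the $e_i$ and their endpoints. But we only pay $n^{o(1)}$ for that choice anyway, so nothing is gained directly there.
\item \emph{Compare per component of $H$.} Each component $C$ lies inside a single $A_i$ and has at most $\mu$ edges. Compute $p_C = i(C)/2^{|V(C)|}$, where $i(C)$ is the number of independent sets of $C$, and show $p_C \le (3/4)^{c\,|E(C)|}$ for a constant $c$ large enough that the exponent count above still closes.
\end{itemize}
We would try the second first. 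If it fails for dense components such as triangles, we would use that a triangle (or any non-forest) inside a $(k+1)$-set is itself rarer, or we would restrict to a maximal matching of $H$ with size at least $|E(H)|/(2\mu-1)$ and recheck the exponent. Settling this dependence cleanly is the crux. The rest is the bookkeeping above.
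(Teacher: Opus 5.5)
Your framework is the same first-moment count the paper uses: $n^{1+\rho+o(1)}$ for the light $(k-\rho)$-set $A$, $n^{o(1)}$ for each light $(k+1)$-set and for all choices of defects and exceptional sets, and independence of the $A$--$U$ edges over the vertices of $A$. Your matching case is correct and coincides with the paper's treatment of isolated defect edges. But the proof has a genuine gap exactly at the step you call the crux, and none of your proposed remedies closes it.

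\emph{Per-edge bound with one constant $c$.} A star $K_{1,s}$ has $p_C=(2^s+1)/2^{s+1}>1/2$, so its per-edge rate decays like $1/s$; already $K_{1,4}$ forces $c\le 0.55$. Then for $\rho=0$, $\beta=2$ your bound $n^{1-2\cdot 0.83c}$ is not even $o(1)$.

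\emph{Passing to a (maximal) matching of $H$.} This fails outright. A cherry (two defects sharing a vertex) has matching number $1$, so it yields only $(3/4)^{k(1+o(1))}=n^{-0.83+o(1)}$ against the $n^{1+o(1)}$ choices of $A$ when $\rho=0$. In general, one matching edge per component gives at best $n^{1+\rho-0.83(\rho+1)}$.

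\emph{Rarity of cycles.} This only gains $k^{O(1)}=n^{o(1)}$, and the problematic components (cherries, stars, paths) are forests anyway.

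The missing idea is to pay per connected component of $H$ rather than per edge.
\begin{itemize}
\item Since the $A_i$ are pairwise equal or disjoint, each component lies inside one $A_i$, so it has at most $\mu$ edges by item 3.
\item A component with at least two edges contains a path $P_3$. A vertex forms no triangle with either edge of a $P_3$ with probability $5/8$, since five of the eight subsets of its vertex set are independent in $P_3$. Hence such a component costs $(5/8)^{k(1+o(1))}=n^{2\log_2(5/8)+o(1)}$ with $2\log_2(5/8)<-1.35$, which on its own beats one factor of $n$.
\item An isolated edge costs only $n^{-0.83}$, so two of them are needed per factor of $n$.
\end{itemize}
Write $\beta_1$ for the number of isolated edges and $\beta_2$ for the number of components with at least two edges. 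Items 3 and 5 give $\beta_1+\mu\beta_2\ge\beta\ge 2+(\mu+1)\rho$. A short case check, using $\beta_1/2+\beta_2\in\frac12\mathbb{Z}$, upgrades this to $\beta_1/2+\beta_2\ge\rho+1$.

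Finally, $9/16\le(2/5)^{1/2}$ and $25/64\le 2/5$. So the expected number of configurations is at most
\[
n^{1+\rho+o(1)}(2/5)^{(\beta_1/2+\beta_2)\log_2 n}\le n^{\log_2(4/5)(\rho+1)+o(1)},
\]
which is well below $n^{-1/8}$.
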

\begin{proof}
We bound the expected number of such configurations. 
Consider a fixed $ \rho $ and the connected components in the graph formed by the $ \beta$ defects. Let $ \beta_1$ be the number
of isolated edges in this graph and let $ \beta_2$ be the number of connected components with at least 2 edges.
We claim that \cref{it:at-most-mu}~and~\cref{it:beta-LB} imply that we have
\begin{equation} \frac{ \beta_1}{2} + \beta_2 \ge \rho + 1. \label{eq:betas} \end{equation}
Indeed, if $ \mu =1 $ then $ \beta_2 = 0$ and (\ref{eq:betas}) follows immediately from \cref{it:beta-LB}. If $ \mu \ge 2$ then observe
that we have
\[ \beta_1 + \mu \beta_2 \ge \beta \ge 2 + (\mu + 1) \rho \ \ \ \Rightarrow \ \ \
\frac{ \beta_1}{2} + \beta_2 \ge \rho+ \frac{ \rho +2 - \beta_1 }{ \mu} + \frac{ \beta_1}{2}
\]
Noting that the left hand side of (\ref{eq:betas}) is in $ {\mathbb Z}^+ \cup \{0,1/2\}$, we see that (\ref{eq:betas}) follows by considering the cases $ \beta_1=0$; $\beta_1=1$; $ 2 \le \mu \le 4$; and finally $ \mu \ge 5$ and $ \beta_1 \ge 2$.

Recall that the probability that a fixed edge makes no copy of $K_3$ with a set of $t$ vertices is $ (3/4)^t$. Similarly, the probability that no edge in some fixed connected graph with at least two edges makes no copy of $K_3$ with the vertices in a set of $t$ vertices is at most $ (5/8)^t$.
Thus, for a given light set $A$ on $k - \rho$ vertices and a given collection of $(k+1)$-sets with defect graphs having $\beta_1$ isolated edges and $\beta_2$  connected components with at least $2$ edges, the probability that at most $k^{2/3}$ of the vertices in $A$ form a triangle with the defect graph is bounded above by 
\begin{align*}
    \binom{k-\rho}{k^{2/3}}^\beta \left( \frac{3}{4}  \right)^{\beta_1(k - k^{2/3})} \left( \frac{5}{8} \right)^{\beta_2(k - k^{2/3})} &\leq \left(\frac{9}{16} \right)^{(\beta_1 + o(1))\log_2 n}\left(\frac{25}{64} \right)^{(\beta_2 + o(1))\log_2 n} \\
    &\leq \left(\frac{2}{5} \right)^{(\frac{\beta_1}{2} + \beta_2 + o(1)) \log_2 n}\,.
\end{align*}
Write $Z_{k,\le i}$ for the number of $k$-sets with at most $i$ edges. Then by \cref{fact:numerical} 
$$ \E[Z_{k - \rho, \leq k^{3/4}}] \leq n^{o(1)} \E[Y_{k - \rho}] = n^{1 + \rho + o(1)}\qquad \text{ and } \qquad \E[ Z_{k+1, \leq k^{3/4}}] = n^{o(1)}\,.$$ The number of choices for the set $A$ together with the tuple of at most $\beta_1 + \beta_2$ light sets of size $k+1$ together with choices for the configuration of defect graph is bounded above by \begin{align*}
    \E[Z_{k - \rho, \leq k^{3/4}}] \sum_{j = 1}^{\beta_1 + \beta_2}\left[ \E[Z_{k+1,\leq k^{3/4}}]\binom{k^{3/4}}{\mu} \right]^j = n^{1 + \rho + o(1)} \cdot \sum_{j = 1}^{\beta_1 + \beta_2}\left[ 2^{O( (\log k)k^{3/4})} \right]^j = n^{1 + \rho + o(1)}
\end{align*}
where we used that $\mu,\rho < r$.
Combining the two displayed equations provides the bound \begin{equation*}
    \PP[\cE] \leq \left(\frac{2}{5} \right)^{(\frac{\beta_1}{2} + \beta_2 + o(1)) \log_2 n} \cdot n^{1 + \rho + o(1)} \leq \left(n^{\log_2 (2/5) + 1} \right)^{(\frac{\beta_1}{2} + \beta_2 + o(1))} \leq n^{-1/8 + o(1)}
\end{equation*}
where in the first inequality we used \eqref{eq:betas} and the second we used that $\frac{1}{2}(\log_2 (2/5) + 1) < -1/8.$
\end{proof}

\begin{remark}
We note that some simple counting illustrates that the bound on $ \beta$ in 
the definition of $ \cE$ should be sufficient for our purposes. We would like to conclude that 
we cannot gain an advantage in the random variable
$X$ by replacing a $k$-set in the canonical structure with a smaller set. If we replace a $k$-set
with a $(k - \rho)$-set then we should also replace at least $\rho$ other $k$-sets with $(k+1)$-sets in order to achieve a balance relative to $X$. Thus, the
new set of size $ k - \rho$ would need to weakly cover the defects in $ \rho$ new $ (k+1)$-sets (of which
there are at least $ \rho \mu$). Furthermore, the new $ (k-\rho)$-set would need to cover the defects that were potentially weakly covered by the $ \rho +1$ sets of size $k$ that were previously in the canonical structure (one that was replaced with a smaller set and $ \rho$ that were replaced with
larger sets). Thus, in order to be useful, this $ (k -\rho)$-set should weakly cover at least $ \mu \rho + \rho + 2 $ defects. If the event $ \cE$ does not hold, then this type of exchange is not possible. 
\end{remark}

\begin{remark}
    The bound in \cref{it:beta-LB} is not tight in general, it
    is just what is needed for the proof of \cref{lem:new-upper}. For example, 
    if $\mu\ge 2$ then we could replace \cref{it:beta-LB} with $\beta > 1 + \mu \rho$ and we see that using smaller sets generally loses ground.  However, we cannot replace \cref{it:beta-LB} with $\beta > 1 + \mu \rho$ when $\mu= 1$ as this stops working for some moderate $\rho$.
\end{remark}

As a last quasirandomness event, we define $$\cB_6' = \bigcup_{\mu \geq \mu_{j}, \rho \in [0,r - 2],\beta \in [0,r^2]} \cE_{\mu,\rho,\beta}$$
where we are union bounding over all choices of $\mu,\rho$ and $\beta$ that (naively) are possible.  An immediate corollary to \cref{lem:light-sets-covering-defects} by using the union bound is the following:
\begin{corollary}\label{cor:count-covering}
    If $\E [Y_k] \leq n^{1 + o(1)}$ then we have $\PP(\cB_6') \leq n^{-1/8 + o(1)}$.
\end{corollary}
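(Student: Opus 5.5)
The plan is a direct union bound over the index set in the definition of $\cB_6'$, combined with \cref{lem:light-sets-covering-defects}. By definition $\cB_6'$ is the union of the events $\cE_{\mu,\rho,\beta}$ over all $\mu \ge \mu_j$, $\rho \in [0,r-2]$ and $\beta \in [0,r^2]$. So
\[ \PP(\cB_6') \le \sum_{\mu,\rho,\beta} \PP(\cE_{\mu,\rho,\beta}). \]
Under the hypothesis $\E[Y_k] \le n^{1+o(1)}$, \cref{lem:light-sets-covering-defects} bounds each summand by $n^{-1/8+o(1)}$. The bound should be uniform in the parameters, because the $o(1)$ terms in that proof depend only on $\mu,\rho<r$, on $\beta \le r^2$, and on $r = o(\log\log n)$.

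The only real point is to check that the number of terms is $n^{o(1)}$, so that it is absorbed into the exponent. The relevant values of $\mu$ are those in $[\mu_j, r)$, since in the definition of $\cE$ we fixed at most $\mu<r$ defects per $(k+1)$-set. Any larger $\mu$ gives the same event as $\mu = r-1$, because only $\beta \le r^2$ edges are involved; more precisely, for $\mu \ge \beta$ condition~\ref{it:at-most-mu} becomes vacuous, so these events coincide. There are at most $r$ choices of $\rho$ and $r^2+1$ choices of $\beta$. Hence the total number of distinct events is $O(r^4) = (\log\log n)^{O(1)} = n^{o(1)}$.

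Multiplying gives $\PP(\cB_6') \le O(r^4)\, n^{-1/8+o(1)} = n^{-1/8+o(1)}$.

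The main (minor) obstacle is the uniformity of the $o(1)$ in \cref{lem:light-sets-covering-defects} across the parameters. I would handle it by noting that every error factor in that proof ($\binom{k-\rho}{k^{2/3}}^\beta$, $2^{O((\log k)k^{3/4})}$, the sum over $j \le \beta_1+\beta_2 \le r^2$) is $n^{o(1)}$ uniformly in $\mu,\rho<r$ and $\beta\le r^2$.
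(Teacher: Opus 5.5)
Your proposal is correct and is essentially the paper's proof: a union bound over $(\mu,\rho,\beta)$ in which each event is controlled by \cref{lem:light-sets-covering-defects}, and there are only $r^{O(1)} = n^{o(1)}$ distinct events. One claim needs correcting, namely that every $\mu > r-1$ gives the same event as $\mu = r-1$; this is false for $\rho = 0$ and $r-1 < \mu < \beta$. Your refined observation repairs it: item~3 is vacuous once $\mu \ge \beta$, while for $\rho \ge 1$ item~5 forces $\mu < \beta$, which is how the paper bounds $\mu$. This leaves $O(r^2)$ values of $\mu$ and hence $O(r^5) = n^{o(1)}$ events, so the conclusion stands, and it also covers the case $\rho = 0$ that the paper's one-line count glosses over.
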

\begin{proof}
    There are at most $O(r^3)$ choices for $\beta,\rho$.  By \cref{it:beta-LB}, for each choice of $\beta \in [0,r^2]$ and $\rho \neq 0$ we have at most $O(r^2)$ choices of $\mu$.  Taking a union bound over these choices and applying \cref{lem:light-sets-covering-defects} completes the proof.
\end{proof}

We now have all the ingredients needed to quickly complete the proof of Lemma~\ref{lem:new-upper}.

\begin{proof}[Proof of \cref{lem:new-upper}]
    We assume that none of the events $\cB_1',\ldots,\cB_6'$ hold by \cref{lem:no-fewer-defects}, \cref{lem:far}, \cref{fact:no-light-k+2}, \cref{lem:multipartite}, \cref{fact:k+1-do-not-cover}, and \cref{cor:count-covering}.  We also assume that $Z_{k+1,\mu_{j}} \leq \xi_{j} - 1$ and want to show that $X \leq kr + (j-1)$.  By event $\cB_2'^c$, if we have $X \geq rk$ then we must have that it can be partitioned into $r$ light sets.  
    By $\cB_3'^c$, each set in the partition has size at most $k+1$.  Suppose we have a partition of pairwise disjoint light sets $A_1,\ldots,A_r$ so that $k - (r - 1) \leq |A_i| \leq k+1$ for all $i$ and so that $A_1 \cup A_2 \cup \ldots \cup A_r$ is $K_{r+1}$ free.  Let $b$ be the number of $A_j$ with size at most $k$ and reorder the sets so that $A_1,\ldots,A_b$ are these sets; for $i = 1,\ldots, b$ define $\rho_i=k-|A_i|$  so that $|A_i| = k - \rho_i$.  Let $a$ be the number of $A_i$ of size $k+1$.  The size of the set $A_1 \cup A_2 \cup \ldots \cup A_r$ is $$rk + a - \sum_{i = 1}^b \rho_i =: rk + \delta\,.$$
    Our goal is to show that $\delta \leq j - 1$.  As such, we may assume that $a \geq j$ since $\rho_i \geq0$ for all $i$.  
    Write $\mu = \mu_{j}$ for simplicity and note that  no $(k+1)$-set has fewer than $\mu$ defects by $\cB_1'^c$. 
    Since $Z_{k+1,\mu} \leq \xi_j - 1$, and $a$  sets among the $A_i$ have size $k+1$, at least  $a - (\xi_j - 1)$ sets among the  $A_i$ have at least $\mu+1$ defects.  
    Consequently, the total number of defects is at least 
    $$(\xi_{j} - 1)\mu + (a - (\xi_{j} - 1))(\mu + 1)\,.$$
    Applying event $\cB_6'^c$, each $A_i$ covers at most $1 + (\mu+1)\rho_i$ defects.  Since all defects are covered by $\{A_i\}_{i = 1}^b$ we must have 
    $$(\xi_{j} - 1)\mu + (a - (\xi_{j} - 1))(\mu + 1) \leq \sum_{i = 1}^b(1 + (\mu + 1)\rho_i)\,.$$
    Rearranging and recalling $a - \sum_i \rho_i = \delta$ gives 
    $$\delta \leq \frac{b + \xi_{j} - 1}{\mu + 1} = \frac{b + j( \mu+2)-r-1}{ \mu+1} = j + \frac{b + j - r -1}{\mu+1}\,.$$
    To complete the proof recall that we assume $ a \ge j$ which implies $ b + j \le r$. Therefore,
    $\delta < j$, as desired.
\end{proof}

\section{Proof of Theorem~\ref{thm:color-critical-concentration}}
In this section, we give the proof of Theorem~\ref{thm:color-critical-concentration}.
When we say that $G$ is $F$-free, we mean that $G$ contains no subgraph isomorphic to $F$. We do not impose any requirement on the subgraph being induced.

We prove Theorem~\ref{thm:color-critical-concentration} via a first moment calculation.  We need the following result of Pr\"omel and Steger~\cite{PS2} on the number of $F$-free graphs.

\begin{theorem} [Pr\"omel-Steger~\cite{PS2}] \label{thm:PSFfree}
Fix $r \ge 2$ and an $r$-critical graph $F$. Then for $m$ sufficiently large, the number of $F$-free graphs on vertex set $[m]$ is at most
$$2^{\left(1-\frac1r\right)\frac{m^2}{2} + m\log_2r +\Theta(\log_2m)}.$$
\end{theorem}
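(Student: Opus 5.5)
This result is imported from Pr\"omel and Steger~\cite{PS2}. If I had to prove it myself I would follow the standard ``typical structure'' route. The plan is to show that the count is dominated by $r$-partite graphs, and that those contribute exactly $2^{(1-1/r)m^2/2 + m\log_2 r + \Theta(\log_2 m)}$.

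\textbf{Step 1: count $r$-partite graphs.} Every $r$-partite graph is $F$-free, since $\chi(F)=r+1$. The number of $r$-partite graphs on $[m]$ is at most
\[ \sum_{(V_1,\dots,V_r)} 2^{\sum_{i<\ell}|V_i||V_\ell|}. \]
I would write $|V_i| = m/r + d_i$ with $\sum_i d_i = 0$. Then $\sum_{i<\ell}|V_i||V_\ell| = (1-1/r)m^2/2 - \tfrac12\sum_i d_i^2$.
\begin{itemize}
\item The number of ordered partitions with a given size profile is a multinomial coefficient. It is at most $r^m$ times a polynomial factor, and it is comparable to $r^m m^{-(r-1)/2}$ near balance.
\item Summing the Gaussian weights $2^{-\sum_i d_i^2/2}$ over integer profiles gives an $O(1)$ factor.
\end{itemize}
Together these give both the upper bound and the matching lower bound $2^{(1-1/r)m^2/2+m\log_2 r-O(\log m)}$. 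The lower bound uses that most graphs counted with a balanced partition have only $O(r!)$ valid partitions. Hence the whole theorem reduces to showing that non-$r$-partite $F$-free graphs contribute at most a constant factor more.

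\textbf{Step 2: graphs far from $r$-partite.} I would apply the hypergraph container lemma for $F$-free graphs together with Erd\H{o}s--Simonovits stability. Every $F$-free graph lies in one of $2^{o(m^2)}$ containers, and each container has at most $\mathrm{ex}(m,F)+o(m^2)$ edges. Each container is also either $o(m^2)$-close to $r$-partite or has at most $\mathrm{ex}(m,F)-\Omega(m^2)$ edges. Since $F$ is $r$-critical, $\mathrm{ex}(m,F)=t_r(m)$ for large $m$. So graphs that are $\eta m^2$-far from $r$-partite number at most $2^{t_r(m)-\Omega(m^2)}$, which is negligible. This is the same mechanism as in \cref{prop:EKR-BS}.

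\textbf{Step 3: graphs that are $o(m^2)$-close to $r$-partite but not $r$-partite.} This is the main obstacle. For such a graph $G$, fix a max-cut $r$-partition $V_1,\dots,V_r$. I would first handle the few ``bad'' vertices that have linearly many neighbours in their own part; there are only $o(m)$ choices for them, and each costs a factor $2^{-\Omega(m)}$ in crossing edges. After that, every vertex has $\Omega(m)$ neighbours in each other part.

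The key step uses color-criticality. Take an edge $xy$ inside some part. If the common neighbourhoods of $x$ and $y$ in the other parts contained a suitable copy of a complete $(r-1)$-partite graph with parts of bounded size, then $G$ would contain $F$. By a K\H{o}v\'ari--S\'os--Tur\'an / supersaturation argument, avoiding this forces $\Omega(m)$ crossing non-edges near $x$ or $y$. So each in-part edge should cost a factor $2^{-\Omega(m)}$. I would then show that the number of graphs with $s\ge1$ in-part edges is at most $2^{t_r(m)} r^m \binom{m^2}{s} 2^{-\Omega(sm)}$. Summing over $s\ge1$ gives $o(2^{t_r(m)}r^m)$.

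The delicate point is making the $2^{-\Omega(m)}$ cost per in-part edge hold simultaneously for many in-part edges without double counting. I would first try a greedy choice of a large matching within the in-part edges, with disjoint witness sets, bootstrapping from Step 2's $o(m^2)$ closeness.
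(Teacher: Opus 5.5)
The paper gives no proof of this statement; it imports it from Pr\"omel--Steger. The closest in-paper argument is the appendix proof of \cref{prop:EKR-BS}, which carries out the analogue of your Step~3 for $K_{r+1}$. Your Steps~1 and~2 are sound. Step~1 also gives the matching lower bound, which the paper tacitly uses when it writes $\E[X_{rk}]=n^{o(1)}\overline{N}_{n,m}$.

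The gap is in Step~3, which you correctly flag as the main obstacle, but the mechanism you propose does not work. Your intermediate bound of $2^{t_r(m)}r^m\binom{m^2}{s}2^{-\Omega(sm)}$ graphs with $s$ in-part edges is false. Take $F=K_3$, $r=2$ and a balanced bipartition $V_1\cup V_2$. Let the in-part graph be a star at $v\in V_1$ with $s=\lfloor\sqrt m\rfloor$ leaves in $V_1$. Let $v$ have exactly $s+1$ neighbours $W\subseteq V_2$, let no leaf have a neighbour in $W$, and leave all other crossing pairs arbitrary. Such graphs are triangle-free, and the bipartition is (typically the unique) max-cut, since moving $v$ creates $s+1$ in-part edges. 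Yet they number $2^{e(K_\Pi)-m/2-s(s+1)+o(m)}=2^{e(K_\Pi)-3m/2+o(m)}$, far more than $2^{e(K_\Pi)-\Omega(m^{3/2})}$.

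Here $v$ is not one of your ``bad'' vertices, since its in-part degree is sublinear. Still, it has only $s+1$ neighbours in $V_2$. So the claim that afterwards every vertex has $\Omega(m)$ neighbours in each other part does not follow, and in-part edges clustered at such a vertex cost only $2^{-(s+1)}$ each, not $2^{-\Omega(m)}$.

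A greedy matching cannot repair this. From an in-part graph of maximum degree $\Delta$ you extract only about $s/(2\Delta)$ matching edges, so you get a saving of $2^{-\Omega(sm/\Delta)}$. This beats the $\binom{m^2}{s}$ choices of in-part edges only if $\Delta\lesssim m/\log(m^2/s)$, which is far below a linear threshold when $s$ is small. In the other direction, charging a vertex of linear in-part degree only $2^{-\Omega(m)}$ does not pay for the up to $2^{\Theta(m)}$ choices of its in-part neighbourhood.

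The missing idea is the star/matching dichotomy of the appendix. Because the partition minimises in-part edges, a vertex with $D$ in-part neighbours has at least $D$ neighbours in every other part. An in-part star of size $D$ then forces a loss of $2^{-\Omega(D^2)}$ (\cref{lem:star}). Otherwise the in-part graph has a matching of size at least $t/(Dr)$ inside one part, costing $2^{-\Omega(tm/D)}$ (\cref{lem:matching}). With the $t$-dependent choice $D=(mt)^{1/3}$, both savings are $2^{-\Omega_r(t(m^2/t)^{1/3})}$. If you count in-part edge sets by $\binom{m^2}{t}\le 2^{t\log_2(em^2/t)}$ rather than $(m^2)^t$, this covers all $1\le t\le \eta m^2$ for a small constant $\eta=\eta(r)$. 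That is compatible with your constant-$\eta$ container/stability Step~2; the appendix needs efficient containers only because it uses $(m^2)^t$.

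For a general $r$-critical $F$ you would still have to prove $F$-analogues of these two lemmas. Note also that the embedding requires a copy of $K_r(|F|)$ with $x,y$ in its $V_1$-class, not merely a complete $(r-1)$-partite graph in $N(x)\cap N(y)$. The colour class of the critical edge can contain further vertices; for $C_5$ you need a path $x\,a\,b\,c\,y$ with $b\in V_1$.
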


With this in mind, define $$N_m := 2^{\left(1-\frac1r\right)\frac{m^2}{2} + m\log_2r}\,,\qquad \overline{N}_{n,m} = \binom{n}{m} N_m 2^{-\binom{m}{2}}\,.$$

We will compare $\overline{N}_{n,m}$---and thus the expected number of $F$-free sets---to the expected number of $k$ independent sets.  Recall that $Y_k$ is the number of independent sets of size $k$ in $G_{n,1/2}.$

\begin{lemma}\label{lem:compare-to-IS}
    Suppose that $m = r k$ is so that $\overline{N}_{n,m} = n^{\Theta(1)}$.  Then $\overline{N}_{n,m} = n^{o(1)}\cdot (\E [Y_k])^r\,.$
\end{lemma}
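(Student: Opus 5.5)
Since $m = rk$, we have $N_m 2^{-\binom{m}{2}} = 2^{-\binom{m}{2} + (1-\frac1r)\frac{m^2}{2} + m\log_2 r}$. The plan is to compare this exponent with $-r\binom{k}{2}$ and to compare $\binom{n}{rk}$ with $\binom{n}{k}^r$; the error factors should all be $n^{o(1)}$.

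\textbf{Exponent.} With $m = rk$ we have $\binom{m}{2} - (1-\frac1r)\frac{m^2}{2} = \frac{m^2}{2r} - \frac m2 = \frac{rk^2}{2} - \frac{rk}{2} = r\binom{k}{2}$. Hence
\[ N_m 2^{-\binom{m}{2}} = r^{m} 2^{-r\binom{k}{2}}. \]
This is the balanced Tur\'an count, and the identity is exact.

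\textbf{Binomials.} It remains to show
\[ \binom{n}{rk} r^{rk} = n^{o(1)}\binom{n}{k}^r. \]
First we pin down $k$. Since $\overline N_{n,m} = n^{\Theta(1)}$, the identity above gives $\binom{n}{rk} r^{rk} 2^{-r\binom k2} = n^{\Theta(1)}$. Taking logarithms, $rk(\log_2 n - \log_2 k + O(1)) - rk^2/2 = \Theta(\log n)$, which forces $k = 2\log_2 n + O(\log\log n)$ for fixed $r$. In particular $k = O(\log n)$, so $m^2 = o(n)$.

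With $m^2 = o(n)$ we can use $\binom{n}{m} = (1+o(1)) n^m/m!$ and likewise $\binom{n}{k} = (1+o(1)) n^k/k!$. The ratio then becomes
\[ \frac{\binom{n}{rk} r^{rk}}{\binom{n}{k}^r} = (1+o(1))\frac{r^{rk}(k!)^r}{(rk)!}. \]
This is $r^{rk}$ divided by the multinomial coefficient $\binom{rk}{k,\dots,k}$. By Stirling, that multinomial equals $r^{rk}\,(2\pi k)^{-(r-1)/2} r^{1/2}(1+o(1))$. So the ratio is $k^{O(r)} = (\log n)^{O(1)} = n^{o(1)}$.

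\textbf{Conclusion and main obstacle.} Multiplying the two pieces gives $\overline N_{n,m} = n^{o(1)}\binom{n}{k}^r 2^{-r\binom k2} = n^{o(1)}\E[Y_k]^r$. The only step needing care is the preliminary one: extracting $k = O(\log n)$ from the hypothesis $\overline N_{n,m} = n^{\Theta(1)}$, so that the approximations $\binom nm \approx n^m/m!$ are valid. Everything after that is routine Stirling bookkeeping.
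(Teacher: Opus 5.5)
Your proof is correct and follows essentially the same route as the paper: the exact exponent identity reducing $N_m2^{-\binom m2}$ to $r^m2^{-r\binom k2}$, followed by comparing $\binom{n}{rk}r^{rk}$ with $\binom nk^r$ using only $k=O(\log n)$. The paper does this comparison via $\binom nm=n^{o(1)}(en/m)^m$, while you use $n^m/m!$ and Stirling on the multinomial. One small overstatement: the hypothesis does not force $k=2\log_2 n+O(\log\log n)$, since bounded $k$ also gives $\overline N_{n,m}=n^{\Theta(1)}$. This does no harm, because you only use $k=O(\log n)$, which does follow, e.g.\ from $\binom{n}{rk}\le n^{rk}$.
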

\begin{proof}
First write \begin{align*}
    \overline{N}_{n,m} \cdot (\E [Y_k])^{-r} &=\binom{n}{m} \exp_2\left( \left(1 - \frac{1}{r} \right)\frac{m^2}{2} + m \log_2 r - \binom{m}{2}\right) \cdot \binom{n}{k}^{-r}\exp_2\left(r \binom{k}{2} \right)\,.
\end{align*}
 Since  $\overline{N}_{n,m} = n^{\Theta(1)}$ we have $m = O(\log n)$ and so we have \begin{align*}
    \binom{n}{m} \cdot \binom{n}{k}^{-r} = n^{o(1)} \left(\frac{e n}{m} \right)^m \left( \frac{e n }{k} \right)^{-kr} = n^{o(1)} r^{-m}
\end{align*}
where in the last equality we recalled that $m = kr$.  Noting that \begin{align*}
    \left(1 - \frac{1}{r} \right)\frac{m^2}{2}  - \binom{m}{2} + r \binom{k}{2} = 0
\end{align*}
completes the proof.
\end{proof}

Using \cref{thm:PSFfree} we will be able to find an expression for the value of $m_0$ in \cref{thm:color-critical-concentration} up to lower order terms.

\begin{lemma}\label{lem:m_0-calculation}
    Let $X_m$ be the number of $F$-free sets of size $m$.  Define $m_0 = \min\{ m : \E [X_m] \leq 1\}\,.$ Then $m_0 = 2r \log_2 n + O(\log \log n).$
\end{lemma}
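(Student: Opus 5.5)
We will sandwich $\E[X_m]$ between multiples of $\overline{N}_{n,m}$ and locate where $\overline{N}_{n,m}$ crosses $1$. Since every $r$-partite graph is $F$-free (as $\chi(F)=r+1$), the number of $F$-free graphs on $[m]$ is at least the number of $r$-partite graphs with a fixed balanced partition, which is $2^{(1-\frac1r)\frac{m^2}{2}-O(r)}$. Counting $r$-partite graphs over all balanced partitions, up to the overcount of a graph having several partitions, gives the sharper lower bound $N_m 2^{-O(\log m)}$. Combined with \cref{thm:PSFfree}, the number $f(m)$ of $F$-free graphs on $[m]$ then satisfies
\[ f(m) = N_m\, 2^{\Theta(\log_2 m)}, \]
so that $\E[X_m] = \binom{n}{m} f(m) 2^{-\binom{m}{2}} = \overline{N}_{n,m}\, m^{\Theta(1)}$. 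For our purposes $m=O(\log n)$, so this factor is only $(\log n)^{O(1)}$; for the crude statement even the trivial lower bound $2^{(1-1/r)m^2/2 - O(m)}$ suffices.

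The main work is computing the logarithm of $\overline{N}_{n,m}$ as a function of $m$. Using $\binom{n}{m} = (en/m)^m e^{O(m^2/n)}$, we get
\[ \log_2 \overline{N}_{n,m} = m\left[\log_2 n - \log_2 m + \log_2 e + \log_2 r - \frac{m}{2r} + \frac12\right] + o(1). \]
This is consistent with \cref{lem:compare-to-IS}, since for $m=rk$ it equals $r\log_2\E[Y_k]$ up to lower order. The bracket equals zero precisely when $m = 2r\log_2 n - 2r\log_2 m + O(r)$, that is, at $m = 2r\log_2 n + O(\log\log n)$ for fixed $r$. Writing $m = 2r\log_2 n - t$, the bracket is $\frac{t}{2r} - \log_2\log n + O(1)$.

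It follows that if $t \ge C\log\log n$ for a large constant $C$, then the exponent is at least $m\cdot\Omega(\log\log n)$, which swamps the $(\log n)^{O(1)}$ correction, so $\E[X_m]>1$. Conversely, if $t \le -C\log\log n$, the exponent is at most $-m\,\Omega(\log\log n)$ and $\E[X_m]<1$. Hence $m_0 \in [2r\log_2 n - C\log\log n,\, 2r\log_2 n + C\log\log n]$.

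One point needs care: $m_0$ is the minimum $m$ with $\E[X_m]\le 1$. We need every $m$ below $2r\log_2 n - C\log\log n$ to satisfy $\E[X_m]>1$, not just $m$ near that threshold. For this we use that the bracket is decreasing in $m$, so the exponent is positive throughout that range. Alternatively, the ratio $\E[X_{m+1}]/\E[X_m] \approx n\,2^{-m(1/r)\cdot\ldots}$ is decreasing, giving log-concavity.

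The main obstacle I expect is keeping the error terms of \cref{thm:PSFfree} and the lower-bound count uniform. These are $2^{\Theta(\log m)}$, which is negligible against the $2^{\Omega(m\log\log n)}$ gaps, so the argument should go through.
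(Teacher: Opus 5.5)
Your proof is correct and follows the paper's route: the same computation of $\log_2 \overline{N}_{n,m}$, with bracket $\log_2 n - \log_2 m - \frac{m}{2r} + O(1)$, compared against the Pr\"omel--Steger bound. Beyond the paper, you make explicit two points its terse proof leaves implicit: the lower bound on the count of $F$-free graphs via $r$-partite graphs (the trivial one suffices, as you note), and the monotonicity of the bracket, which is needed because $m_0$ is a minimum. Your one slip, dropping the $(2\pi m)^{-1/2}$ factor in $\binom{n}{m}$, costs only $O(\log m)$ in the exponent and is harmless.
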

\begin{proof}
    Set $m = \alpha \log_2 n$ for $\alpha = O(1)$ to be chosen later and note that \begin{align*}
        \overline{N}_{n,m} &= \binom{n}{m} \exp_2\left(\left(1 - \frac{1}{r} \right)\frac{m^2}{2} + m \log_2 r - \binom{m}{2} \right) \\
        &= \exp_2 \left(m \left(\log_2 e + \log_2 n - \log_2 m + \left(1-\frac{1}{r} \right)\frac{m}{2} + \log_2 r - \frac{m}{2} + \frac{1}{2}  + o(1) \right) \right) \\
        &= \exp_2 \left(m \left( \log_2 n - \log_2 m  -\frac{m}{2r}   + O_r(1) \right) \right) \\
        &=\exp_2 \left(m \left( \left(1- \frac{\alpha}{2r} \right)\log_2 n - \log_2 m  + O_r(1) \right) \right)\,.
    \end{align*}
    Applying \cref{thm:PSFfree} completes the proof.
\end{proof}

Our last calculation before proving \cref{thm:color-critical-concentration} is to show that $\E [X_m]$ changes by a factor of $n^{1 + o(1)}$ as $m$ changes, provided $m$ is near $m_0$. 

\begin{lemma}\label{lem:m_0-shift}
    Let $X_m$ be the number of $F$-free sets of size $m$.  Define $m_0 = \min\{ m : \E [X_m] \leq 1\}$.  For $|\ell| = O(1)$ we have 
    $$\frac{\E [X_{m_0}]}{\E [X_{m_0 + \ell}]} = n^{\ell + o(1)}\,.$$  
\end{lemma}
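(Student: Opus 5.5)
The plan is to compute $\E[X_m]$ directly as $\binom{n}{m} f(m) 2^{-\binom{m}{2}}$, where $f(m)$ is the number of $F$-free graphs on vertex set $[m]$. Then I control the ratio of consecutive terms with \cref{thm:PSFfree} together with a matching lower bound on $f(m)$.

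For the lower bound I use that every $r$-partite graph is $F$-free, since $\chi(F)=r+1$. Counting subgraphs of a balanced complete $r$-partite graph on $[m]$, over the $r^m/m^{O(r)}$-ish ordered balanced partitions, divided by at most $r!$ and by overcounting, gives $f(m) \ge 2^{(1-1/r)m^2/2 + m\log_2 r - O(\log m)}$. Some care is needed with rounding when $r\nmid m$, but this costs only $O(r^2)$ in the exponent. More simply, the crude bound $f(m) \ge 2^{t_r(m)}$, with $t_r(m)$ the Tur\'an number, loses $m\log_2 r$. That loss is $O(\log n)$, but it is $O(\log n)$ in the ratio only at scale $\ell$, which is too coarse. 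So I keep the partition count. Combined with \cref{thm:PSFfree}, this gives
\[ \E[X_m] = \overline{N}_{n,m}\, 2^{O(\log_2 m)} = \overline{N}_{n,m}\, n^{o(1)} \]
for $m=O(\log n)$.

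It therefore suffices to compute $\overline{N}_{n,m}/\overline{N}_{n,m+1}$ for $m = m_0+O(1)$, where $m_0 = 2r\log_2 n+O(\log\log n)$ by \cref{lem:m_0-calculation}. Directly,
\[ \frac{\overline{N}_{n,m+1}}{\overline{N}_{n,m}} = \frac{n-m}{m+1}\, \exp_2\!\Big( (1-\tfrac1r)\tfrac{2m+1}{2} + \log_2 r - m \Big) = \frac{n}{m}\, 2^{-m/r + O(1)} . \]
Plugging in $m = 2r\log_2 n + O(\log\log n)$ gives $2^{-m/r} = n^{-2}(\log n)^{O(1)}$, so the ratio is $n^{-1+o(1)}$. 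Telescoping over the $|\ell|=O(1)$ steps gives $\E[X_{m_0}]/\E[X_{m_0+\ell}] = n^{\ell+o(1)}$, with the $n^{o(1)}$ factors from the sandwich absorbed.

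The main obstacle is the matching lower bound on $f(m)$ to within $2^{O(\log m)}$, or at least $2^{o(\log n)}$. The exponent $m\log_2 r$ arises from the choice of partition, and this term is of the same order as $\log n$, so it must be matched exactly. I would prove it by fixing balanced part sizes, whose number of ordered partitions is $m!/\prod (m/r)! = r^m m^{-O(r)}$ by Stirling. Each subgraph of the complete $r$-partite graph with at least, say, $\Omega(m^2)$ edges across each pair of parts determines its partition up to permutation, except for a negligible fraction, so overcounting is bounded by $r!$ times $1+o(1)$. The resulting error $m^{O(r)} = n^{o(1)}$ is acceptable, since $r$ is fixed.
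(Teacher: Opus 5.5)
Your proposal is correct and follows the paper's proof. You write $\E[X_m]=n^{o(1)}\overline{N}_{n,m}$ via \cref{thm:PSFfree}, compute $\overline{N}_{n,m}/\overline{N}_{n,m+1}=\frac{m+1}{n-m}\,2^{m/r+O(1)}=n^{1+o(1)}$ for $m=2r\log_2 n+O(\log\log n)$, and iterate $\ell$ times; the only difference is that the paper reads the $\Theta(\log_2 m)$ in \cref{thm:PSFfree} as a two-sided estimate, whereas you justify the lower bound by counting $r$-colorable graphs. For that lower bound, having many edges between parts does not by itself force a unique partition, but the standard fact that almost every subgraph of a balanced $K_\Pi$ is uniquely $r$-colorable suffices, and in fact only $2^{o(m)}$ precision is needed.
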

\begin{proof}
    By \cref{lem:m_0-calculation} we have that $m_0 = 2r \log_2 n + o(\log n).$  For any $m = (2r + o(1)) \log_2 n$, use  \cref{thm:PSFfree} to compute
    \begin{align*}\frac{\E [X_{m}]}{\E [X_{m + 1}]}  &= n^{o(1)} \frac{\overline{N}_{n,m}}{\overline{N}_{n,m+1}} \\
    &= n^{o(1)} \binom{n}{m} \cdot \binom{n}{m+1}^{-1} 2^{-m} \\
    &\qquad \times \exp_2\left(\left(1 - \frac{1}{r} \right)\frac{m^2}{2} + m \log_2 r -  \left(1 - \frac{1}{r} \right)\frac{(m+1)^2}{2} - (m+1) \log_2 r\right) \\
    &= n^{o(1)} \cdot \frac{1}{n} \cdot  \exp_2\left( \frac{m}{r} \right) \\
    &= n^{1 + o(1)} \,. 
    \end{align*}
    Iterating this bound $\ell$ times completes the proof.
\end{proof}

\begin{proof}[Proof of \cref{thm:color-critical-concentration}]
    Let $X_m$ be the number of $F$-free sets of size $m$ and define 
    $$m_0 := \min\{ m : \E [X_m] \leq 1\}\,.$$ We proceed in two cases depending on if $\E[ X_{m_0} ] \leq n^{-1/{2r}}$ or $\E[ X_{m_0}] \geq n^{-1/{2r}}.$  If $\E[ X_{m_0} ] \leq n^{-1/(2r)}$ then set $M = m_0$, otherwise set $M = m_0 + 1.$  In either case, \cref{lem:m_0-shift} implies that $\E[X_M] \leq n^{-1/(2r)}$ and so $\PP(\alpha_F(G_{n,1/2}) \geq M) = o(1)$.  Now, let $k$ be the largest integer so that $kr < M - 1$.  Note that $kr \in [M- r-1, M - 2]$ so we can write $kr=M-\ell$ where $ \ell \in \{2,3,\ldots,r+1\}$.  By \cref{lem:m_0-shift} we have 
 \begin{equation}\label{eq:shift-X-kr}
 \E[X_{kr}] = \E[X_{M-\ell}] = \E[X_{M}] \,n^{\ell+o(1)} < n^{2 + r+o(1)}.
 \end{equation}

    We claim that \begin{equation}\label{eq:X-kr-LB}
        \E[X_{kr}] \geq n^{1/2} \,.
    \end{equation}
    To see this, assume first that $M = m_0$ and note that we must have $\E[X_{m_0}] \geq n^{-1 + o(1)}$ by \cref{lem:m_0-shift}.  Noting that $\E[X_{kr}] = \E[X_{m_0}]\, n^{\ell+o(1)}$ by \eqref{eq:shift-X-kr} shows \eqref{eq:X-kr-LB} in this case.  If  $M = m_0+1$, then we have $\E[X_{m_0}] \geq n^{-1/2r}$ implying that $\E[X_{kr}] \geq \E[X_{m_0}] n^{\ell-1 + o(1)}>
    \E[X_{m_0}] n^{1 + o(1)}$ thus showing \eqref{eq:X-kr-LB}.

    Note that by \cref{thm:PSFfree} and \cref{lem:compare-to-IS} we have $$\E[X_{rk}] = n^{o(1)} \overline{N}_{n,m} = n^{o(1)} (\E[Y_k])^r$$
    which implies that $\E[Y_k] \geq n^{1/(2r) + o(1)}.$  The proof of \cref{lem:new-lower} for $j = r$ applied to $(k-1)$ shows that with high probability there are $r$ many disjoint independent sets of size $(k-1) + 1 = k$.  Taking the induced subgraph with this vertex set yields a graph with  $kr$ vertices that has chromatic number at most $r$, thus implying it is $F$-free.  This shows that $\alpha_F(G_{n,1/2}) \geq kr= M -\ell \ge (M-1) -r$ with high probability.  Together with the upper bound, this shows $$\alpha_F(G_{n,1/2}) \in [(M-1) -r, M-1]$$ with high probability, completing the proof.
\end{proof}                                                                                                    

\section{Concluding Remarks}

\textbf{Extending beyond $p = 1/2$.} 
It is a natural to study 
$\alpha_{r+1}(G_{n,p})$ for $p \ne 1/2$. It appears that there are two relevant transitions for $p$.  Define \begin{equation*}
    (1 - p_-) + p_-(1 - p_-)^2 = (1 - p_-)^{1/2} \quad \text{ and } \quad p_+ = \frac{\sqrt{5} - 1}{2}\,.
\end{equation*}
The value $p_- \approx 0.323$ is when it becomes possible for an independent $k$-set to cover more than one defect; $p_+$ is when it becomes unlikely for an independent $k$-set to cover any defects.  We suspect the relevant regimes are broken apart in terms of $p_-$ and $p_+$

\begin{problem}
    Is it the case that for fixed $p \in (p_-,p_+)$ a version of \cref{thm:poisson-version} and \cref{thm:clique} hold?  For fixed $p > p_+$ is it the case that $\alpha_{r+1}(G_{n,p})$ is witnessed by a Tur\'an graph?  What happens for $p < p_-$?
\end{problem}

It is possible that new interesting phenomenon may become visible for smaller $p$ that is a function of $n$, and we leave this as an open question as well.

\textbf{Sharpness of \cref{thm:clique}.} We gather here two closing remarks regarding the distribution of $ X = \alpha_{r+1}( G_{n,1/2})$ for $r$ fixed.

\cref{thm:clique} is best possible in the sense that the result does not hold 
with intervals $I_n$ that are smaller. To see this, consider the
values of $n$ defined by
$$  n_{k,i} = \min \left\{n : \EE_n[Z_{k+1, \mu_{j_i}}] > 1 \right\} $$
for each $j_i \in \cJ$.
Note that we have $ b_{k,i} < n_{k,i} < c_{k,i}$.
Set $ \lambda = \EE_{n_k}[Z_{k+1, \mu_{j_i}}] $ and note that 
$\lambda = 1+o(1)$. Then by \cref{thm:poisson-version} we have
\begin{align*} \PP_{n_{k,i}}( X = kr + j_{i-1} ) &= \PP_{n_{k,i}}( Z_{k+1, \mu_{j_i}} < \xi_{j_{i-1}+1}) + O(1/\sqrt{ \log n}) = \sum_{j=0}^{\xi_{j_{i-1}+1} -1} e^{-\lambda} \frac{ \lambda^i}{i!} + O( 1/\sqrt{\log n})\\
&> \frac{1}{e} + o(1).\end{align*}
Similarly, for $ \ell \in \{ j_{i-1}+1, \dots, j_i - 1\}$ we have
\begin{align*} \PP_{n_{k,i}}(X= kr + \ell)&=\PP_{n_{k,i}}(X \ge kr+ \ell) - \PP_{n_{k,i}}(X \ge kr+  \ell+1)= \sum_{t=\xi_\ell}^{\xi_{\ell+1}-1} e^{-\lambda}\frac{\lambda^t}{t!} + O(1/\sqrt{\log n}) \\
&= \sum_{t=\xi_\ell}^{\xi_{\ell+1}-1} \frac{e^{-1}}{t!} + o(1).
\end{align*}
And finally we observe that
\begin{align*} \PP_{n_{k,i}}(X=kr + j_i) &= \PP_{n_{k,i}}( Z_{k+1, \mu_{j_i}} \ge\xi_{j_i} ) + O( 1/ \sqrt{ \log n})> 
e^{-\lambda}\frac{\lambda^{ \xi_{j_i}}}{\xi_{j_i}!} + O(1/\sqrt{\log n})\\
&= \frac{e^{-1}}{ \xi_{j_i}!} + o(1).
\end{align*}
Thus, $\alpha_{r+1}( G_{n_{k,i},1/2})$ takes all values in $\{ j_{i-1}, \dots, j_i\} $ with probabilities
bounded away from zero, and $ \alpha( G_{n,1/2})$ is not concentrated on any smaller interval.

As a second note, if $ \ell \notin \cJ$ then $ \PP( X \equiv \ell \mod r)$ is bounded away from 1 for
$n$ sufficiently large. Indeed, if $ j_{i-1} < \ell < j_i $ then we have 
\[ \PP(X=kr+\ell)=\PP(X \ge kr+ \ell) - \PP(X \ge kr+ \ell+1) = \sum_{t=\xi_\ell}^{\xi_{\ell+1}-1} e^{-\lambda}\frac{\lambda^t}{t!} + O(1/\sqrt{\log n})
\]
where $\lambda = \EE[ Z_{k+1, \mu_\ell}]$. As $\xi_\ell >0$ and $ \xi_{\ell +1} \le r$ 
this expression is bounded away from 1 for all $\lambda$.

\textbf{A hitting time result.}  The proof of \cref{thm:poisson-version} can be used to establish
a hitting time version of this result.  
Let $r$ be fixed and let $ 1 \le j \le r$. Define $T_1$ to the first
step $n$ in which we have $ X \ge kr+j$ where $ X = \alpha_{r+1}( G_{n,1/2})$, and
define $ T_2$ to be the first step $n$ at which we have $ Z_{k+1, \mu_j} \ge \xi_j$.
\begin{corollary}
With high probability we have $ T_1= T_2$.
\end{corollary}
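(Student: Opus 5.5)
We view the graph process as revealing vertices one at a time: $G_n$ is the induced subgraph of an infinite $G_{\mathbb{N},1/2}$ on $[n]$. Then both $X$ and $Z_{k+1,\mu_j}$-type quantities evolve as $n$ grows. The plan is to show that, on a single high-probability event defined over the whole relevant window of $n$, the deterministic implications used in the proofs of \cref{lem:new-lower} and \cref{lem:new-upper} hold simultaneously for every $n$ in the window. These implications are: $Z_{k+1,\mu_j}\ge\xi_j \Rightarrow X\ge kr+j$, and $Z_{k+1,\mu_j}<\xi_j \Rightarrow X<kr+j$. Then $T_1=T_2$ follows immediately.

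\textbf{Step 1: the window.} Fix $k$ and $j$ and restrict attention to the steps $n\in[a_k,a_{k+1})$. Since $a_{k+1}=(\sqrt2+o(1))a_k$, this window has $O(a_k)=n^{1+o(1)}$ steps. Whp $T_1$ and $T_2$ both lie in the window in which $\E[Z_{k+1,\mu_j}]$ passes from $(\log n)^{-C}$ to $(\log n)^{C}$. Before this window, a union bound over a geometric sequence of checkpoints $n$, together with monotonicity of $Z_{k+1,i}$-type counts \emph{via} their upper envelope (the number of light $(k+1)$-sets with at most $\mu_j$ edges only increases), shows $T_2$ has not occurred. Similarly, after this window it has surely occurred. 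By \cref{fact:numerical}, this critical window spans only $n\in[n_0,n_0(1+O(\log\log n/\log n))]$.

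\textbf{Step 2: uniform good events.} The bad events $\cB_1,\cB_2,\cB_3$ and $\cB_1',\dots,\cB_6'$ all have probability at most $n^{-c}$ for a fixed $c>0$. The exceptions are $\cB_4$ and $\cB_1'$, which are only polylogarithmic and are exactly the events that the hitting-time statement sidesteps. I would not union bound over all $n$ in the window. Instead I would use that each bad event is essentially monotone, or is witnessed by a bounded structure that persists once it appears, and then union bound at the final time $n_1$ of the window. For example, the event ``some light $(k+2)$-set exists at some time $\le n_1$'' equals $\cB_3'$ at time $n_1$. Likewise, the existence of two intersecting sets from $\cZ_{k+1,\le\mu+1}$ at any time implies the existence of such a configuration in $G_{n_1}$, and the same persistence argument handles $\cB_5'$ and $\cB_6'$. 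The far-from-partite event $\cB_2'$ and $\cB_4'$ have probability $e^{-\omega(\log n)}$ and $o(1/n)$ respectively, so a plain union bound over the $n^{1+o(1)}$ steps works for them. The covering events $\cB_2$ are not monotone in the right direction, but they only need to hold at the single time $T_2$. Conditionally on $T_2=n$ and the set achieving it, the remaining graph on the other vertices is still random, so the Chebyshev argument of \cref{lem:large-covers} applies at the stopping time.

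\textbf{Step 3: the two implications.} On the good event, at time $T_2$ we have $\xi_j$ disjoint sets in $\cZ_{k+1,\mu_j}$. The lower-bound construction also needs $j-\xi_j$ sets with $\mu_j+1$ defects; their absence ($\cB_4$) is the polylogarithmic obstruction. For $T_1\le T_2$ we need this at time $T_2$. Here I would use that when $Z_{k+1,\mu_j}$ first reaches $\xi_j$, its expectation is at least $(\log n)^{-C}$, so by \eqref{eq:Zratio} the count $Z_{k+1,\mu_j+1}$ has mean at least $(\log n)^{1-o(1)}$ uniformly over the window. A Poisson bound at $O(\log n)$ checkpoints, combined with monotonicity of ``having $j-\xi_j$ such sets'' (existing sets persist), then gives the required sets whp. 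For $T_1\ge T_2$, \cref{lem:new-upper}'s argument used $\cB_1'^c$, i.e.\ no $(k+1)$-set with fewer than $\mu_j$ defects. At any time before $T_2$, such sets may exist, but a set with fewer defects also has at most $\mu_j$ defects. So I would rerun the counting in \cref{lem:new-upper} with $Z_{k+1,\le\mu_j}<\xi_j$ replacing $Z_{k+1,\mu_j}<\xi_j$. Either the count of such sets is below $\xi_j$ and the inequality goes through (sets with fewer defects only help the count inequality in a way absorbed by the slack), or it is not. \emph{This is the main obstacle:} I need $T_2$ to be effectively defined via ``at most $\mu_j$'' defects, and I need whp that the first $(k+1)$-set with $<\mu_j$ defects appears strictly after $T_2$. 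That in turn follows from $\E[Z_{k+1,\mu_j-1}]\le k^{-2+o(1)}\E[Z_{k+1,\mu_j}]$ being $o(1)$ at time $T_2$, together with the same persistence/checkpoint argument.
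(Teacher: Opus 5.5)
\textbf{Gap: the covering event $\cB_2$ at the stopping time.} Your overall plan is essentially the paper's. You restrict to the window where $\E[Z_{k+1,\mu_j}]$ crosses its threshold. You use that $\cB_1',\dots,\cB_6'$ (and $\cB_1,\cB_3$) persist when vertices are added, so one bound at the end of the window covers every $n$. You obtain $\cB_4^c$ at the start of the window. The step that fails is your treatment of $\cB_2$. It is the one non-monotone event, and the only place where the paper has to do something new.

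Conditioning on $\{T_2=n\}$ means conditioning on $Z_{k+1,\mu_j}(G_{n-1})<\xi_j\le Z_{k+1,\mu_j}(G_n)$. This event is not monotone and depends on all edges of $G_n$. It also has probability at most about $\E[Z_{k+1,\mu_j}](k+1)/n=n^{-1+o(1)}$. Under this conditioning, the edges joining a defect $e$ to candidate covering $k$-sets are no longer independent fair coins, so the rest of the graph is not `still random'. The unconditional Chebyshev bound $n^{-1/99}$ from \cref{lem:large-covers} says nothing about a conditional probability given so rare an event. Moreover, at time $T_2$ the construction needs every defect of all $j$ of the $(k+1)$-sets to be well covered. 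All but one of these sets appeared before $T_2$, so conditioning on `the set achieving it' does not address them.

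\textbf{The missing idea: make the covering event monotone.} For a fixed edge $e$, the family of independent $k$-sets covering $e$ only grows as vertices are added. So it suffices to count covering $k$-sets inside the first $b=b_{k+1,j}$ vertices. For the greedy choice of $r-j$ covers you only need more than $(k+1)r$ such sets per defect, and they are pairwise disjoint by $\cB_3^c$. The event that some defect of some $S\in\cZ_{k+1,\mu_j}\cup\cZ_{k+1,\mu_j+1}$ has few covering $k$-sets inside $[b]\setminus S$ is then vertex-increasing. Hence one union bound at the end of the window controls it for every $n$ in the window, in particular at $T_2$. The independence you wanted comes from conditioning only on the edges inside $S$, which alone decide whether $S\in\cZ_{k+1,i}$, and not on $T_2$. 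This is exactly the paper's adjustment.

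\textbf{Smaller points.}
\begin{enumerate}
\item $Z_{k+1,i}$ is itself nondecreasing in the vertex process, so the checkpoint arguments in Steps 1 and 3 are unnecessary.
\item Your `main obstacle' disappears. $\cB_1'$ is vertex-increasing, and at the end of the window $\E[Z_{k+1,\mu_j-1}]=(\log n)^{C-2+o(1)}=o(1)$ whenever $C<2$ (for example your implicit $C=1$).
\item $\cB_4'$ is also vertex-increasing, and you should use this. As written, a bound of $o(1/n)$ summed over $n^{1+o(1)}$ steps is not $o(1)$.
\end{enumerate}
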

\begin{proof}
Recall that we define $ b_{k,j} = \min \{n : \EE[ Z_{k+1, \mu_j}] \ge \epsilon \}$
and $ c_{k,j} = \min \{n : \EE[ Z_{k+1, \mu_j}] \ge 1/\epsilon \}$ where $ \epsilon = o_k(1)$. First observe that $ \PP( T_2 \not\in[ b_{k+1,j}, c_{k+1,j}]) = o(1)$, and we can thus restrict our attention to $n$ in this interval.

Next observe that the events $ \cB_1', \dots, \cB_6'$ 
are vertex-increasing in the
sense that if $ G $ is in one of these events and we add a vertex to $G$ to 
form a graph $ G'$ then $ G'$ is in the event. We establish above that with high probability 
none of these events holds when $n = c_{k+1,j}$. Setting $ \cB' = \vee_{i=1}^6 \cB_i'$,
it follows that
with high probability $ \cB'^c$ holds for all $ n \in [ b_{k+1,r}, c_{k+1,r}]$. 
Following the proof of \cref{lem:new-lower}, 
we conclude that on the event $ \{ n < T_2 \} \wedge \cB'^c$ we have $ X < kr+j$, which implies
$ n < T_1$. So, with high probability $ T_1 \ge T_2$.

We make a similar argument to establish $ T_2 \ge T_1$ whp. 
The events $ \cB_1$ and $ \cB_3$ are vertex-increasing while
the event $ \cB_4$ is vertex-decreasing. The event $ \cB_3$, which is the event that some defect is not covered by
enough $k$-sets is neither vertex-increasing nor vertex-decreasing as defined above. Here we make one small adjustment
to the variable: We consider covering $k$-sets that are restricted to the first $b_{k+1,j}$ vertices of the graph. This does not
materially change any estimates for $ n \in[ b_{k+1,j}, c_{k+1, j}]$ and yields a 
vertex-increasing event. Letting $ \tilde\cB_3$ be this alternate version of the event 
$ \cB_3$ we conclude that with high probability $ \cB = \cB_1 \vee \cB_2 \vee \tilde\cB_3 \vee \cB_4 $ does not hold
for any $n \in [ b_{k+1,r}, c_{k+1,r}]$. Then, following the proof of \cref{lem:new-upper} we conclude that
on the event $ \{ n \ge T_1\} \wedge \cB^c$ we have $ n \ge T_2$. Thus $ T_2 \le T_1$ with high probability.
\end{proof}

\appendix
\section{Proof of \texorpdfstring{\cref{prop:EKR-BS}}{Proposition 18}} \label{sec:far}

\begin{proof}[Proof of \cref{prop:EKR-BS}]
   Let $\cF$ be the set of graphs on $[m]$ that are $m^{3/5}$-far from being $r$-partite, and recall that we are to show $|\cF| \le 2^{{\rm ex}(m, K_{r+1})}  e^{ - m (\log m)^3/2}$. We begin by gathering some definitions and facts from \cite{balogh2019efficient}.  
    For each $ G \in \cF$ we define $ t(G)$ to be the minimum number of
    edges that need to be deleted from $G$ to make the graph $r$-partite.
    We sort $\cF$ into a number of subsets.
    Define
    \[
    \cF_{\rm far} = \left\{ G \in \cF: t(G) \ge m^2/ ( 8 \log m)^{15}  \right\} 
    \ \ \ \text{ and } \ \ \ \cF_{\rm close} = \cF \setminus \cF_{\rm far}.  \]
    For each $ G \in \cF_{\rm close}$ we define $ \Pi(G)$ to be an arbitrary
    $r$-partition of $U$ whose parts contain $ t(G)$ edges of $G$. We
    set that a $r$-partition $\Pi$ is {\em balanced} if each part contains
    at least $m/2r$ vertices. Define
    \[ \cF_{\rm close}^u = \{ G \in \cF_{\rm close}: \Pi(G) \text{ is unbalanced}\}\]
    The following bounds are established\footnote{These bounds are corollaries of the proofs of Theorem~6.2 and Lemma~6.5, respectively, from \cite{balogh2019efficient}. The bounds we give are stronger than the bounds in the statements of Theorem~6.2 and Lemma~6.5, but the improvements can be directly read off of the final lines off the proofs in \cite{balogh2019efficient}.} in \cite{balogh2019efficient}: for $r \leq \log m / (121 \log\log m)$ we have 
    \begin{equation}
     | \cF_{\rm far} |  \le 2^{ {\rm ex}(m, K_{r+1})} e^{ - \Omega\left( m^{2 - 1 /(8r)}   \right)}\,, \qquad  
     | \cF_{\rm close}^u | \le 2^{ {\rm ex}(m, K_{r+1})} e^{ - \Omega\left( m^{2}/r^2   \right)}\,. \label{eq:cF-far-close-bds}
    \end{equation}

    Now let $ \cP_b$ be the set of balanced $r$-partitions of $ U$. For each $ \Pi \in \cP_b$ and $ m^{3/5} < t < m^2/( 8 \log m)^{15}$ let
    \[ \cF_{t, \Pi} = \{ G \in \cF_{\rm close}: t(G) = t \text { and } \Pi(G) = \Pi  \}. \]
    Applying \eqref{eq:cF-far-close-bds} gives 
    \begin{equation}
        |\cF| \le | \cF_{\rm far}| + | \cF_{\rm close}^u| + \sum_{ \Pi \in \cP_b} \sum_{t= m^{3/5}}^{ m^2/ (8 \log m)^{15}}| \cF_{t, \Pi}| 
    \le 2^{{ \rm ex}(m, K_{r+1}) - m^{3/2}} + r^m \max_{\Pi}\sum_{t= m^{3/5}}^{ m^2/ (8 \log m)^{15}}| \cF_{t, \Pi}| . \label{eq:cF-first-bd}
    \end{equation} 
    
    In order to bound $ \cF_{t, \Pi}$ we note, following Balogh and Samotij, 
    that  a graph with $t$ edges has
    either a vertex of degree $D$ or a matching that consists of $t/D$ edges.  
    We use the following two Lemmas
    to bound the number of graphs in $ \cF_{t, \Pi}$ in these two categories. Recall that $ K_\Pi$ is the complete multipartite graph with parts given by $\Pi$.
    \begin{lemma}[Lemma~6.8 of \cite{balogh2019efficient}] \label{lem:star}
    Let $ D$ be an integer satisfying $D \ge 2^rr$. Suppose that $ \Pi$ is an $r$-partition of $ U$ and that $S$ is a copy of $ K_{1,D}$ with $ V(S)$ contained in some part $P$ of $ \Pi$. If $v \in P$ is the center  vertex of $S$ then
    \[ \left| \left\{ G \subseteq K_\Pi: G \cup S \not\supseteq K_{r+1} \text{ and }
    {\rm deg}_G(v,Q) \ge D \text{ for all } Q \in \Pi \setminus \{P\}
    \right\} \right| \le 2^{ e( K_\Pi) - \frac{D^2}{8r^2}} \]
    \end{lemma}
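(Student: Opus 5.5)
The plan is to expose the edges of $G$ in stages and reduce the claim to a probability bound. Write $u_1,\dots,u_D \in P$ for the leaves of $S$. Since $G\subseteq K_\Pi$, we have $|\{G\subseteq K_\Pi\}| = 2^{e(K_\Pi)}$, so it suffices to show that a uniformly random $G\subseteq K_\Pi$ satisfies both conditions with probability at most $2^{-D^2/(8r^2)}$.

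\textbf{Step 1: reduce to a fixed configuration.} First expose all edges at $v$. On the event $\deg_G(v,Q)\ge D$ for every $Q\in\Pi\setminus\{P\}$, fix (measurably in the edges at $v$) sets $W_Q\subseteq N_G(v)\cap Q$ with $|W_Q| = D$. The edges between $\{u_1,\dots,u_D\}$ and $W:=\bigcup_Q W_Q$, and the edges between distinct $W_Q$'s, are disjoint from the edges at $v$. Hence, conditionally, they are still independent fair coins.

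Now suppose some $u_i$ and some transversal $(w_Q)_{Q\neq P}$, with $w_Q\in W_Q$, satisfy: $u_i$ is $G$-adjacent to every $w_Q$, and the $w_Q$ span a clique in $G$. Then $v$, $u_i$ and the $w_Q$ form a $K_{r+1}$ in $G\cup S$, because $vu_i\in S$ and $v$ is adjacent to all of $W$. So it is enough to bound the probability that no such $(u_i,(w_Q))$ exists by $2^{-D^2/(8r^2)}$. Let $H$ denote the $(r-1)$-partite graph of $G$ on the parts $W_Q$, and write $T_i=N_G(u_i)\cap W$. The bad event is then: for every $i$, $H[T_i]$ has no transversal $K_{r-1}$.

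\textbf{Step 2: split according to the degrees of the $u_i$.} We consider two cases.

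\emph{Case (a): every $u_i$ is deficient.} Here every $u_i$ has some $Q$ with $|T_i\cap W_Q| < D/4$. For a fixed $i$, a union bound over $Q$ together with Chernoff gives probability at most $(r-1)e^{-D/8}$. The sets $T_i$ are independent across $i$, so this case has probability at most $(re^{-D/8})^D$. Since $D\ge 2^r r$ gives $\log r\le D/16$, this is at most $e^{-D^2/16}$, which is well below $2^{-D^2/(8r^2)}$.

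\emph{Case (b): some $u_i$ is rich.} Here some $u_i$ has $|T_i\cap W_Q|\ge D/4$ for all $Q$. Take a union bound over $i$, which costs a factor $D$. Then condition on the $u_i$-edges, which are independent of $H$. What remains is to show that a random $(r-1)$-partite graph $H$ with parts of size $s\ge D/4$ contains no transversal $K_{r-1}$ with probability at most $2^{-cs^2}$, with $c$ large enough to give $2^{-D^2/(8r^2)}$ after absorbing the factor $D$.

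\textbf{Step 3: the clique-free bound for $H$.} I would prove this bound by induction on the number of parts, reusing the structure of Steps 1–2. Pick one part $W_1$. Either many vertices of $W_1$ have degree less than $s/4$ into some other part, which is a Chernoff-type event of probability $2^{-\Omega(s^2)}$ across independent vertices. Or some vertex $x\in W_1$ has at least $s/4$ neighbours in every other part. In the latter case, the common neighbourhoods of $x$ must contain no transversal $K_{r-2}$, and we apply induction to parts of size $s/4$. This loses a factor $4$ in $s$ at each level, which is too lossy over $r$ levels.

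\textbf{Main obstacle.} The hard step is getting the constant $1/(8r^2)$ rather than something like $4^{-r}$. I would first try to avoid the naive recursion by using Janson's inequality (\cref{thm:janson}), exactly as in \cref{lem:multipartite}. Apply it to the transversal $K_{r-1}$'s in parts of size $s=D/4$. The expected count is $\mu = s^{r-1}2^{-\binom{r-1}{2}}$, and the correlation term is $\Delta = r^{O(1)}\mu^2 s^{-2}2^{O(r)}$. Janson's inequality then gives $e^{-\Omega(s^2 2^{-O(r)}/r^{O(1)})}$. Here the hypothesis $D\ge 2^r r$ would be used to control the $2^{O(r)}$ losses. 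If even this proves insufficient, I would fall back on following Balogh–Samotij's original proof of their Lemma~6.8 more closely.
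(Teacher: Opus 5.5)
The paper gives no proof of this lemma (it is quoted from Balogh--Samotij), so your argument has to close on its own. As written it does not reach the stated constant.

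Your Step~1 is correct, and it is in fact the only reduction needed. After exposing the edges at $v$, the leaves $U_0=\{u_1,\dots,u_D\}\subseteq P$ and the sets $W_Q$ are $r$ disjoint $D$-sets in distinct parts of $\Pi$. The $\binom{r}{2}D^2$ pairs between them avoid $v$, so they are still independent fair coins. The bad event forces the $r$-partite graph they span to contain no transversal $K_r$.

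The gap is in Step~2. Passing to a rich $u_i$ and to parts of size $s=D/4$ costs a factor $16$ in the exponent, and with Janson's inequality this loss cannot be recovered. Even a sharp Janson computation for transversal $K_{r-1}$'s with parts of size $s$ gives only about $\exp(-s^2/(2(r-1)(r-2)))$, i.e.\ roughly $2^{-D^2/(22(r-1)(r-2))}$. This is weaker than $2^{-D^2/(8r^2)}$ for every $r\ge4$: at $r=4$ it is about $2^{-D^2/133}$ against $2^{-D^2/128}$, before the extra union-bound factor $D$.

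Your written estimate $e^{-\Omega(s^2 2^{-O(r)}/r^{O(1)})}$ is weaker still. The hypothesis $D\ge 2^r r$ cannot ``control'' a factor $2^{-O(r)}$ in the exponent, because both that bound and the target exponent are proportional to $D^2$. You already concede Step~3 is too lossy. So your route proves at best $2^{-cD^2/r^2}$ for some smaller $c$, and the proposal ends with a fallback rather than an argument.

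The fix is to drop the case analysis. Apply Janson's inequality, exactly as in the proof of \cref{lem:multipartite}, to transversal copies of $K_r$ in the $r$-partite graph with parts $U_0,W_{Q_1},\dots,W_{Q_{r-1}}$, all of size $D$. For $r\ge 3$ you have $\mu=D^r2^{-\binom{r}{2}}$ and
\[ \Delta\le \mu^2\sum_{j=2}^{r-1}\binom{r}{j}2^{\binom{j}{2}}D^{-j}. \]
The role of $D\ge 2^r r$ is to make consecutive terms shrink by the factor $\frac{r-j}{j+1}2^jD^{-1}\le \frac{r2^{r-2}}{3D}\le\frac{1}{12}$. So the $j=2$ term dominates and $\Delta\le\frac{12}{11}\mu^2 r(r-1)/D^2$, with no factor exponential in $r$. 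The same hypothesis gives $\mu/D^2\ge r^{r-2}2^{r(r-3)/2}\ge 3$. Hence $\mu+\Delta\le\frac{12}{11}\mu^2r^2/D^2$, and
\[ \PP(\text{no transversal }K_r)\le\exp\Bigl(-\frac{\mu^2}{2(\mu+\Delta)}\Bigr)\le e^{-11D^2/(24r^2)}\le 2^{-D^2/(8r^2)}. \]
For $r=2$ the bipartite graph between $U_0$ and $W_Q$ must be empty, which has probability $2^{-D^2}$. Averaging over the edges at $v$ and multiplying by $2^{e(K_\Pi)}$ gives the lemma.
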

    \begin{lemma}[Lemma~6.9 of \cite{balogh2019efficient}] \label{lem:matching}
    Suppose that  $\Pi$ is a balanced $r$-partition of $U$ and that $M$ is a matching with $s$ edges such that $ V(M) \subseteq P$ for some $P \in \Pi$. If $ r^2 2^{r+3} \le m$ then
    \[ \left| \left\{ G \subseteq K_\Pi: G \cup M \not\supseteq K_{r+1}  \right\} \right| 
    \le 2^{ e(K_\Pi) - \frac{sm}{2^{10}r^4}}
    \]
    \end{lemma}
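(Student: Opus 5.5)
The plan is to recast the count as a probability. Let $G$ be a uniformly random subgraph of $K_\Pi$, so each edge of $K_\Pi$ is present independently with probability $1/2$. Then
\[
\left| \left\{ G \subseteq K_\Pi: G \cup M \not\supseteq K_{r+1} \right\} \right| = 2^{e(K_\Pi)}\,\PP(G\cup M \not\supseteq K_{r+1}),
\]
and it suffices to show $\PP(G\cup M\not\supseteq K_{r+1}) \le 2^{-sm/(2^{10}r^4)}$. This probability is at most the probability that none of a specific family of $K_{r+1}$'s appears. Write $P=Q_0$ and let $Q_1,\dots,Q_{r-1}$ be the other parts. For each matching edge $e=uv\in M$ and each transversal $(w_1,\dots,w_{r-1})\in Q_1\times\dots\times Q_{r-1}$, let $B_{e,w}$ be the set of $2(r-1)+\binom{r-1}{2}$ edges of $K_\Pi$ joining $u,v,w_1,\dots,w_{r-1}$ other than $uv$. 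Let $A_{e,w}$ be the event that all of $B_{e,w}$ is present. This is an increasing event, so its complement is decreasing, and the number of events that occur is a sum of indicators of this kind. Hence Janson's inequality (\cref{thm:janson}, in its classical form $\PP(\text{no }A_{e,w}) \le \exp(-\mu^2/(2\Delta))$ when $\Delta \ge \mu$, and $e^{-\mu/2}$ otherwise) applies.

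\textbf{Step 1: first moment.} Balancedness gives $|Q_i|\ge m/(2r)$, so
\[
\mu = \sum_{e,w}\PP(A_{e,w}) \ge s\Big(\frac{m}{2r}\Big)^{r-1} 2^{-\binom{r+1}{2}+1}.
\]

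\textbf{Step 2: bounding $\Delta$.} Two events are dependent only if $B_{e,w}\cap B_{e',w'}\ne\emptyset$. Since $M$ is a matching and $uv$ itself is not a random edge, this happens in exactly two ways.

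In the first case $e=e'$, and the transversals share $j\ge 1$ vertices. The contribution is at most
\[
s\binom{r-1}{j}\Big(\frac{m}{r}\Big)^{2(r-1)-j} 2^{-2\binom{r+1}{2}+2+\binom{j+2}{2}}.
\]
Relative to $\mu^2$, this gives $\mu^2/\Delta_j \gtrsim s\,(m/(2r))^j\,2^{-\binom{j+2}{2}}\binom{r-1}{j}^{-1}$.

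In the second case $e\neq e'$, and the transversals share $j\ge 2$ vertices, so that they share at least one edge among the $w$'s. The analogous computation gives $\mu^2/\Delta'_j \gtrsim (m/(2r))^{j}\,2^{-\binom{j}{2}}\binom{r-1}{j}^{-1}$. There is no factor $s^{-1}$ here, because both $e$ and $e'$ are summed over.

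\textbf{Step 3: the main obstacle.} The key point is to verify that every term satisfies $\mu^2/\Delta_j \ge s\,m/(2^{c}r^{4})$, uniformly in $j\le r-1$. The danger is the large-$j$ terms, where $2^{-\binom{j+2}{2}}$ competes with $(m/2r)^j$. This is exactly where the hypothesis $m\ge r^2 2^{r+3}$ enters: it gives $m/(2r)\ge r2^{r+2}\ge 2^{(j+3)/2}\cdot r$ for all $j\le r-1$. Consequently $(m/2r)^j 2^{-\binom{j+2}{2}}\ge r^{j}\,2^{j}\cdot 2^{-1}$, which absorbs $\binom{r-1}{j}\le r^j$ and leaves at least $m/(16 r)$ from the $j=1$ term. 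For the second case, the factor $(m/2r)^2$ alone is at least $s\cdot m/(2r)^2$, since $s\le m$.

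Summing over the at most $2r$ values of $j$ (a loss of a factor $r$ or $r^2$), I expect $\Delta\le 2^{8}r^{3}\mu^2/(sm)$ whenever $\Delta\ge\mu$. Janson then gives $\PP\le\exp(-sm/(2^{9}r^3))$. If instead $\Delta<\mu$, the bound $e^{-\mu/2}$ is even smaller, since $\mu\ge s\,m/r$ by the same hypothesis on $m$. Either way the result is at most $2^{-sm/(2^{10}r^4)}$, with room to spare in the powers of $r$ and $2$. I would carry out the bookkeeping of the constants in Step 3 carefully; the rest of the argument is routine.
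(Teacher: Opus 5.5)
The paper quotes this lemma from Balogh--Samotij without proof, so your argument can only be judged on its own terms. Its architecture is sound. The following parts are correct:
\begin{itemize}
\item the reduction to a uniformly random $G\subseteq K_\Pi$;
\item the events $A_{e,w}$ and the lower bound on $\mu$;
\item the classification of dependent pairs ($e=e'$ with $j\ge 1$ shared coordinates; $e\ne e'$ with $j\ge 2$);
\item the two ratios you state for $\mu^2/\Delta_j$.
\end{itemize}
For the ratios, compute $\Delta_j/\mu^2$ directly: the part sizes $q_i=|Q_i|$ cancel except for $\sum_{|J|=j}\prod_{i\in J}q_i^{-1}\le\binom{r-1}{j}(2r/m)^j$. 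Your display with $(m/r)^{2(r-1)-j}$ is not a valid bound, since a part of a balanced partition can have about $m/2$ vertices.

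The gap is in Step 3, which as written does not reach the exponent $sm/r^4$. The $e\ne e'$, $j=2$ term is $\Delta'_2/\mu^2\approx 2\binom{r-1}{2}(2r/m)^2\approx 4r^4/m^2$. Moreover $s$ can be of order $m$: take $|P|\approx m/2$, the other parts of size about $m/(2r)$, and $M$ nearly perfect in $P$. So this single term is $\Theta(r^4/(sm))$, which has three consequences:
\begin{itemize}
\item your claimed $\Delta\le 2^8r^3\mu^2/(sm)$ is false for large $r$;
\item there is no slack in the power of $r$, only in the constant;
\item a uniform per-$j$ bound summed over up to $2r$ values of $j$ loses a factor $r$ and proves only $2^{-\Omega(sm/r^5)}$.
\end{itemize}

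What is needed is geometric decay in $j$, and this is the real role of $m\ge r^22^{r+3}$. Set $T_j=\binom{r-1}{j}2^{\binom{j+2}{2}-1}(2r/m)^j$ and $T'_j=\binom{r-1}{j}2^{\binom{j}{2}}(2r/m)^j$. Then $\Delta_j/\mu^2\le T_j/s$ and $\Delta'_j/\mu^2\le T'_j$. Moreover
\begin{itemize}
\item $T_{j+1}/T_j=\frac{r-1-j}{j+1}2^{j+2}\frac{2r}{m}\le\frac18$ for $1\le j\le r-3$;
\item $T'_{j+1}/T'_j=\frac{r-1-j}{j+1}2^{j}\frac{2r}{m}\le\frac18$ for $2\le j\le r-2$.
\end{itemize}
Hence $\Delta/\mu^2\le\frac87(T_1/s+T'_2)\le\frac{10r^2}{sm}+\frac{5r^4}{m^2}\le\frac{5r^4}{sm}$, using $s\le m/2$ in the last step. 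Combine this with $\mu\ge sm/16$; your $\mu\ge sm/r$ fails at $r=2$, but $sm/16$ holds for all $r\ge 2$. Janson then gives
\[
\exp\bigl(-sm/(12r^4)\bigr)\le 2^{-sm/(2^{10}r^4)}.
\]

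The intermediate chain in Step 3 also needs repair.
\begin{itemize}
\item The bound $m/(2r)\ge 2^{(j+3)/2}r$ yields only $(m/2r)^j2^{-\binom{j+2}{2}}\ge r^j/2$, not $r^j2^{j}/2$.
\item To keep a factor of $m$ in the final bound, you must set one factor $m/(2r)$ aside before absorbing the others.
\item The $j=1$ term gives $\mu^2/\Delta_1$ of order $sm/r^2$, not $m/(16r)$.
\end{itemize}
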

    \noindent
    We apply these Lemmas with $ D = (mt)^{1/3} $. A graph in $\cF_{t, \Pi}$ has either a vertex of degree $D$ in its own part, or else a matching of size at least $t/D$ whose edges all lie within the parts; some part has at least $s=t/Dr$ of these edges. Note that the additional condition on
    the degree of $v$ required for Lemma~\ref{lem:star} follows from the choice of $\Pi$ as a minimizer of the parameter $t(G)$. Indeed, if $v$ had fewer than $D$ neighbors in some other part $Q$, then we could move $v$ to $Q$ to make the graph closer to being $r$-partite. Note further, that Lemma~\ref{lem:star}~and~\ref{lem:matching} do not specify all edges in the graphs in question and so we have to take this into account in our estimate. Observe that for $t$ in the given range we have $ (mt)^{2/3}  > t ( \log m)^2, m(\log m)^4$. Consequently, 
    \[ | \cF_{t, \Pi}| \le 2^{ e( K_\Pi)} \left[ (m^2)^t 2^{- \frac{ D^2}{8r^2}} + (m^2)^t 2^{- \frac{ tm}{2^{10} r^5 D}}\right] \le 2^{ {\rm ex}(m, K_{r+1}) - m (\log m)^3}\,.\]
    Combining with \eqref{eq:cF-first-bd} completes the proof.
\end{proof}

\bibliography{bib.bib}
\bibliographystyle{abbrv}

\end{document}